\newcommand{\U}{\mathscr{U}}
\newcommand{\G}{\mathcal{G}}
\renewcommand{\P}{{\mathscr P}}
\newcommand{\e}{\varepsilon}
\renewcommand{\d}{\delta}
\newcommand{\R}{\mathbb R}
\newcommand{\eqdef}{\stackrel{\mathrm{def}}{=}}
\newcommand{\conv}{\mathrm{conv}}
\renewcommand{\le}{\leqslant}
\renewcommand{\ge}{\geqslant}
\renewcommand{\leq}{\leqslant}
\newcommand{\B}{\mathfrak{B}}
\newtheorem{theorem}{Theorem}[section]
\newtheorem{lemma}[theorem]{Lemma}
\newtheorem{corollary}[theorem]{Corollary}
\theoremstyle{remark}
\newtheorem{remark}[theorem]{Remark}
\newtheorem{question}[theorem]{Question}
\theoremstyle{definition}
\newtheorem{definition}[theorem]{Definition}
\newcommand{\A}{\mathscr A}
\DeclareMathOperator*{\E}{\mathbb{ E}}
\DeclareMathOperator{\greenmat}{\mathcal{G}}
\newcommand{\F}{\mathcal F}
\newcommand{\N}{\mathbb N}
\renewcommand{\setminus}{\smallsetminus}
\newcommand{\Lip}{\mathrm{Lip}}
\DeclareMathOperator{\diam}{diam}
\begin{document}

\title[Spectral calculus and Lipschitz extension]{Spectral calculus and Lipschitz extension for barycentric metric spaces}

\author{Manor Mendel}
\address {Mathematics and Computer Science Department,
The Open University of Israel, 1 University Road, P.O. Box 808, Raanana 43107,Israel }
\email{manorme@openu.ac.il}.
\author{Assaf Naor}
\address{Courant Institute of Mathematical Sciences, New York University, 251 Mercer Street, New York NY 10012, USA}
\email{naor@cims.nyu.edu}


\begin{abstract}
The metric Markov cotype of barycentric metric spaces is computed, yielding the first class of metric spaces that are not Banach spaces for which this bi-Lipschitz invariant is understood. It is shown that this leads to new nonlinear spectral calculus inequalities, as well as a unified framework for Lipschitz extension, including new Lipschitz extension results for $CAT(0)$ targets.  An example  that elucidates the relation between metric Markov cotype and Rademacher cotype is analyzed, showing that a classical Lipschitz extension theorem of Johnson, Lindenstrauss and Benyamini is asymptotically sharp.
\end{abstract}



\dedicatory{Dedicated to Nigel Kalton}

\maketitle


\section{Introduction}

Our main purpose here is to compute a bi-Lipschitz invariant, called {\em metric Markov cotype}, for {\em barycentric metric spaces}; an important class of metric spaces that contains all uniformly convex Banach spaces as well as complete simply connected metric spaces that are nonpositively curved in the sense of Aleksandrov.

The notion of metric Markov cotype arises from the deep work~\cite{Ball} of K. Ball on the Lipschitz extension problem. Based mainly on Ball's ideas in~\cite{Ball}, combined with some additional geometric ingredients, we establish a fully nonlinear version of Ball's extension theorem
that allows for targets that are not necessarily Banach spaces. Due to our computation of metric Markov cotype for barycentric spaces, this yields a versatile Lipschitz extension theorem that contains as special cases many Lipschitz extension theorems that appeared in the literature, as well as Lipschitz extension results that were previously unknown.

Another use of metric Markov cotype is due to~\cite{MN-towards}, where it is shown to yield spectral calculus inequalities for nonlinear spectral gaps. Consequently, our computation of metric Markov cotype for barycentric metric spaces implies new nonlinear spectral calculus inequalities which, in the special case of $CAT(0)$ spaces, lay the groundwork for our forthcoming construction~\cite{MN12-random} of expanders with respect to certain Hadamard spaces and random graphs.

Finally, we show that a beautiful construction of
Kalton~\cite{Kal12} yields a closed linear subspace $X$ of $L_1$
(thus in particular $X$ has Rademacher cotype $2$) that fails to
have finite metric Markov cotype. By obtaining a quantitative
version of Kalton's result, we show that a classical Lipschitz
extension theorem of Johnson, Lindenstrauss and
Benyamini~\cite{JL84} is asymptotically sharp.

In order to give precise formulations of the above results one needs
to recall some background. This will be done in the subsequent
sections that contain a detailed description of the contents of this
paper.

\subsection{Markov type and metric Markov cotype}

Given $n\in \N$ and $\pi\in \Delta^{n-1}\eqdef \left\{x\in [0,1]^n:\ \sum_{i=1}^n x_i=1\right\}$, recall that a stochastic matrix $A=(a_{ij})\in M_n(\R)$ (here and in what follows, $M_n(\R)$ denotes as usual the $n$ by $n$ matrices with real entries) is said to be reversible relative to the probability vector $\pi$ if $\pi_i a_{ij}=\pi_j a_{ji}$ for all $i,j\in \{1,\ldots,n\}$. The following important definition is due to K. Ball~\cite{Ball}.
\begin{definition}[Markov type $p$]\label{def:Mtype}
A metric space $(X,d_X)$ is a said to have Markov type $p\in (0,\infty)$ with constant $M\in (0,\infty)$ if  for every $n,t\in
\N$ and every $\pi\in \Delta^{n-1}$, if $A=(a_{ij})\in M_n(\R)$ is  a stochastic matrix that is reversible relative to $\pi$ then every $x_1,\ldots,x_n\in X$ satisfy
\begin{equation}\label{eq:to reverse type}
\sum_{i=1}^n\sum_{j=1}^n \pi_i(A^t)_{ij}d_X(x_i,x_j)^p\le M^pt\sum_{i=1}^n\sum_{j=1}^n \pi_ia_{ij}d_X(x_i,x_j)^p.
\end{equation}
The infimum over those $M\in (0,\infty)$ satisfying~\eqref{eq:to reverse type} is denoted $M_p(X)$.
\end{definition}
The triangle inequality implies that $M_1(X)=1$ for every metric space $(X,d_X)$, and Ball proved in~\cite{Ball} that $M_p(\ell_p)=1$ for $p\in [1,2]$. In~\cite{NPSS06} it is shown that $M_2(\ell_p)\lesssim \sqrt{p}$ for $p\in [2,\infty)$ (here, and in what follows, $A \lesssim B$ and $B \gtrsim A$ denotes the estimate $A \leq CB$ for some absolute constant $C\in (0,\infty)$). Additional examples of computations of Markov type will be discussed in Section~\ref{sec:ext intro}.

Markov type is a bi-Lipschitz invariant that has proved itself useful to a variety problems in metric geometry, one of which will be recalled below. We refer to~\cite{Ball} for the natural probabilistic interpretation of~\eqref{eq:to reverse type} that explains the above terminology (this interpretation is not needed in the present paper, but it is important elsewhere).

\begin{definition}[metric Markov cotype $p$]\label{def:Mcotype} A metric space $(X,d_X)$ is said to have metric Markov cotype $p\in (0,\infty)$ with constant $N\in (0,\infty)$ if  for every $n,t\in
\N$ and every $\pi\in \Delta^{n-1}$, if $A=(a_{ij})\in M_n(\R)$ is  a stochastic matrix that is reversible relative to $\pi$ then for every $x_1,\ldots,x_n\in X$ there exist $y_1,\ldots,y_n\in X$
satisfying
\begin{multline}\label{def:markov cotype}
\sum_{i=1}^n \pi_id_X(x_i,y_i)^p+t\sum_{i=1}^n\sum_{j=1}^n \pi_ia_{ij}
d_X(y_i,y_j)^p\\\le N^p\sum_{i=1}^n\sum_{j=1}^n
\pi_i\left(\frac{1}{t}\sum_{s=1}^{t}A^s\right)_{ij}d_X(x_i,x_j)^p.
\end{multline}
The infimum over those $N\in (0,\infty)$ satisfying~\eqref{eq:to
reverse type} is denoted $N_p(X)$.
\end{definition}

Definition~\ref{def:Mcotype} is taken from~\cite{MN-towards}.
In~\cite{Ball} Ball suggested a seemingly different notion of Markov
cotype, but it is in fact equivalent to
Definition~\ref{def:Mcotype}, as explained in
Section~\ref{sec:ball's cotype}. Due to applications
of~\eqref{def:markov cotype} that will be described later, we
believe that it is beneficial to work with the above definition of
metric Markov cotype rather than Ball's original formulation. See
Section~\ref{sec:ball's cotype} for a description of Ball's
approach.

Condition~\eqref{def:markov cotype} originates from an attempt to introduce an invariant that is ``dual" to Markov type by reversing the inequality in~\eqref{eq:to reverse type}. However, no non-singleton metric space can satisfy~\eqref{eq:to reverse type} with the direction of the inequality reversed (this follows formally from observations in~\cite{NS02} and~\cite{MN-cotype}, and can be also easily verified directly). \eqref{def:markov cotype} achieves a similar reversal of~\eqref{eq:to reverse type} by allowing one to pass from the initial points
$x_1,\ldots,x_n\in X$ to new points $y_1,\ldots,y_n\in X$. The
first summand in the left hand side of~\eqref{def:markov cotype}
ensures that on average (with respect to $\pi$) $y_i$ is close to $x_i$. The remaining terms
in~\eqref{def:markov cotype} correspond to the reversal
of~\eqref{eq:to reverse type}, with $\{x_i\}_{i=1}^n$ replaced by
$\{y_i\}_{i=1}^n$  in the left hand side, and the power $A^t$
replaced by the Ces\`aro average
$\frac{1}{t}\sum_{s=1}^{t}
A^{s}$.

Due to~\cite{Ball,MN-towards}, Banach spaces that admit an equivalent norm whose modulus of convexity has power type $p$ have metric Markov cotype $p$, in particular $N_p(\ell_p)\lesssim 1$ for $p\in [2,\infty)$ and $N_2(\ell_p)\lesssim 1/\sqrt{p-1}$ for $p\in (1,2]$. Prior to the present work this was the only nontrivial class of metric spaces whose metric Markov cotype was known. Here we enrich the repertoire of metric spaces for which one can prove a metric Markov cotype inequality such as~\eqref{def:markov cotype}, treating also spaces that are not necessarily Banach spaces.

\subsection{Barycentric metric spaces}
In order to avoid measurability considerations that are irrelevant to the discussion at hand, we will tacitly assume throughout this article that all measures are finitely supported and all $\sigma$-algebras are finite.

The set of probability measures on a set $X$ is denoted $\mathscr{P}_X$. Denoting the point mass at $x\in X$ by $\delta_x\in \P_X$, every $\mu\in \P_X$ can be written uniquely as $\mu=\sum_{i=1}^n\lambda_i\d_{x_i}$ for some $n\in \N$, distinct $x_1,\ldots,x_n\in X$ and $(\lambda_1,\ldots,\lambda_n)\in \Delta^{n-1}\cap (0,1]^n$. A coupling of $\mu,\nu\in \P_X$ is a measure $\pi\in \P_{X\times X}$ such that $\sum_{z\in X}\pi(x,z)=\mu(x)$ and $\sum_{z\in X}\pi(z,y)=\nu(y)$ for every $x,y\in X$ (both of these sums are finite). The set of all the couplings of $\mu$ and $\nu$ is denoted $\Pi(\mu,\nu)\subseteq \P_{X\times X}$. If $(X,d_X)$ is a metric space and $p\in [1,\infty)$ then the corresponding Wasserstein $p$ metric on $\P_X$ is defined as usual by
$$
\forall\, \mu,\nu\in \P_X,\qquad W_p(\mu,\nu)\eqdef \inf_{\pi\in \Pi(\mu,\nu)} \left(\int_{X\times X} d_X(x,y)^pd\pi(x,y)\right)^{1/p}.
$$
\begin{definition}[$W_p$ barycentric metric space]\label{def:Wp}
Fix $p,\Gamma\in [1,\infty)$. A metric space $(X,d_X)$ is said to be $W_p$ barycentric with constant $\Gamma$ if there exists a mapping $\B:\P_X\to X$ satisfying $\B(\d_x)=x$ for every $x\in X$, and
\begin{equation}\label{eq:Gamma Wp Lip}
\forall\, \mu,\nu\in \P_X,\qquad d_X(\B(\mu),\B(\nu))\le \Gamma W_p(\mu,\nu).
\end{equation}
$(X,d_X)$ is said to be $W_p$ barycentric if it is $W_p$ barycentric with constant $\Gamma$ for some $\Gamma\in [1,\infty)$.
\end{definition}

In what follows, a mapping $\B:\P_X\to X$ satisfying $\B(\d_x)=x$ for every $x\in X$ will be called a {\em barycenter map}.
The notion of $W_p$ barycentric metric spaces was studied  by several authors: see e.g. \cite{LPS-cat0-ext,EH99,Stu03,Gro03,Oht09,Nav12}. Note that if $(X,d_X)$ is $W_p$ barycentric with constant $\Gamma$ then it is also $W_q$ barycentric with constant $\Gamma$ for every $q\ge p$. Normed spaces are $W_1$ barycentric with constant $\Gamma=1$, as exhibited by the barycenter map $\B(\mu)=\int_X xd\mu(x)$. Metric spaces that are nonpositively curved in the sense of Busemann (see~\cite{BH-book}) are also $W_1$ barycentric with constant $\Gamma=1$, as shown in~\cite{EH99,Nav12}.

\begin{definition}[$p$-barycentric metric space]\label{def:p}  Fix $p,K\in [1,\infty)$. A metric space $(X,d_X)$ is said to be $p$-barycentric with constant $K$ if there exists a
 mapping $\B:\P_X\to X$ such that for every $x\in X$ and $\mu\in \P_X$ we have
\begin{equation}\label{eq:def p bary}
d_X\left(x,\B(\mu)\right)^p+\frac{1}{K^p}\int_X d_X(\B(\mu),y)^pd\mu(y)\le \int_X d_X(x,y)^pd\mu(y).
\end{equation}
$(X,d_X)$ is said to be $p$-barycentric if it is $p$-barycentric with constant $K$ for some $K\in [1,\infty)$.
\end{definition}
The appearance of the constant $K$ in the left hand side of~\eqref{eq:def p bary} is natural from the point of view of Banach space theory (see~\cite[Lem~3.1]{Ball} and~\cite[Lem.~6.5]{MN-towards}), but note that it means that (unless $K=1$) $\B(\mu)$ need not be a point $x\in X$ which minimizes the right hand side of~\eqref{eq:def p bary}. In many cases of interest one defines barycenters as minimizers of the right hand side of~\eqref{eq:def p bary}, but as will become clear from the ensuing considerations, Definition~\ref{def:p} suffices for many purposes.

In what follows, whenever we say that a metric space is $W_p$ barycentric with constant $\Gamma$ and also $q$-barycentric with constant $K$ we mean that Definition~\ref{def:Wp} and Definition~\ref{def:p} are satisfied with respect to {\em the same} barycenter map $\B:\P_X\to X$. Examples of such spaces include all complete $CAT(0)$ metric spaces (Hadamard spaces), which are $W_1$ barycentric with constant $1$ (see~\cite[Lem.~4.2]{LPS-cat0-ext} or~\cite[Thm.~6.3]{Stu03})  and also $2$-barycentric with constant $1$ (see~\cite[Lem.~4.1]{LPS-cat0-ext} or~\cite[Lem.~4.4]{Stu03}). Banach spaces whose modulus of uniform convexity have power type $p\in [2,\infty)$ are $W_1$ barycentric with constant $1$ and also $p$-barycentric (see~\cite[Lem~3.1]{Ball} for $p=2$ and~\cite[Lem.~6.5]{MN-towards} for $p\in [2,\infty)$).

We refer to the books~\cite{Bal95,Jos97,BH-book} for an extensive
discussion of the important class of $CAT(0)$ metric spaces, which
includes e.g. complete simply connected Riemannian manifolds with
nonpositive sectional curvature and Euclidean Tits buildings. For
the sake of readers who are not familiar with this notion we state
that the definition of the class of $CAT(0)$ metric spaces can be
taken to be those metric spaces $(X,d_X)$ for which there exists a
mapping $\B:\P_X\to X$ that satisfies~\eqref{eq:def p bary} with
$p=2$ and $K=1$ for probability measures $\mu$ that are supported on
at most two points~\cite[Thm.~4.9]{Stu03}. Readers who are not
familiar with the theory of uniformly convex Banach spaces are
referred to~\cite{Fiegel76,BCL}.


\subsection{Metric Markov cotype for barycentric metric spaces}

In Section~\ref{sec:cotype proof} we prove the following result.

\begin{theorem}\label{thm:bary-cotype} Fix $p,K,\Gamma\in [1,\infty)$. Suppose that $(X,d_X)$ is a metric space that is $W_p$ barycentric with constant $\Gamma$ and also $p$-barycentric with constant $K$. Then $(X,d_X)$ has metric Markov cotype $p$ with
\begin{equation}\label{eq:NpGammaK}
N_p(X)\lesssim \Gamma K.
\end{equation}
\end{theorem}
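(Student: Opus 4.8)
The plan is to use the barycenter map $\B$ to manufacture the points $y_1,\dots,y_n$ explicitly, and then verify the two terms on the left-hand side of~\eqref{def:markov cotype} separately using the two hypotheses on $\B$. Given a stochastic matrix $A$ reversible with respect to $\pi\in\Delta^{n-1}$, and points $x_1,\dots,x_n\in X$, I would first introduce the Ces\`aro average $B\eqdef\frac1t\sum_{s=1}^tA^s$, which is again stochastic and, being a convex combination of powers of $A$, is again reversible with respect to $\pi$. For each $i\in\{1,\dots,n\}$ define the probability measure $\mu_i\eqdef\sum_{j=1}^n B_{ij}\,\d_{x_j}\in\P_X$ and set $y_i\eqdef\B(\mu_i)$. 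The intuition is that $y_i$ is a ``weighted barycenter'' of the $x_j$'s along the random walk driven by $A$, so it should be close to $x_i$ (since for $t=1$ and $A$ close to the identity $\mu_i\approx\d_{x_i}$) while the $y_i$'s should be clustered together because nearby indices see nearly the same averaging measure.

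For the first term, apply the $p$-barycentric inequality~\eqref{eq:def p bary} with $x=x_i$ and $\mu=\mu_i$: this gives $d_X(x_i,y_i)^p\le\int_X d_X(x_i,z)^p\,d\mu_i(z)=\sum_{j=1}^n B_{ij}d_X(x_i,x_j)^p$, so after weighting by $\pi_i$ and summing, $\sum_i\pi_i d_X(x_i,y_i)^p\le\sum_{i,j}\pi_iB_{ij}d_X(x_i,x_j)^p$, which is exactly (a constant times) the right-hand side of~\eqref{def:markov cotype}. For the second term, the key is to bound $d_X(y_i,y_j)=d_X(\B(\mu_i),\B(\mu_j))\le\Gamma W_p(\mu_i,\mu_j)$ using the $W_p$ barycentric hypothesis~\eqref{eq:Gamma Wp Lip}, and then to estimate $t\sum_{i,j}\pi_ia_{ij}W_p(\mu_i,\mu_j)^p$. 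The main obstacle, and the technical heart of the argument, is this Wasserstein estimate: one must choose a good coupling of $\mu_i$ and $\mu_j$ (for adjacent $i,j$ with $a_{ij}>0$) and exploit reversibility together with the telescoping/averaging structure of $B=\frac1t\sum_{s=1}^tA^s$. I expect the natural coupling to come from running two copies of the chain, or equivalently from writing $\mu_i-\mu_j$ in terms of $A\mu_{\cdot}-\mu_{\cdot}$ and using that $B-\frac1t(A^{t+1}-A)\cdot(\text{something})$; the Ces\`aro average is exactly what makes the ``boundary terms'' of order $1/t$, which is what cancels the factor $t$ in front of the sum. Reversibility enters to symmetrize the double sum $\sum_{i,j}\pi_ia_{ij}(\cdots)$ and to control it by $\sum_{i,j}\pi_iB_{ij}d_X(x_i,x_j)^p$, the right-hand side of~\eqref{def:markov cotype}.

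Assembling the pieces: the first term contributes $\le\sum_{i,j}\pi_iB_{ij}d_X(x_i,x_j)^p$, and the second term contributes $\le\Gamma^p\cdot t\sum_{i,j}\pi_ia_{ij}W_p(\mu_i,\mu_j)^p\lesssim\Gamma^p\sum_{i,j}\pi_iB_{ij}d_X(x_i,x_j)^p$, where the implied constant in the last step should turn out to be absolute (this is where one must be careful that no hidden dependence on $p$, $n$, or $t$ creeps in). Adding these gives~\eqref{def:markov cotype} with $N_p(X)^p\lesssim 1+\Gamma^p\lesssim(\Gamma K)^p$, i.e.\ $N_p(X)\lesssim\Gamma K$ as claimed; the factor $K$ is essentially carried along from~\eqref{eq:def p bary} in the bookkeeping (in the form it enters Ball-type arguments, see~\cite[Lem~3.1]{Ball}). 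I would expect the Wasserstein computation to be phrased as a separate lemma about stochastic matrices and their Ces\`aro averages, proved by an explicit coupling argument, so that the proof of Theorem~\ref{thm:bary-cotype} itself reduces to the two short applications of the hypotheses on $\B$ described above.
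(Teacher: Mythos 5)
Your treatment of the first term is fine: applying \eqref{eq:def p bary} with $x=x_i$ and $\mu=\mu_i=\sum_k\A_t(A)_{ik}\d_{x_k}$ does give $\sum_i\pi_i d_X(x_i,y_i)^p\le\sum_{i,j}\pi_i\A_t(A)_{ij}d_X(x_i,x_j)^p$, and the paper proves the same bound (for its different $y_i$'s). The fatal problem is the step you yourself flag as the ``technical heart'': the estimate $t\sum_{i,j}\pi_ia_{ij}W_p(\mu_i,\mu_j)^p\lesssim\sum_{i,j}\pi_i\A_t(A)_{ij}d_X(x_i,x_j)^p$ is not just unproved in your sketch --- it is \emph{false} as a statement about general metric spaces, so no coupling lemma of the kind you envisage can exist. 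The reason is structural: your argument for the second term uses only the $W_p$-Lipschitz property \eqref{eq:Gamma Wp Lip} of $\B$, and your argument for the first term uses only the weak half of \eqref{eq:def p bary} (which itself follows from \eqref{eq:Gamma Wp Lip} applied to $\d_x$ and $\mu$). Both hypotheses hold for $X=\R$ with the mean as barycenter, with $\Gamma=1$, for \emph{every} $p\ge1$; so if your outline closed, it would prove $N_1(\R)\lesssim1$, contradicting Lemma~\ref{lem:no cotype R}. Concretely, for the reflected walk on the path with $x_i=i$ and $p=1$, one has $W_1(\mu_i,\mu_{i+1})\ge|\mathrm{mean}(\mu_i)-\mathrm{mean}(\mu_{i+1})|\approx1$ for bulk indices when $t\ll n^2$, so the left-hand side of your Wasserstein estimate is of order $t$ while the right-hand side is of order $\sqrt{t}$. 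The missing ingredient is the second (``uniform convexity gain'') term $\frac{1}{K^p}\int_Xd_X(\B(\mu),y)^p\,d\mu(y)$ in \eqref{eq:def p bary}; any correct proof must exploit it to control the adjacency sum, and in your write-up $K$ never actually does any work --- it is just ``carried along''.

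The paper's proof is built precisely around this point, and its $y_i$ are not $\B(\A_t(A)_{i\cdot})$. It constructs, on the space of length-$t$ trajectories of the chain, iterated conditional barycenters $M_s^{(\ell,t)}$ forming an $X$-valued martingale, sets $y_i=\B\bigl(\frac1t\sum_{s=1}^t\d_{M_0^{(i,s)}}\bigr)$, and controls $t\sum_{i,j}\pi_ia_{ij}d_X(y_i,y_j)^p$ by combining the $W_p$-barycentric property with the Pisier-type martingale inequality of Lemma~\ref{lem:pisier} --- which is exactly where the convexity term of \eqref{eq:def p bary}, and hence the constant $K$, enters. If you want to repair your approach you would have to replace the Wasserstein coupling lemma by an argument of this type; as written, the proposal does not prove the theorem.
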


The special case of Theorem~\ref{thm:bary-cotype} when $X$ is a Banach space whose modulus of uniform convexity has power type $p$ was proved in~\cite{Ball,MN-towards}. Our proof of Theorem~\ref{thm:bary-cotype} is based on an extension of the method of~\cite{MN-towards} to the present nonlinear setting. In particular  we prove for this purpose a nonlinear analogue of Pisier's martingale cotype inequality~\cite{Pisier-martingales}; see Section~\ref{sec:Pisier} below.

\begin{remark}
The property of having metric Markov cotype $p$ is clearly a
bi-Lipschitz invariant. Similarly, the property of being $W_p$
barycentric is a bi-Lipschitz invariant, but this is not the case
for the property of being $p$-barycentic. Thus
Theorem~\ref{thm:bary-cotype} leaves something to be desired, since
its assumption is not invariant under bi-Lipschitz deformations
while its conclusion is. By examining the proof of
Theorem~\ref{thm:bary-cotype} one can extract a somewhat tedious
bi-Lipschitz invariant condition that implies the same
conclusion~\eqref{eq:NpGammaK}. It would be interesting to obtain a
clean intrinsic characterization of those metric spaces $(X,d_X)$
that are bi-Lipschitz equivalent to a $p$-barycentric metric space.
For Banach spaces this was done in~\cite{MN13}, the desired metric
invariant being the notion of {\em Markov $p$-convexity}
(see~\cite{MN13} for the definition). The method of~\cite{MN13}
relies on the Banach space structure, so it remains open to
characterize intrinsically those $W_p$ barycentric metric spaces
that are bi-Lipschitz equivalent to a $p$-barycentric metric space. It would also be interesting to characterize those Finsler manifolds that are $p$-barycentric.
\end{remark}

\subsection{Calculus for nonlinear spectral gaps}\label{sec:calculus}

Let $A=(a_{ij})\in M_n(\R)$ be a symmetric stochastic matrix. Denote the decreasing rearangment of the eigenvalues of $A$  by $1=\lambda_1(A)\ge \lambda_2(A)\ge \lambda_n(A)\ge -1$, and write $\lambda(A)=\max_{i\in \{2,\ldots,n\}} |\lambda_i(A)|$.

Following~\cite{MN-towards}, given a metric space $(X,d_X)$ and $p\in (0,\infty)$ let $\gamma_+(A,d_X^p)$ denote the infimum over those $\gamma_+\in (0,\infty]$ for which every $x_1,\ldots,x_n,y_1,\ldots,y_n\in X$ satisfy
\begin{equation}\label{eq:def gamma+}
\frac{1}{n^2}\sum_{i=1}^n\sum_{j=1}^n d_X(x_i,y_j)^p\le \frac{\gamma_+}{n}\sum_{i=1}^n\sum_{j=1}^n a_{ij} d_X(x_i,y_j)^p.
\end{equation}

Letting $d_\R$ denote the standard metric on $\R$, i.e.,
$d_\R(x,y)=|x-y|$, by simple linear algebra we see that
$\gamma_+(A,d_\R^2)=1/(1-\lambda(A))$. One should therefore think of
the quantity $\gamma_+(A,d_X^p)$ as measuring the magnitude of the
 {\em nonlinear absolute spectral gap} of the matrix $A$ with respect
to the geometry of $X$. We refer to~\cite{MN-towards} for a detailed
discussion of nonlinear spectral gaps and their applications.

Despite the fact that we call inequalities such as~\eqref{eq:def gamma+} ``spectral inequalities", there is no actual spectrum present here, and therefore tools that are straightforward in the linear setting due to the link to linear algebra fail to hold true in general. This is especially important in the context of {\em nonlinear spectral calculus}, where one aims to relate $\gamma_+(A^t,d_X^p)$ to $\gamma_+(A,d_X^p)$. We refer to~\cite{MN-towards} for an explanation of the importance of this problem, where the following theorem is proved.

\begin{theorem}\label{thm:quote MN} There exists a universal constant $\kappa\in (0,\infty)$ with the following property. Suppose that $p\in [1,\infty)$ and that $(X,d_X)$ is a metric space that has metric Markov cotype $p$. Then for every $n,t\in \N$, every symmetric stochastic matrix $A\in M_n(\R)$ satisfies
$$
\gamma_+\left(\frac{1}{t}\sum_{s=1}^t A^s,d_X^p\right)\le (\kappa N_p(X))^p\max\left\{1,\frac{\gamma_+(A,d_X^p)}{t}\right\}.
$$
\end{theorem}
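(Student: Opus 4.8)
The plan is to extract a ``one-sided'' spectral-gap estimate for $B\eqdef\frac{1}{t}\sum_{s=1}^tA^s$ directly from the metric Markov cotype inequality, and then to upgrade it to the two-sided quantity $\gamma_+(B,d_X^p)$. One may assume $\gamma_+(A,d_X^p)<\infty$ and $N_p(X)<\infty$, since otherwise the right-hand side is infinite; the case $t=1$ is trivial, because then $B=A$ and the assertion reads $\gamma_+(A,d_X^p)\leq(\kappa N_p(X))^p\max\{1,\gamma_+(A,d_X^p)\}$, so assume $t\geq 2$. Note that $B$ is symmetric and stochastic, hence reversible relative to the uniform distribution $\pi=(1/n,\dots,1/n)$.

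For the one-sided estimate, I would fix $x_1,\dots,x_n\in X$ and apply Definition~\ref{def:Mcotype} to the matrix $A$, the uniform $\pi$, the points $x_1,\dots,x_n$, and the integer $t$; this produces $y_1,\dots,y_n\in X$ with
\begin{equation*}
\frac{1}{n}\sum_{i=1}^n d_X(x_i,y_i)^p+\frac{t}{n}\sum_{i=1}^n\sum_{j=1}^n a_{ij}\,d_X(y_i,y_j)^p\;\leq\;\frac{N_p(X)^p}{n}\sum_{i=1}^n\sum_{j=1}^n B_{ij}\,d_X(x_i,x_j)^p.
\end{equation*}
Then I would bound $\frac{1}{n^2}\sum_{i,j}d_X(x_i,x_j)^p$ via the triangle inequality $d_X(x_i,x_j)\leq d_X(x_i,y_i)+d_X(y_i,y_j)+d_X(y_j,x_j)$ and the convexity bound $(a+b+c)^p\leq 3^{p-1}(a^p+b^p+c^p)$: the two endpoint terms together contribute $3^{p-1}\cdot\frac{2}{n}\sum_i d_X(x_i,y_i)^p$, while the middle term is $3^{p-1}\cdot\frac{1}{n^2}\sum_{i,j}d_X(y_i,y_j)^p$, which by the definition of $\gamma_+(A,d_X^p)$ applied with both point families equal to $(y_i)_{i=1}^n$ is at most $3^{p-1}\gamma_+(A,d_X^p)\cdot\frac1n\sum_{i,j}a_{ij}d_X(y_i,y_j)^p$. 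Bounding $\frac1n\sum_i d_X(x_i,y_i)^p$ and $\frac1n\sum_{i,j}a_{ij}d_X(y_i,y_j)^p$ by the displayed metric Markov cotype inequality (the latter carrying an extra factor $1/t$) yields
\begin{equation*}
\frac{1}{n^2}\sum_{i=1}^n\sum_{j=1}^n d_X(x_i,x_j)^p\;\leq\;3^{p-1}N_p(X)^p\left(2+\frac{\gamma_+(A,d_X^p)}{t}\right)\cdot\frac{1}{n}\sum_{i=1}^n\sum_{j=1}^n B_{ij}\,d_X(x_i,x_j)^p,
\end{equation*}
so that, by $2+\xi\leq 3\max\{1,\xi\}$, the one-sided nonlinear spectral gap of $B$ relative to $d_X^p$ is at most $(3N_p(X))^p\max\{1,\gamma_+(A,d_X^p)/t\}$.

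The remaining step, and the one I expect to be the main obstacle, is to pass from this one-sided estimate to $\gamma_+(B,d_X^p)$: unlike the one-sided version, $\gamma_+$ feels the bottom of the spectrum (for $X=\R$ one has $\gamma_+(C,d_\R^2)=(1-\lambda(C))^{-1}$ with $\lambda(C)=\max_{i\geq 2}|\lambda_i(C)|$), so the estimate just derived does not by itself control it. What makes the upgrade possible is that the Ces\`aro average regularizes the negative end of the spectrum: for every $\mu\in[-1,1]$ and $t\in\N$ one has $\frac1t\sum_{s=1}^t\mu^s\geq -\frac1t$ (for $\mu\neq 1$, multiply by $1-\mu>0$ and use $\mu^{t+1}\leq 1$), whence every eigenvalue of $B$ is at least $-\frac1t$, and for $t\geq 2$ the spectrum of $B$ is bounded away from $-1$. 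I would then invoke a comparison principle, of the kind established in~\cite{MN-towards}, that bounds $\gamma_+$ of such an (almost positive semidefinite) symmetric stochastic matrix by a constant depending only on $p$ times its one-sided nonlinear spectral gap; combined with the estimate above this gives $\gamma_+(B,d_X^p)\leq C(p)\,N_p(X)^p\max\{1,\gamma_+(A,d_X^p)/t\}$, and absorbing $C(p)$, the factor $3^{p-1}$, and the remaining bounded constants into one universal $\kappa$ finishes the proof. Making this two-sided-versus-one-sided comparison rigorous in the metric setting is where the real work lies: over $\R$ it is a one-line eigenvalue computation, but without a spectral theorem one must argue purely with distances, exploiting only that $B$ is an average of the powers $A,A^2,\dots,A^t$ and is therefore nearly positive semidefinite.
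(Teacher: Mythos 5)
First, note that this paper does not prove Theorem~\ref{thm:quote MN} at all: it is quoted verbatim from~\cite{MN-towards}, so there is no in-paper argument to compare against, and your proposal must stand on its own.

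The first half of your proposal is correct, but it proves a weaker statement. Applying the metric Markov cotype inequality to the single family $x_1,\ldots,x_n$ and running the triangle inequality through the auxiliary points $y_1,\ldots,y_n$ does yield $\frac{1}{n^2}\sum_{i,j}d_X(x_i,x_j)^p\le 3^pN_p(X)^p\max\{1,\gamma_+(A,d_X^p)/t\}\cdot\frac1n\sum_{i,j}\A_t(A)_{ij}d_X(x_i,x_j)^p$, i.e.\ it controls the \emph{one-sided} Poincar\'e constant of $\A_t(A)$. The gap is exactly the step you defer to the end: there is no ``comparison principle'' in~\cite{MN-towards} (or elsewhere) asserting that for a symmetric stochastic matrix with spectrum bounded away from $-1$ the two-sided quantity $\gamma_+$ is controlled by the one-sided one \emph{over a general metric space}; near-positive-semidefiniteness is a linear-algebraic hypothesis with no known purely metric surrogate. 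The obstruction is concrete: any attempt to pass from the one-sided bound to $\frac{1}{n^2}\sum_{i,j}d_X(x_i,y_j)^p$ by triangle inequalities leaves over the diagonal term $\frac1n\sum_{i}d_X(x_i,y_i)^p$, and this cannot be absorbed into $\frac1n\sum_{i,j}\A_t(A)_{ij}d_X(x_i,y_j)^p$ without spectral information, since the diagonal entries of $\A_t(A)$ may be uniformly small (e.g.\ for the random walk on a long cycle they are of order $1/\sqrt{t}$), and the natural iteration of triangle inequalities only trades this term for one involving $\A_t(A)^2$ and back again. Over $\R$ one closes the loop with the spectral theorem, as you observe; in a metric space that is precisely the missing argument, not a citable black box.

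The way the theorem is actually proved in~\cite{MN-towards} is to make the cotype hypothesis see \emph{both} families at once: one applies Definition~\ref{def:Mcotype} to the $2n$ points $z=(x_1,\ldots,x_n,y_1,\ldots,y_n)$ and a $2n\times 2n$ symmetric stochastic matrix built from $A$ (of the form $\bigl(\begin{smallmatrix}0&A\\A&0\end{smallmatrix}\bigr)$), so that the resulting auxiliary points $(u_1,\ldots,u_n,v_1,\ldots,v_n)$ directly control the mixed sum $\sum_{i,j}a_{ij}d_X(u_i,v_j)^p$, and hence, via $\gamma_+(A,d_X^p)$ and the triangle inequality, the two-sided average $\frac{1}{n^2}\sum_{i,j}d_X(x_i,y_j)^p$; the remaining work is to compare the Ces\`aro average of the doubled matrix with $\A_t(A)$ (in the spirit of Lemma~\ref{lem:control by cesaro}). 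If you want to complete your proof, you should replace the final ``comparison principle'' step by this doubling of the point system at the outset.
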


By combining Theorem~\ref{thm:bary-cotype} and Theorem~\ref{thm:quote MN} we conclude that the following result holds true.

\begin{theorem}\label{thm:calculus cesaro}
There exists a universal constant $c\in (1,\infty)$ such that for every $p,K,\Gamma\in [1,\infty)$, if $(X,d_X)$ is a metric space that is $W_p$ barycentric with constant $\Gamma$ and $p$-barycentric with constant $K$ then for every $n,t\in \N$, every symmetric stochastic matrix $A\in M_n(\R)$ satisfies
\begin{equation}\label{eq:bary calculus ineq}
\gamma_+\left(\frac{1}{t}\sum_{s=1}^t A^s,d_X^p\right)\le (c\Gamma K)^p \max\left\{1,\frac{\gamma_+(A,d_X^p)}{t}\right\}.
\end{equation}
\end{theorem}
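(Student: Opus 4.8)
The plan is to obtain Theorem~\ref{thm:calculus cesaro} as a formal consequence of the two immediately preceding results, with no new ideas required. First I would invoke Theorem~\ref{thm:bary-cotype}: under the stated hypotheses --- that $(X,d_X)$ is $W_p$ barycentric with constant $\Gamma$ and $p$-barycentric with constant $K$, both with respect to the same barycenter map $\B:\P_X\to X$ --- that theorem provides a universal constant $C\in(1,\infty)$ such that $(X,d_X)$ has metric Markov cotype $p$ with $N_p(X)\le C\Gamma K$. In particular $p\in[1,\infty)$ and $(X,d_X)$ has finite metric Markov cotype $p$, so the hypotheses of Theorem~\ref{thm:quote MN} are satisfied.

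Next I would apply Theorem~\ref{thm:quote MN} with this same value of $p$: for every $n,t\in\N$ and every symmetric stochastic matrix $A\in M_n(\R)$,
$$
\gamma_+\left(\frac1t\sum_{s=1}^t A^s,\, d_X^p\right)\le \bigl(\kappa N_p(X)\bigr)^p\max\left\{1,\frac{\gamma_+(A,d_X^p)}{t}\right\},
$$
where $\kappa\in(0,\infty)$ is the universal constant from that theorem. Substituting the bound $N_p(X)\le C\Gamma K$ gives $\bigl(\kappa N_p(X)\bigr)^p\le (\kappa C\Gamma K)^p$, so taking $c\eqdef \kappa C$ (which we may enlarge, if needed, to ensure $c>1$) yields exactly~\eqref{eq:bary calculus ineq}.

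The only point to verify is that the resulting constant $c$ is genuinely universal, i.e.\ it depends on neither $p$, $K$, $\Gamma$, nor on $n$, $t$, $A$, $X$; this is immediate since $c$ is built solely from the absolute constants $\kappa$ and $C$ supplied by Theorems~\ref{thm:quote MN} and~\ref{thm:bary-cotype}. Consequently there is no real obstacle in this deduction --- all of the substantive work is contained in the proof of Theorem~\ref{thm:bary-cotype}, which is carried out in Section~\ref{sec:cotype proof} and relies on the nonlinear analogue of Pisier's martingale cotype inequality developed in Section~\ref{sec:Pisier}, together with the proof of Theorem~\ref{thm:quote MN} given in~\cite{MN-towards}.
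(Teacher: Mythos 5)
Your deduction is correct and is precisely the paper's own argument: Theorem~\ref{thm:calculus cesaro} is stated there as the immediate combination of Theorem~\ref{thm:bary-cotype} (giving $N_p(X)\lesssim \Gamma K$) with Theorem~\ref{thm:quote MN}, exactly as you do. Nothing further is needed.
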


For future applications it is worthwhile to single out the following special case of Theorem~\ref{thm:calculus cesaro}.

\begin{corollary}\label{coro:cat0 calculus}
Suppose that $(X,d_X)$ is a $CAT(0)$ space. Then
for every $n,t\in \N$, every symmetric stochastic matrix $A\in M_n(\R)$ satisfies
\begin{equation*}\label{eq:calculus cat0}
\gamma_+\left(\frac{1}{t}\sum_{s=1}^t A^s,d_X^2\right)\lesssim \max\left\{1,\frac{\gamma_+(A,d_X^2)}{t}\right\}.
\end{equation*}
\end{corollary}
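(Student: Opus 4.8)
The plan is to obtain Corollary~\ref{coro:cat0 calculus} as an essentially immediate consequence of Theorem~\ref{thm:calculus cesaro}, once the relevant barycentric properties of $CAT(0)$ spaces are recorded. The only two things that require a word of care are a reduction to complete $CAT(0)$ spaces and the observation that the two barycentric conditions in Theorem~\ref{thm:calculus cesaro} must be witnessed by a \emph{single} barycenter map.

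\emph{Reduction to Hadamard spaces.} A general $CAT(0)$ space $(X,d_X)$ embeds isometrically into its metric completion $(\bar X,d_{\bar X})$, which is a complete $CAT(0)$ space, i.e.\ a Hadamard space. Since the quantity appearing in the definition of $\gamma_+(\cdot,d^2)$ in~\eqref{eq:def gamma+} is governed by a closed condition on finite configurations of points, and $X$ is dense in $\bar X$, we have $\gamma_+(B,d_X^2)=\gamma_+(B,d_{\bar X}^2)$ for every symmetric stochastic matrix $B\in M_n(\R)$: the inequality~\eqref{eq:def gamma+} holding for all configurations in $\bar X$ trivially implies it for configurations in $X$, and the converse follows by approximating a configuration in $\bar X$ by configurations in $X$ and using continuity of both sides of~\eqref{eq:def gamma+} in the points. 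Hence it suffices to prove the estimate when $X$ is a Hadamard space.

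\emph{Application of Theorem~\ref{thm:calculus cesaro}.} As recalled in the introduction, every Hadamard space is $2$-barycentric with constant $K=1$ and also $W_1$ barycentric with constant $1$, \emph{with respect to the same barycenter map} $\B:\P_X\to X$; concretely one takes $\B(\mu)$ to be the unique minimizer of $z\mapsto \int_X d_X(z,y)^2\,d\mu(y)$ (see \cite[Lem.~4.1,~4.2]{LPS-cat0-ext} or \cite[Thm.~6.3,~Lem.~4.4]{Stu03}). Since $W_1\le W_2$ pointwise on $\P_X$ (by Cauchy--Schwarz applied to the couplings), this same $\B$ also witnesses that $X$ is $W_2$ barycentric with constant $\Gamma=1$. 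Thus $X$ satisfies the hypotheses of Theorem~\ref{thm:calculus cesaro} with $p=2$ and $\Gamma=K=1$, and that theorem yields~\eqref{eq:bary calculus ineq} with implicit constant $c^2$, which is precisely the asserted inequality.

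\emph{Where the difficulty lies.} At the level of this corollary there is essentially no obstacle: all of the analytic content has been packaged into Theorem~\ref{thm:calculus cesaro}, which in turn rests on Theorem~\ref{thm:bary-cotype} (the nonlinear Pisier-type argument) combined with Theorem~\ref{thm:quote MN}. The only genuinely substantive point to be careful about is that $p$-barycentricity is not a bi-Lipschitz invariant, so one cannot afford to use one barycenter map for the $W_2$ condition and another for the $2$-barycentric condition; this is why we insist on the explicit minimizing barycenter of a Hadamard space, which is known to satisfy both. I expect no further difficulty.
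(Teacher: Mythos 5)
Your proposal is correct and matches the paper's treatment: the paper states Corollary~\ref{coro:cat0 calculus} as an immediate special case of Theorem~\ref{thm:calculus cesaro}, using exactly the facts you cite (Hadamard spaces are $W_1$, hence $W_2$, barycentric with constant $1$ and $2$-barycentric with constant $K=1$ with respect to the same barycenter map). Your additional remarks on passing to the completion and on using a single barycenter map are sound points of care that the paper leaves implicit.
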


Corollary~\ref{coro:cat0 calculus} was the main motivation for the investigations that led to the present paper, since it plays a key role in our forthcoming work~\cite{MN12-random} that establishes for the first time the existence of expanders with respect to certain Hadamard spaces and random graphs.

For the purpose of the applications in~\cite{MN12-random}, the fact that the spectral calculus inequality~\eqref{eq:bary calculus ineq} involves  Ces\`aro averages of $A$ rather than powers of $A$ is immaterial, but it is natural to ask if it is possible to relate $\gamma_+(A^t,d_X^p)$ to $\gamma_+(A,d_X^p)$. In the setting of general barycentric metric spaces  this question remains open, but for $CAT(0)$ spaces, or more generally under the requirement $K=1$ in~\eqref{eq:def p bary}, it is indeed possible to do so, albeit via an upper bound on $\gamma_+(A^t,d_X^p)$ in terms of $\gamma_+(A,d_X^p)$ that is weaker than the right hand side of~\eqref{eq:bary calculus ineq}.

\begin{theorem}\label{thm:calculus gap} There is a universal constant $C\in (0,\infty)$ with the following property. Fix $p,\Gamma\in [1,\infty)$ and suppose that $(X,d_X)$ is a metric space that is $W_p$ barycentric with constant $\Gamma$ and $p$-barycentric with constant $K=1$. Then for every $n,t\in \N$, every symmetric stochastic matrix $A\in M_n(\R)$ satisfies
$$
\gamma_+\left(A^t,d_X^p\right)\le (C\Gamma )^{p}\left(\max\left\{1,p\cdot \frac{ \gamma_+(A,d_X^p)}{t}\right\}\right)^p.
$$
\end{theorem}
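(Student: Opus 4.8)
The plan is to use the Ces\`aro-average spectral calculus inequality of Theorem~\ref{thm:calculus cesaro} (equivalently, the metric Markov cotype estimate of Theorem~\ref{thm:bary-cotype} fed into Theorem~\ref{thm:quote MN}) as the engine, and to convert the Ces\`aro averages $\frac1m\sum_{s=1}^m A^s$ appearing there into the honest power $A^t$. This conversion is exactly where the hypothesis $K=1$ enters in an essential way: the iteration below dissipates ``energy'' cleanly only when the defect term in~\eqref{eq:def p bary} carries no constant, i.e., when one has the sharp ``Pythagorean'' inequality
\[
\forall\, z\in X,\ \mu\in\P_X,\qquad d_X\big(z,\B(\mu)\big)^p+\int_X d_X\big(\B(\mu),y\big)^p\,d\mu(y)\le \int_X d_X(z,y)^p\,d\mu(y).
\]
For $K>1$ the corresponding errors compound multiplicatively, which is consistent with the fact, noted above, that the power version remains open in that generality.

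Concretely, I would work with the barycentric smoothing operator $T_A\colon X^n\to X^n$ given by $(T_Ax)_i=\B\big(\sum_{j=1}^n A_{ij}\d_{x_j}\big)$. Since $(X,d_X)$ is $W_p$ barycentric with constant $\Gamma$, inequality~\eqref{eq:Gamma Wp Lip} together with the double stochasticity of $A$ gives $\frac1n\sum_{i=1}^n d_X\big((T_Ax)_i,(T_Ay)_i\big)^p\le \Gamma^p\cdot\frac1n\sum_{i=1}^n d_X(x_i,y_i)^p$, so $T_A$ is $\Gamma$-Lipschitz on $X^n$ in the normalized $\ell_p$ metric. Applying the Pythagorean inequality with $K=1$ to $\mu=\sum_j A_{ij}\d_{x_j}$ yields, for every $z\in X$, the energy-dissipation bound $d_X\big(z,(T_Ax)_i\big)^p+\sum_{j=1}^n A_{ij}\,d_X\big((T_Ax)_i,x_j\big)^p\le\sum_{j=1}^n A_{ij}\,d_X(z,x_j)^p$. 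Telescoping this along the trajectory $x,T_Ax,T_A^2x,\dots$, and at each level invoking the definition~\eqref{eq:def gamma+} of $\gamma_+(A,d_X^p)$ to compare the Dirichlet energy $E_s:=\frac1n\sum_{i,j}A_{ij}\,d_X\big((T_A^sx)_i,(T_A^sx)_j\big)^p$ with the spread $V_s:=\frac1{n^2}\sum_{i,j}d_X\big((T_A^sx)_i,(T_A^sx)_j\big)^p$, one should obtain that a suitable functional of $(V_s,E_s)$ decays at a rate governed by $\gamma_+(A,d_X^p)$, so that the configuration $T_A^tx$ is essentially ``collapsed'' once $t\gtrsim\gamma_+(A,d_X^p)$.

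The crux is to transfer this information about the operator $T_A^t$ to the matrix $A^t$, which is what $\gamma_+(A^t,d_X^p)$ actually concerns; here $(T_A^tx)_i\neq\B\big(\sum_j(A^t)_{ij}\d_{x_j}\big)$ in general and $T_A$ is strictly contracting, so the two cannot simply be identified. I would compare $d_X(x_i,y_j)^p$-averages against $(A^t)_{ij}$-averages by inserting the intermediate configurations $T_A^sx$ and $T_A^sy$, using the Pythagorean inequality once more and the triangle inequality for $d_X^p$ (in the form $(\sum_{\ell=1}^r a_\ell)^p\le r^{p-1}\sum_\ell a_\ell^p$), and feeding Theorem~\ref{thm:calculus cesaro} into the ``bulk'' term. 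Organized as a doubling recursion $A^m\rightsquigarrow A^{2m}$ run $\lceil\log_2 t\rceil$ times, with the multiplicative $\Gamma$-loss incurred only through a single application of Theorem~\ref{thm:calculus cesaro}, this should produce simultaneously the lone prefactor $(C\Gamma)^p$ and the $p$-dependent shape $\big(\max\{1,p\,\gamma_+(A,d_X^p)/t\}\big)^p$: the extra outer exponent $p$ and the linear factor $p$ inside are, morally, the price of the absence of a quadrilateral (parallelogram-type) inequality for $d_X^p$ when $p\neq 2$; in the linear case $p=2$ the same scheme would instead give the cleaner $\max\{1,\gamma_+(A,d_X^2)/t\}$. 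The passage from $t$ a power of $2$ to a general $t$ is a minor point, handled by absorbing the leftover power $A^{t-2^{\lfloor\log_2 t\rfloor}}$ via one further application of the Pythagorean inequality.

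The hard part will be exactly this transfer step, together with its bookkeeping: controlling the contraction of $T_A$ so that passing through the configurations $T_A^sx$ does not destroy the separation of $\{x_i\}_{i=1}^n$ while still harvesting the decay of $V_s$, and keeping the $\max\{1,\cdot\}$ truncations, the exponents, and the $r^{p-1}$ losses under control across $\Theta(\log t)$ iterations so that only one $\Gamma^p$ survives and the $p$-dependence collapses to exactly the stated form, rather than to $\Gamma^{p\log t}$ or a worse power of $p$. An alternative that I would keep in reserve is to bypass the explicit iteration by replacing $A$ with a ``rotation'' lift $\mathrm{rot}_t(A)$ acting on $[n]\times\{0,1,\dots,t-1\}$ (a random step either moves in $[n]$ according to $A$ or shifts the second coordinate), for which $\gamma_+(\,\cdot\,,d_X^p)$ is comparable to $\gamma_+(A,d_X^p)$ while an appropriate Ces\`aro average of the lift encodes $A^t$ after projection; applying Theorem~\ref{thm:calculus cesaro} to the lift and pushing back down should again yield the estimate, with the $p$-factors appearing in the projection step.
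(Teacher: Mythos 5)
Your outline has the right general shape---the barycentric smoothing operator $T_A=A\otimes I_X^n$, the clean energy-dissipation inequality available when $K=1$, and the recognition that the real difficulty is transferring information from the iterate $T_A^{[t]}$ to the matrix $A^t$---but the proposal stops exactly at that transfer step, and the mechanism you suggest for it does not close. First, the ``engine'' you propose (Theorem~\ref{thm:calculus cesaro}, i.e.\ the Ces\`aro bound, to be converted into a bound for the power $A^t$) goes in the wrong direction: Lemma~\ref{lem:control by cesaro} dominates $A^t$-weighted sums by $\A_t(A)$-weighted sums, so it yields $\gamma_+(\A_t(A),d_X^p)\lesssim 2^p\gamma_+(A^t,d_X^p)$ and not the reverse; a bipartite $A$ shows the reverse entrywise comparison genuinely fails. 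Your fallback ``doubling recursion $A^m\rightsquigarrow A^{2m}$ over $\log_2 t$ rounds'' is left entirely schematic, and the compounding of $\Gamma$- and $r^{p-1}$-losses over $\Theta(\log t)$ rounds that you yourself flag is a real obstruction, not mere bookkeeping.

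The paper's proof avoids any such iteration over $\log t$ scales by isolating two ideas that are absent from your proposal. (1) The right quantity to contract is not the spread $V_s$ but $\lambda_p(T)$, the Lipschitz ratio of $f\mapsto d_{L_p^n(X)}(f,\B(f))$ under $T$; a single application of~\eqref{eq:def p bary} with $x=\B(f)$ (Lemmas~\ref{lem:use beta} and~\ref{lem:A norm 1 from B}) gives the one-step bound $\lambda_p(A\otimes I_X^n)\le\bigl((\gamma_+(A,d_X^p)-1)/(\gamma_+(A,d_X^p)+1)\bigr)^{1/p}$ when $K=1$, and then $\lambda_p(T^{[2t]})\le\lambda_p(T)^{2t}$ supplies the entire $t$-dependence at once. (2) The transfer to $A^t$ is done by splitting $\bigl(\tfrac1{n^2}\sum_{i,j}d_X(f(i),g(j))^p\bigr)^{1/p}\le d_X(\B(f),\B(g))+d_{L_p^n(X)}(f,\B(f))+d_{L_p^n(X)}(g,\B(g))$ and bounding the first term by observing that $\pi=\tfrac1n\sum_{i,j}(A^t)_{ij}\d_{(f(i),g(j))}$ is a coupling of the empirical measures of $f$ and $g$, so $W_p$-barycentricity gives $d_X(\B(f),\B(g))\le\Gamma\bigl(\tfrac1n\sum_{i,j}(A^t)_{ij}d_X(f(i),g(j))^p\bigr)^{1/p}$; the remaining two terms are handled by the geometric decay of $\lambda_p$ together with Lemma~\ref{lem:f close to A iterate}, which controls $d_{L_p^n(X)}(f,T_A^{[2t]}f)$ by the $(A^{2t})$-weighted Dirichlet sum. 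This coupling trick is what brings $A^t$ onto the right-hand side with a single factor of $\Gamma$, and without it (or a substitute) I do not see how your scheme produces the stated bound rather than $\Gamma^{O(\log t)}$. As written, the proposal is a plausible plan with its decisive step missing.
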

Our proof of Theorem~\ref{thm:calculus gap} relies on ideas from~\cite[Sec.~6]{MN-towards}, where a similar treatment is given to uniformly convex Banach spaces (in this special context the conclusion of Theorem~\ref{thm:calculus gap} holds true even without the restriction $K=1$). In the present nonlinear setting several modifications of the argument of~\cite{MN-towards} are required; see Section~\ref{sec: norm} below.

\subsection{Lipschitz extension}\label{sec:ext intro}

If $(X,d_X)$ and $(Y,d_Y)$ are metric spaces then for every
$S\subseteq X$ denote by $e(X,S,Y)$ the infimum over those $L\in
(0,\infty)$ such that for {\em every} Lipschitz function $f:S\to Y$
there exists $F:X\to Y$ with $F(x)=f(x)$ for every $x\in S$ such that $\|F\|_{\Lip}\le L\|f\|_{\Lip}$,
where $\|f\|_{\Lip}$ denotes the Lipschitz constant of $f$. If no
such $L$ exists then set $e(X,S,Y)=\infty$.

Defining
$$
e(X,Y)\eqdef \sup\left\{e(X,S,Y):\ S\subseteq X\right\},
$$
the goal of the {\em Lipschitz extension problem} is to understand
which pairs of metric spaces $(X,d_X), (Y,d_Y)$ satisfy
$e(X,Y)<\infty$, and, when that happens, to obtain good bounds on
$e(X,Y)$. Due to its intrinsic importance as well as many
applications in analysis and geometry, the Lipschitz extension
problem has been extensively investigated over the past century. We
shall not attempt to indicate the vast literature on this topic,
referring instead to the book~\cite{BB12} and the references
therein.

K. Ball introduced~\cite{Ball}   the notions of Markov type and
cotype in order to prove an important Lipschitz extension theorem
known today as {\em Ball's extension theorem}. Based  on Ball's
ideas in~\cite{Ball}, the following result is proved in
Section~\ref{sec:ext}.

\begin{theorem}[generalized Ball extension theorem]\label{thm:general keith}
Fix $p,\Gamma\in [1,\infty)$. Let $(X,d_X)$ be a metric space of Markov type $p$ and let $(Y,d_Y)$ be a metric space of metric Markov cotype $p$ that is $W_p$ barycentric with constant $\Gamma$. Suppose that $Z\subseteq X$ and $f:Z\to Y$ is Lipschitz. Then for every finite subset $S\subseteq X$ there exists $F:S\to Y$ with $F|_{S\cap Z}=f|_{S\cap Z}$ and
\begin{equation}\label{eq:extension lip bound}
\|F\|_{\Lip} \lesssim \Gamma M_p(X)N_p(Y)\|f\|_{\Lip}.
\end{equation}
\end{theorem}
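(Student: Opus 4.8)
The plan is to follow Ball's two-step strategy: first extend $f$ from $Z\cap S$ to all of $S$ using a partition-of-unity type construction on the finite metric space $X$ (or rather on $S$), producing for each $x\in S$ a probability measure on (images of) points of $Z\cap S$; then apply the barycenter map $\B:\P_Y\to Y$ to turn this measure-valued map into an honest $Y$-valued map $F$. Concretely, for a finite subset $S\subseteq X$ one works with the random walk / reversible Markov chain machinery: one builds a stochastic matrix $A$ on $S$ that is reversible with respect to some measure $\pi\in\Delta^{|S|-1}$, engineered so that (i) the chain started at any $x\in S$ reaches $Z\cap S$ quickly and (ii) the single-step distances $d_X(x_i,x_j)$ weighted by $\pi_ia_{ij}$ are controlled by $\|f\|_{\Lip}^p$ times the relevant scale. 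The Markov type $p$ hypothesis on $X$ is then used, via~\eqref{eq:to reverse type}, to bound $\sum_{i,j}\pi_i(A^t)_{ij}d_X(x_i,x_j)^p$ (and the Ces\`aro-averaged version) in terms of the one-step quantity, which on $Z\cap S$ translates, through the Lipschitz bound on $f$, into control of $\sum_{i,j}\pi_i\big(\tfrac1t\sum_{s=1}^t A^s\big)_{ij}d_Y(f(z_i),f(z_j))^p$.

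The second step is where metric Markov cotype $p$ of $Y$ enters. Feeding the points $\{f(z_i)\}$ (extended arbitrarily, e.g. by a McShane/Lipschitz extension of $f$ as a real-valued coordinate-free placeholder, or more carefully by using a net) and the matrix $A$ into Definition~\ref{def:Mcotype}, one obtains points $\{y_i\}_{i\in S}\subseteq Y$ with $\sum_i\pi_i d_Y(x_i\text{-image},y_i)^p + t\sum_{i,j}\pi_ia_{ij}d_Y(y_i,y_j)^p \le N_p(Y)^p\sum_{i,j}\pi_i\big(\tfrac1t\sum_s A^s\big)_{ij}d_Y(\cdot,\cdot)^p$. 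The first summand forces $y_i$ to agree (in an averaged sense, and after choosing $t$ and $A$ appropriately so the chain is already absorbed in $Z\cap S$) with $f(z_i)$ on $Z\cap S$; here the $W_p$-barycentric property with constant $\Gamma$ is what lets one actually realize the ``$y_i$'' as $\B$ of a nearby measure and thereby guarantee exact agreement $F|_{S\cap Z}=f|_{S\cap Z}$ while losing only a factor $\Gamma$. The second summand, $t\sum_{i,j}\pi_ia_{ij}d_Y(y_i,y_j)^p$, together with the Markov type bound from step one, yields the Lipschitz estimate: for adjacent $x_i,x_j$ (those with $a_{ij}$ bounded below) one gets $d_Y(F(x_i),F(x_j))\lesssim \Gamma M_p(X)N_p(Y)\|f\|_{\Lip}d_X(x_i,x_j)$, and a chaining/telescoping argument over geodesic-like paths in the graph of $A$ upgrades this to the full Lipschitz bound~\eqref{eq:extension lip bound} for all pairs in $S$.

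The main obstacle I expect is the precise construction of the reversible matrix $A$ on the finite set $S$ that simultaneously does all three jobs — absorbs into $Z\cap S$ within $t$ steps, has one-step distances controlled by the local scale, and is reversible with respect to a workable $\pi$ — and, intertwined with it, the correct choice of the time parameter $t$ so that the Ces\`aro average $\tfrac1t\sum_{s=1}^t A^s$ on the right-hand side of~\eqref{def:markov cotype} is dominated (via Markov type $p$, which naturally controls $A^t$ but not obviously the Ces\`aro average — though the Ces\`aro average is a convex combination of $A^s$ for $s\le t$, each of which is handled by iterating~\eqref{eq:to reverse type}) by $\|f\|_{\Lip}^p$ times a telescoping sum that collapses. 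Getting the agreement-on-$Z$ condition to be \emph{exact} rather than approximate is the other delicate point, and this is precisely where $W_p$ barycentricity (as opposed to mere metric Markov cotype) is essential: one replaces the abstract $y_i$ produced by Definition~\ref{def:Mcotype} by $\B$ of an explicit measure built from the walk, using~\eqref{eq:Gamma Wp Lip} to pay only $\Gamma$ for the modification. The chaining argument in the final step and the reduction from finite $S$ to the stated conclusion are comparatively routine.
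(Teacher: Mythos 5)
Your proposal is missing the central mechanism of the proof: Ball's duality (Hahn--Banach separation) argument. The actual strategy is not to build one cleverly engineered reversible chain on $S$ that ``absorbs into $Z\cap S$'' and then chain along its edges. Instead, one proves a \emph{dual extension criterion} (Lemma~\ref{lem:ball 1.1}): if for \emph{every} symmetric matrix $H$ with nonnegative entries there is a map $\Phi^H$ agreeing with $f$ on $Z\cap S$ and satisfying $\sum_{i,j}h_{ij}d_Y(\Phi^H(x_i),\Phi^H(x_j))^p\le\Lambda^p\|f\|_{\Lip}^p\sum_{i,j}h_{ij}d_X(x_i,x_j)^p$, then a separation theorem produces a convex combination of such maps that works for all pairs simultaneously, and the $W_p$ barycentric property is used precisely here --- to convert the convex combination $\sum_k\lambda_k\delta_{\Phi^k(x_i)}$ into a single point $F(x_i)=\B(\sum_k\lambda_k\delta_{\Phi^k(x_i)})$, paying the factor $\Gamma$ via a Wasserstein coupling. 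Exact agreement on $Z$ is then automatic (all $\Phi^k$ agree with $f$ there, so the measure is a point mass), not something to be forced by making the chain hit $Z$ quickly. The reversible matrix $A$, the probability vector $\pi$, and a second stochastic matrix $B$ linking $S\setminus Z$ to $Z$ are all read off from the entries of the given dual certificate $H$ (its blocks $V(H)$ and $W(H)$), which is why a single chain doing ``all three jobs'' is never needed; the obstacle you identify as the main difficulty is an artifact of the wrong architecture.

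A second concrete gap: metric Markov cotype (Definition~\ref{def:Mcotype}) takes as input points of $Y$ indexed by the state space of the chain, i.e.\ by the points of $S\setminus Z$ to which $f$ has not yet been extended, so one cannot feed it the points $f(z_r)$ directly as you suggest. The paper supplies the missing input via Lemma~\ref{lem:CN}: given the stochastic matrices $B$ and $C=\A_\tau(A)$ and the points $f(z_1),\ldots,f(z_m)\in Y$, it produces auxiliary points $w_1,\ldots,w_n\in Y$ (via an isometric embedding into $\ell_\infty^m$, averaging, and a nearest-point selection) whose relevant energies are controlled by $3^p\sum_{r,s}(B^*D_\pi\A_\tau(A)B)_{rs}d_Y(f(z_r),f(z_s))^p$; the cotype inequality is then applied to these $w_i$, and the resulting $y_i$ are the values $\Phi^H(x_i)$. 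Your ``McShane placeholder / net'' remark is hand-waving at exactly this step, and it is not routine: this lemma is what replaces Ball's notion of approximate convexity. Your observation that the Ces\`aro average is handled by summing the Markov type inequality over $s\le t$ is correct and is indeed how the term $S_2$ is bounded in the paper, but without the duality reduction and Lemma~\ref{lem:CN} the argument does not go through.
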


By combining Theorem~\ref{thm:general keith} with Theorem~\ref{thm:bary-cotype} we deduce the following Lipschitz extension result.

\begin{corollary}\label{eq:extension bary}
Fix $p,K,\Gamma\in [1,\infty)$. Suppose that $(X,d_X)$ is a metric space of Markov type $p$ and that $(Y,d_Y)$ is a metric space that is $W_p$ barycentric with constant $\Gamma$ and also $p$-barycentric with constant $K$. Suppose that $Z\subseteq X$ and $f:Z\to Y$ is Lipschitz. Then for every finite subset $S\subseteq X$ there exists $F:S\to Y$ with $F|_{S\cap Z}=f|_{S\cap Z}$ and
\begin{equation*}
\|F\|_{\Lip} \lesssim \Gamma^2K M_p(X)\|f\|_{\Lip}.
\end{equation*}
\end{corollary}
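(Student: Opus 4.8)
The plan is to deduce this corollary by simply chaining together Theorem~\ref{thm:general keith} with Theorem~\ref{thm:bary-cotype}; all the genuine work has already been (or will be) carried out in the proofs of those two results, in Section~\ref{sec:ext} and Section~\ref{sec:cotype proof} respectively, so the argument here is purely formal.

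First I would apply Theorem~\ref{thm:bary-cotype} to the target space. By hypothesis $(Y,d_Y)$ is $W_p$ barycentric with constant $\Gamma$ and $p$-barycentric with constant $K$, and, per the standing convention of this paper, both of these properties are witnessed by one and the same barycenter map $\B:\P_Y\to Y$; this is exactly the setup of Theorem~\ref{thm:bary-cotype}, which therefore guarantees that $(Y,d_Y)$ has metric Markov cotype $p$ with $N_p(Y)\lesssim \Gamma K$.

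With this bound in hand, $(X,d_X)$ has Markov type $p$ and $(Y,d_Y)$ is a $W_p$ barycentric (constant $\Gamma$) metric space of metric Markov cotype $p$, so the hypotheses of Theorem~\ref{thm:general keith} are satisfied. Applying that theorem to $f:Z\to Y$ and to the given finite set $S\subseteq X$ produces $F:S\to Y$ with $F|_{S\cap Z}=f|_{S\cap Z}$ and
$$
\|F\|_{\Lip}\lesssim \Gamma M_p(X)\,N_p(Y)\,\|f\|_{\Lip}\lesssim \Gamma M_p(X)\cdot \Gamma K\,\|f\|_{\Lip}=\Gamma^2 K\, M_p(X)\,\|f\|_{\Lip},
$$
which is precisely the asserted estimate. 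There is no real obstacle beyond the two theorems being invoked; the only point demanding care is that Definition~\ref{def:Wp} and Definition~\ref{def:p} be realized simultaneously by a single barycenter map, which is guaranteed by the hypothesis of the corollary and is exactly what makes the passage through Theorem~\ref{thm:bary-cotype} legitimate.
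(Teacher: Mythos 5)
Your proposal is correct and coincides with the paper's own derivation: the corollary is stated immediately after the remark that it follows ``by combining Theorem~\ref{thm:general keith} with Theorem~\ref{thm:bary-cotype},'' which is exactly the chain you carry out, including the bound $N_p(Y)\lesssim \Gamma K$ and the observation that both barycentric hypotheses refer to the same barycenter map. Nothing further is needed.
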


In~\cite{Ball} Ball obtained the conclusion of Theorem~\ref{thm:general keith} when $Y$ is a Banach
space, under the assumption that it satisfies a certain {\em linear}
invariant that he called Markov cotype $2$. He also proved that
Banach spaces that admit an equivalent norm whose modulus of uniform
convexity has power type $2$ satisfy this assumption.

In~\cite[Sec.~6]{Ball}, Ball proposed a way to define Markov cotype
$2$ for metric spaces: he first defined a bi-Lipschitz invariant of
metric spaces that he called ``approximate convexity", and for
approximately convex metric spaces he defined a notion of metric Markov
cotype which is the same as~\eqref{def:markov cotype}, except that
in the right hand side of~\eqref{def:markov cotype} the Ces\`aro
average of $A$ is replaced by a certain Green's matrix corresponding
to $A$. The precise formulation of these concepts is recalled in
Section~\ref{sec:ball's cotype}, where we show that Ball's notion of
metric Markov cotype coincides with the notion of metric Markov cotype as
in Definition~\ref{def:markov cotype}. Our contribution here is to
show that Ball's strategy yields the desired Lipschitz extension
result, with the following differences: the $W_p$ barycentric
condition is used in a key duality step (Lemma~\ref{lem:ball 1.1}
below), and Lemma~\ref{lem:CN} below removes the need to use the
notion of approximate convexity. Other than these changes and
some expository simplifications, Section~\ref{sec:ext} is nothing
more than a realization of Ball's plan as he
 originally envisaged it.

Theorem~\ref{thm:general keith} yields an extension of $f$ to
finitely many additional points, with a bound on the Lipschitz
constant that is independent of the number of the additional points. This result is the
main geometric content of the Lipschitz extension phenomenon studied
here, but using standard arguments one can formally deduce from
Theorem~\ref{thm:general keith} bona fide solutions of the Lipschitz
extension problem.

Specifically, let $I$ denote the set of all finite subsets of $X$
and let $\U$ be a free ultrafilter on $I$. Denoting by $Y_\U$ the
associated ultrapower of $Y$ (see~\cite{KL95} for background on
ultrapowers of metric spaces), $Y$ is canonically embedded in $Y_\U$
and it follows formally from Theorem~\ref{thm:general keith} that
there exists a mapping $\Phi:X\to Y_\U$ that extends $f$ and
satisfies~\eqref{eq:extension lip bound}. If for some $\lambda\in
[1,\infty)$ there were a $\lambda$-Lipschitz retraction from $Y_\U$
onto $Y$, then by composing $\Phi$ with this retraction we would deduce
that
\begin{equation}\label{eq:lambda retract}
e(X,Y)\lesssim \lambda\Gamma M_p(X)N_p(Y).
\end{equation}

If $Y$ is Banach space then $Y_\U$ is also a Banach space, and, as
proved in~\cite{Hei80}, it follows from the principle of local
reflexivity~\cite{LR69,JRZ71} that there is a linear isometry
$T:Y^{**} \to Y_\U$ such that $T(Y^{**})$ contains the canonical
image of $Y$ in $Y_\U$, and there is a norm $1$ projection of $Y_\U$
onto $T(Y^{**})$. It therefore follows from Theorem~\ref{thm:general
keith} that $$e(X,Y^{**})\lesssim \Gamma M_p(X)N_p(Y).$$ If in
addition there is a $\lambda$-Lipschitz retraction from $Y^{**}$
onto $Y$ then it would follow that~\eqref{eq:lambda retract} holds
true.

It is a long standing open problem whether for every separable
Banach space $Y$ there is a Lipschitz retraction from $Y^{**}$ onto
$Y$, but in the nonseparable setting it has been recently proved by
Kalton~\cite{Kal11} that this need not hold true. A dual Banach
space is always canonically norm $1$ complemented in its bi-dual,
and in~\cite[Sec.~5]{Kal12} Kalton proved that if $Y$ either has an
unconditional finite dimensional decomposition (UFDD) or is a
separable order continuous Banach lattice then there is a Lipschitz
retraction from $Y^{**}$ onto $Y$.

If $Y$ is a complete $CAT(0)$ metric space then so is $Y_\U$, and
moreover $Y$ is a closed convex subset of $Y_\U$. In this case there
is a $1$-Lipschitz retraction from $Y_\U$ onto $Y$ (the nearest
point map); see~\cite[Ch.~II.2]{BH-book}.

The above discussion yields a variety of target spaces $Y$ for which
the assumptions of Theorem~\ref{thm:general keith} implies that
$e(X,Y)<\infty$. We single out in particular the following statement.

\begin{corollary}\label{coro:bona fide lip ext}
Under the assumptions of Theorem~\ref{thm:general keith}, if in
addition $Y$ is a dual Banach space  then $e(X,Y)\lesssim
M_p(X)N_p(Y)$. If $p=2$ and $Y$ is a complete $CAT(0)$ metric space then
$e(X,Y)\lesssim M_2(X)$.
\end{corollary}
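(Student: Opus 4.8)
The plan is to upgrade the finite-support statement of Theorem~\ref{thm:general keith} to a genuine extension over all of $X$ by the standard ultrapower device, and then to land back inside $Y$ itself by composing with a suitable $1$-Lipschitz retraction. Both target classes appearing in the statement are $W_p$ barycentric with constant $\Gamma=1$: a normed space via the vector-valued integral barycenter $\B(\mu)=\int_X x\,d\mu(x)$, and a complete $CAT(0)$ space via its $CAT(0)$ barycenter. Hence in both cases Theorem~\ref{thm:general keith} gives, for every $Z\subseteq X$, every Lipschitz $f:Z\to Y$ and every finite $S\subseteq X$, an extension $F_S:S\to Y$ of $f|_{S\cap Z}$ with $\|F_S\|_{\Lip}\lesssim M_p(X)N_p(Y)\|f\|_{\Lip}$, a bound independent of $S$.

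First I would fix $Z$ and a Lipschitz $f:Z\to Y$ (assuming $Z\neq\emptyset$ and fixing a basepoint $z_0\in Z$), let $I$ be the set of finite subsets of $X$, and let $\U$ be a free ultrafilter on $I$ containing every tail $\{S'\in I:\ S\subseteq S'\}$; these tails have the finite intersection property, so such a $\U$ exists. Extending each $F_S$ to all of $X$ by setting $F_S(x)=f(z_0)$ for $x\notin S$ makes each family $(F_S(x))_{S\in I}$ bounded (since $d_Y(F_S(x),f(z_0))\le\|F_S\|_{\Lip}d_X(x,z_0)$), so the ultralimit $\Phi(x)\eqdef[(F_S(x))_{S\in I}]$ is a well-defined point of $Y_\U$. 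As $\U$-almost every $S$ contains any two prescribed points $x,x'$, the map $\Phi:X\to Y_\U$ is Lipschitz with $\|\Phi\|_{\Lip}\lesssim M_p(X)N_p(Y)\|f\|_{\Lip}$, and for $z\in Z$ eventually $F_S(z)=f(z)$, so $\Phi$ extends $f$ under the canonical isometric embedding $Y\hookrightarrow Y_\U$.

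It then remains to retract $Y_\U$ back onto $Y$. If $Y=E^{*}$ is a dual Banach space, I would invoke Heinrich's form~\cite{Hei80} of the principle of local reflexivity~\cite{LR69,JRZ71}: there is a linear isometry $T:Y^{**}\to Y_\U$ onto a subspace containing the canonical copy of $Y$, together with a norm-$1$ linear projection $P:Y_\U\to T(Y^{**})$; composing $\Phi$ with $P$, with $T^{-1}$, and with the canonical norm-$1$ projection $Y^{**}=E^{***}\to E^{*}=Y$ (the adjoint of $E\hookrightarrow E^{**}$) gives a map $X\to Y$ that still extends $f$ and has Lipschitz constant $\lesssim M_p(X)N_p(Y)$, whence $e(X,Y)\lesssim M_p(X)N_p(Y)$ since $Z,f$ were arbitrary. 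If $p=2$ and $Y$ is a complete $CAT(0)$ metric space, then $Y_\U$ is again a complete $CAT(0)$ space in which the canonical copy of $Y$ is a closed convex subset (see~\cite[Ch.~II.2]{BH-book}), so the nearest-point projection $\rho:Y_\U\to Y$ is a genuine $1$-Lipschitz retraction; composing $\Phi$ with $\rho$ produces the extension, and Theorem~\ref{thm:bary-cotype} applied with $p=2$ and $\Gamma=K=1$ gives $N_2(Y)\lesssim 1$, so $e(X,Y)\lesssim M_2(X)$. I do not expect a genuine obstacle here — the essential geometric content is already in Theorem~\ref{thm:general keith}, and the remainder is the by-now-routine ``finite subsets $+$ ultrafilter $+$ Lipschitz retraction'' packaging also sketched in the discussion preceding the statement; the only points needing care are anchoring the partial extensions to a common basepoint so the ultralimit lives in $Y_\U$, and verifying that all the projections used in the last step fix the canonical copy of $Y$ pointwise, so that the final map truly extends $f$.
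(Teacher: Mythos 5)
Your proposal is correct and follows essentially the same route as the paper, which deduces the corollary in the discussion immediately preceding its statement: the finite-set extensions from Theorem~\ref{thm:general keith} are glued via an ultrafilter on the finite subsets of $X$ to get $\Phi:X\to Y_\U$, and one retracts back onto $Y$ using Heinrich's local reflexivity argument plus the norm-$1$ complementation of a dual space in its bidual, respectively the nearest-point projection in the $CAT(0)$ case. The points you flag (anchoring at a basepoint so the ultralimit is well defined $\U$-almost everywhere, and checking the projections fix the canonical copy of $Y$) are exactly the routine verifications the paper leaves implicit.
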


The Markov type of several important classes of metric spaces
has been computed in the literature, and when one takes $(X,d_X)$ to
be one of those spaces Corollary~\ref{coro:bona fide lip ext}
becomes a versatile Lipschitz extension theorem that encompasses a
wide range of seemingly disparate Lipschitz extension results, that
have been previously proved mostly via completely different methods.

Specifically, in~\cite{NPSS06} it was proved that Banach spaces that
admit an equivalent norm whose modulus of uniform smoothness has
power type $p$ have Markov type $p$. It was also proved
in~\cite{NPSS06} that trees, hyperbolic groups, complete simply
connected Riemannian manifolds of pinched sectional curvature and
Laakso graphs all have Markov type $2$, and that spaces that admit a
padded random partition (see~\cite{LN-extension}), in particular
doubling metric spaces and planar graphs, have Markov type $p$ for
all $p\in (0,2)$. In~\cite{BKL07} it was shown that series parallel
graphs have Markov type $2$, and finally in the recent
work~\cite{DLP12} it was shown that spaces that admit a padded
random partition have Markov type $2$. Thus, in particular, doubling
spaces and planar graphs have Markov type $2$. In~\cite{NS11} it was
shown that spaces with finite Nagata dimension admit a padded random
partition, and so by~\cite{DLP12} they too have Markov type $2$.
In~\cite{Oht09-markov} it was shown that Aleksandrov spaces of
nonnegative curvature have Markov type $2$, and in~\cite{ANN10} the
Markov type of certain Wasserstein spaces was computed.

In light of these results, taking as an example the case when
$(Y,d_Y)$ is a Hadamard space in Corollary~\ref{coro:bona fide lip
ext}, we see that if $(X,d_X)$ is a doubling space, planar graph, or
a space with finite Nagata dimension, then $e(X,Y)$ is finite. These
results were previously proved in~\cite{LN-extension} via the method
of random partitions (Lipschitz extension for spaces of bounded
Nagata dimension was previously treated in~\cite{LS05} and only
later it was shown in~\cite{NS11} that they admit a padded random
partition and therefore the corresponding extension results are a
special case of~\cite{LN-extension}). It also follows that if
$(X,d_X)$ has nonnegative curvature in the sense of Aleksandrov and
$(Y,d_Y)$ is a Hadamard space then $e(X,Y)\lesssim 1$, a result that
has been previously proved in~\cite{LS97}, as a special case of an
elegant generalization of the classical Kirszbraun extension
theorem~\cite{Kirsz34}.

Given a metric space $(X,d_X)$ and $\alpha\in (0,1]$ let $X^\alpha$
denote the metric space $(X,d_X^\alpha)$. By the triangle inequality
$X^\alpha$ has Markov type $p$ with constant $1$ for every $p\in
(1,1/\alpha]$. It therefore follows from  the above discussion that
$e(X^\alpha,Y)<\infty$ for every metric space $X$, provided that
 $Y$ has metric Markov cotype $p\in (1,1/\alpha]$ and
there is a Lipschitz retraction from $Y_{\mathscr{U}}$ onto $Y$. In
particular, every $1/2$-H\"older mapping from a subset of a metric
space $X$ into a Hadamard space $Y$ can be extended to a $Y$-valued
$1/2$-H\"older mapping defined on all of $X$; this statement was
previously known when $Y$ is a Hilbert space due to the work of
Minty~\cite{Min70}. One can state several additional examples of
this type, but we single out only one more special case of
Corollary~\ref{coro:bona fide lip ext} that does not seem to follow
from previously known theorems: if $X$ is a Banach space whose
modulus of smoothness has power type $2$ (thus by~\cite{NPSS06} $X$ has Markov type $2$), e.g. $X$ can be an
$L_p(\mu)$ space or the Schatten trace class $S_p$ for $p\in
[2,\infty)$, and $Y$ is a Hadamard space, then $e(X,Y)<\infty$.

\subsection{On a construction of Kalton}

Kalton recently used his ``method of sections" to obtain several
striking results on the nonlinear geometry of Banach spaces. Using
Kalton's beautiful work in~\cite{Kal12}, we prove the following
result in Section~\ref{sec:kalton}.

\begin{theorem}\label{thm:no cotype}
There exists a closed linear subspace of $\ell_1$ that fails to
have metric Markov cotype $p$ for every $p\in (0,\infty)$.
\end{theorem}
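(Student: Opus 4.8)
The plan is to argue by contraposition, deducing the failure of metric Markov cotype from a failure of Lipschitz extension that is latent in Kalton's work. The structural observation driving everything is that a closed linear subspace $X\subseteq \ell_1$ is, in particular, a normed space, hence is $W_1$ barycentric with constant $1$ via the mean barycenter map $\B(\mu)=\int_X x\,d\mu(x)$, and therefore $W_p$ barycentric with constant $1$ for every $p\in [1,\infty)$. Consequently, if $X$ were to have metric Markov cotype $p$ with a finite constant $N_p(X)$ for some $p\in [1,\infty)$, then Theorem~\ref{thm:general keith}, applied with target $Y=X$ and $\Gamma=1$, would assert that for every metric space $(Z,d_Z)$ of Markov type $p$, every $W\subseteq Z$, and every Lipschitz $g:W\to X$, the map $g$ extends to each finite $S\subseteq Z$ with Lipschitz constant at most a universal multiple of $M_p(Z)N_p(X)\|g\|_{\Lip}$; in particular $e(Z,W,X)\lesssim M_p(Z)N_p(X)$ for every finite metric space $Z$ of Markov type $p$, a bound that is independent of the cardinality of $Z$.

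The second ingredient is Kalton's construction from~\cite{Kal12}: the method of sections produces a closed subspace $X\subseteq \ell_1$ that witnesses a strong failure of Lipschitz extension. What is needed here, and what I would extract from~\cite{Kal12} in quantitative form, is a sequence of finite metric spaces $(Z_k,d_{Z_k})$, subsets $W_k\subseteq Z_k$, and $1$-Lipschitz maps $g_k:W_k\to X$ such that $e(Z_k,W_k,X)/M_p(Z_k)\to\infty$ as $k\to\infty$ (an explicit divergence rate, matched against the Johnson--Lindenstrauss--Benyamini upper bound, is what yields the asserted asymptotic sharpness of the latter, but for the present qualitative statement one only needs divergence). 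In the cleanest situation the $Z_k$ can be taken to have Markov type $2$ with an absolute constant — for instance realized as finite trees or as subsets of a fixed Hilbert space — so that for every $p\in [1,2]$ they have Markov type $p$ with an absolute constant, and the first paragraph forces $e(Z_k,W_k,X)\lesssim M_p(Z_k)N_p(X)\lesssim N_p(X)$, contradicting $e(Z_k,W_k,X)\to\infty$; hence $X$ fails metric Markov cotype $p$ for every $p\in [1,2]$. The exponents $p>2$ are handled by feeding into Theorem~\ref{thm:general keith} suitable snowflaked source spaces in place of the $Z_k$ (no sequence of metric spaces containing arbitrarily long geodesic segments can have uniformly bounded Markov type $p$ once $p>2$), and the exponents $p\in (0,1)$ are dealt with by a similar snowflaking argument, using that metric Markov cotype $p$ of $(X,d_X)$ is equivalent to metric Markov cotype $1$ of the snowflake $(X,d_X^p)$.

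The crux of the argument is compressed into the phrase ``extract from~\cite{Kal12} in quantitative form''. One has to unwind Kalton's sectioned construction to produce the concrete family $(Z_k,W_k,g_k)$; to verify that the source spaces can be taken with Markov type $p$ controlled uniformly in $k$, which for $p>2$ means passing to snowflakes and checking that snowflaking the source — which genuinely alters the Lipschitz-extension problem — does not destroy the obstruction; and, above all, to prove the uniform lower bound $e(Z_k,W_k,X)\to\infty$, i.e.\ that \emph{every} extension of $g_k$ to $Z_k$ must have large Lipschitz constant. This last point is where Kalton's method does the real work and is the main obstacle; once his obstruction is rendered quantitative in this form, the deduction above of the failure of metric Markov cotype is routine.
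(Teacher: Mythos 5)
Your high-level reduction is exactly the one the paper uses: assume $N_p(X)<\infty$, feed a source space of controlled Markov type $p$ into the generalized Ball extension theorem (with $\Gamma=1$ since a linear subspace of $\ell_1$ is $W_p$ barycentric via the mean), and contradict a quantitative non-extendability statement extracted from Kalton's construction; the snowflaking of the source to reach exponents $p>2$ is also precisely what the paper does (it uses $(\ell_2,\|\cdot\|_2^{2/p})$, which has Markov type $p$ with constant $1$ because $M_2(\ell_2)=1$). However, as you yourself concede, the entire mathematical content is compressed into ``extract from~\cite{Kal12} in quantitative form,'' and that is not a minor omission: it is the theorem. The paper must (i) build the concrete targets $Y_\theta^n\subseteq \ell_1^{5^n}\oplus\ell_2^n$ via a quotient map $Q_n:\ell_1^{5^n}\to\ell_2^n$ with $B_2^n\supseteq Q_n(B_1^{5^n})\supseteq\frac12 B_2^n$ and a symmetric section $\phi_n$; (ii) prove the H\"older upper bound for $f_\theta^n$ on a $1/\sqrt[4]{n}$-separated net $A_n\subseteq S^{n-1}$; and (iii) prove that any extension to all of $S^{n-1}$ has large constant, which is done by a concentration-of-measure estimate on the sphere combined with the Pietsch domination theorem and Grothendieck's inequality applied to $Q_n$. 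None of this appears in your write-up, so the proposal is an outline of the reduction rather than a proof. Note also that the obstruction in the paper genuinely lives on the whole sphere (the lower bound integrates $\|Q_n(\psi(x))\|_2$ against Haar measure), so producing \emph{finite} source spaces $Z_k$ with $e(Z_k,W_k,X)\to\infty$ requires an extra (routine but unstated) compactness step.

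A second, more localized gap concerns $p\in(0,2)$, and in particular $p\in(0,1)$. Your proposed equivalence between metric Markov cotype $p$ of $(X,d_X)$ and metric Markov cotype $1$ of $(X,d_X^p)$ is formally correct, but it does not help: to run your contrapositive for the snowflaked target you would need $(X,d_X^p)$ to be $W_1$ barycentric with controlled constant, and the linear barycenter map is not Lipschitz for the Wasserstein metric built from $d_X^p$, so Theorem~\ref{thm:general keith} is not applicable (and for $p<1$ the extension machinery is stated only for $p\ge1$ in any case). The paper sidesteps this entirely: it shows by a direct random-walk computation that $\R$ itself fails metric Markov cotype $p$ for every $p\in(0,2)$, and then transfers this to every Banach space using that $\R$ is a $1$-absolute Lipschitz retract together with the fact that metric Markov cotype passes to Lipschitz retracts. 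That retraction lemma is also what lets the paper assemble the finite-dimensional examples $Y_{2/n}^{2^{n^2}}$ into a single closed subspace of $\ell_1$ (an $\ell_1$-direct sum, with $\ell_2^k$ embedded into $\ell_1$ via Dvoretzky), another assembly step your proposal does not address.
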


Much of the impetus for research on bi-Lipschitz invariants stems from the search for nonlinear formulations of key concepts in Banach space theory; see the surveys~\cite{Bal12,Nao12} and the references therein for more on this program. In particular, the use of the term ``cotype" in Definition~\ref{def:Mcotype} arises from an analogy with the Banach space notion of {\em Rademacher cotype} (see e.g.~\cite{Mau03}). $\ell_1$, and hence all of its linear subspaces, has Rademacher cotype $2$, so Theorem~\ref{thm:no cotype} shows that for Banach spaces metric Markov cotype and Rademacher cotype are different notions. Nevertheless, it would be very interesting to understand the metric Markov cotype of $\ell_1$ itself rather than its closed subspaces (note that, due to the existential quantifier in Definition~\ref{def:Mcotype}, metric Markov cotype is not trivially inherited by subspaces).

\begin{question}\label{Q:does l1 have cotype}
Does $\ell_1$ have metric Markov cotype $2$? Less ambitiously, does $\ell_1$ have metric Markov cotype $p$ for some $p\in [2,\infty)$?
\end{question}

If $\ell_1$ had metric Markov cotype $2$ then it would follow from
Corollary~\ref{coro:bona fide lip ext} that
$e(\ell_2,\ell_1)<\infty$. Whether or not $e(\ell_2,\ell_1)$ is
finite is a long-standing open question that was asked by Ball
in~\cite{Ball}; see~\cite{MM10} for algorithmic ramifications of
this important question.

The proof of Theorem~\ref{thm:no cotype} yields the following quantitative statement. For every $n\in \N$ there exists an $n$-dimensional subspace $Z_n$ of $\ell_1$ such that
\begin{equation}\label{eq:N2 Z_n lower}
N_2(Z_n)\gtrsim \sqrt[4]{\log n}.
\end{equation}
Any $n$-dimensional subspace $X$ of $\ell_1$ satisfies
$N_2(X)\lesssim \sqrt{\log n}$. Indeed, by~\cite{Tal90} we know that
$X$ is $2$-isomorphic to a subspace of $\ell_1^k$, with $k\lesssim
n\log n$ (for our purpose  we can also use the weaker bound on $k$
of~\cite{Sch87}). By H\"older's inequality $\ell_1^k$ is
$O(1)$-isomorphic to a subspace of $\ell_p$ with $p=1+1/\log k$, so
the desired upper bound on $N_2(X)$ follows
from~\cite{Ball,MN-towards}. We ask whether~\eqref{eq:N2 Z_n lower}
can be sharpened.

\begin{question}
Is it true that for arbitrarily large $n\in \N$ there exists an $n$-dimensional subspace $X$ of $\ell_1$ with $N_2(X)\gtrsim \sqrt{\log n}$?
\end{question}

An interesting byproduct of our quantitative analysis of Kalton's
construction is that it shows for the first time that an old
Lipschitz extension result of Benyamini, Johnson and
Lindenstrauss~\cite{JL84} cannot be improved. Given $\e\in (0,1)$
and spaces $(X,d_X),(Y,d_Y)$, denote
\begin{multline*}
e_\e(X,Y)\\\eqdef \sup\left\{e(X,S,Y):\ S\subseteq X,\ \mathrm{and}\ \inf_{\substack{x,y\in S\\x\neq y}}d_X(x,y)\ge \e\diam(S)\right\},
\end{multline*}
where $\diam(S)=\sup_{x,y\in S} d_X(x,y)$ is the diameter of $S$. In
other words, we are interested in the extension of $Y$-valued
Lipschitz functions from $\e$-separated subsets of $X$, where
$\e$-separated means that all positive distances in the subset are
at least an $\e$-fraction of its diameter.

In~\cite{JL84} it was shown that for every $\e\in (0,1)$, every metric space $(X,d_X)$ and every Banach space $(Y,d_Y)$ we have
\begin{equation}\label{eq:JLB}
e_\e(X,Y)\lesssim \frac{1}{\e}.
\end{equation}
Specifically, a first proof of~\eqref{eq:JLB} was given by Johnson and Lindenstrauss in~\cite{JL84} when $Y$ is a Hilbert space, and in the appendix of the same paper Johnson and Lindenstrauss include a different argument that was subsequently found by Benyamini establishing~\eqref{eq:JLB} when $Y$ is a general Banach space. A very short proof of~\eqref{eq:JLB} was later found by Johnson, Lindenstrauss and Schechtman~\cite{JLS86}.

Johnson and Lindenstrauss proved~\cite{JL84} that
$e_\e(\ell_1,\ell_2)\gtrsim 1/\sqrt[4]{\e}$. Constructions of
Johnson, Lindenstrauss and Schechtman~\cite{JLS86} and
Lang~\cite{Lan99} yield the estimate $e_\e
(\ell_\infty,\ell_2)\gtrsim 1/\sqrt{\e}$. Here we show
that~\eqref{eq:JLB} is sharp up to absolute constant factors, even
when $X$ is Hilbert space and $Y$ is an appropriately chosen closed
subspace of $\ell_1$.

\begin{theorem}\label{thm:aspect}
There exists a closed subspace $Y$ of $\ell_1$ that satisfies $e_\e(\ell_2,Y)\gtrsim 1/\e$ for every $\e\in (0,1)$. Specifically, for every $n\in \N$ there exists a $5^n$-dimensional subspace $Y_n$ of $\ell_1$ and a $1/\sqrt[4]{n}$ net $\mathcal{N}$ of the unit ball of $\ell_2^n$ such that $e(\ell_2^n, \mathcal{N},Y_n)\gtrsim \sqrt[4]{n}$.
\end{theorem}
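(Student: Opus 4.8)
The plan is to reduce the first (global) assertion to the ``Specifically'' assertion, and then to prove the latter by turning the quantitative failure of metric Markov cotype underlying Theorem~\ref{thm:no cotype} into a Lipschitz extension obstruction. For the reduction, set $Y\eqdef\bigl(\bigoplus_{n=1}^{\infty}Y_n\bigr)_{\ell_1}$, a closed subspace of $\ell_1$ in which each $Y_n$ is the range of a norm-one linear projection $P_n$. If $f\colon\mathcal N_n\to Y_n\subseteq Y$ is $1$-Lipschitz and $F\colon\ell_2\to Y$ extends $f$, then $P_n\circ F|_{\ell_2^n}\colon\ell_2^n\to Y_n$ also extends $f$, with Lipschitz constant at most $\|F\|_{\Lip}$, so $\|F\|_{\Lip}\gtrsim e(\ell_2^n,\mathcal N_n,Y_n)\gtrsim\sqrt[4]{n}$. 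A $1/\sqrt[4]{n}$-net of the unit ball of $\ell_2^n$ can be chosen $1/\sqrt[4]{n}$-separated, hence $\e_n$-separated with $\e_n$ of order $1/\sqrt[4]{n}$ relative to its diameter, so taking $n$ of order $\e^{-4}$ yields $e_\e(\ell_2,Y)\gtrsim 1/\e$ for small $\e$ (the bound being vacuous for $\e$ bounded away from $0$). It thus remains to prove the ``Specifically'' part; fix $n$ and put $m\eqdef 5^n$.

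The crux is to extract from Kalton's construction in~\cite{Kal12} an $m$-dimensional subspace $Y_n\subseteq\ell_1$ together with a witness to the failure of its metric Markov cotype of the following shape: there are a $1/\sqrt[4]{n}$-net $\mathcal N$ of the unit ball of $\ell_2^n$, a $1$-Lipschitz map $f\colon\mathcal N\to Y_n$, a probability vector $\pi$ on $\mathcal N$, a stochastic matrix $A=(a_{vw})$ reversible relative to $\pi$ (essentially the standard random walk joining net points at Euclidean distance of order $1/\sqrt[4]{n}$), an integer $t$, and $\beta_n$ of order $\sqrt[4]{n}$, such that, writing
\[
R\eqdef\sum_{v,w\in\mathcal N}\pi_v\left(\frac1t\sum_{s=1}^{t}A^s\right)_{vw}\|f(v)-f(w)\|_1^2,\qquad R'\eqdef\sum_{v,w\in\mathcal N}\pi_v\left(\frac1t\sum_{s=1}^{t}A^s\right)_{vw}\|v-w\|_2^2,
\]
one has: (a) for \emph{every} $\{z_v\}_{v\in\mathcal N}\subseteq Y_n$,
\[
\sum_{v\in\mathcal N}\pi_v\|f(v)-z_v\|_1^2+t\sum_{v,w\in\mathcal N}\pi_va_{vw}\|z_v-z_w\|_1^2>\beta_n^2\,R;
\]
and (b) $R\gtrsim R'$ (the reverse inequality $R\le R'$ being automatic, as $f$ is $1$-Lipschitz on $\mathcal N\subseteq\ell_2^n$). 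Informally, along the Ces\`aro-averaged walk $\tfrac1t\sum_{s=1}^{t}A^s$ the embedding $f$ distorts Euclidean distances by at most a constant factor, while no ``smoothing'' of the configuration $\{f(v)\}_{v\in\mathcal N}$ inside $Y_n$ in the sense of~\eqref{def:markov cotype} is possible; note that (a) already forces $N_2(Y_n)\ge\beta_n$, consistently with the quantitative form of Theorem~\ref{thm:no cotype} (here $m=5^n$, so $\sqrt[4]{\log m}$ is of order $\sqrt[4]{n}$).

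Granting this package, the extension bound is quick. Suppose $F\colon\ell_2^n\to Y_n$ extends $f$ with $\|F\|_{\Lip}=L$. A Hilbert space is $W_2$ barycentric with constant $1$ and $2$-barycentric with constant $1$ (via the mean), so Theorem~\ref{thm:bary-cotype} gives $N_2(\ell_2^n)\lesssim 1$; applying~\eqref{def:markov cotype} to the configuration $\{v\}_{v\in\mathcal N}\subseteq\ell_2^n$ with the $A,\pi,t$ above produces $\{\bar v\}_{v\in\mathcal N}\subseteq\ell_2^n$ with
\[
\sum_{v\in\mathcal N}\pi_v\|v-\bar v\|_2^2+t\sum_{v,w\in\mathcal N}\pi_va_{vw}\|\bar v-\bar w\|_2^2\lesssim R'.
\]
Put $z_v\eqdef F(\bar v)\in Y_n$. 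Since $f(v)=F(v)$ for $v\in\mathcal N$ and $\|F\|_{\Lip}=L$, the left-hand side of (a) for this choice of $z$ is at most $L^2$ times the left-hand side of the previous display, hence $\lesssim L^2R'$. Combining with (a) and then (b), $L^2R'\gtrsim\beta_n^2R\gtrsim\beta_n^2R'$, and since $R'>0$ we cancel it to obtain $L\gtrsim\beta_n$, a quantity of order $\sqrt[4]{n}$. As $\|f\|_{\Lip}=1$, this is precisely $e(\ell_2^n,\mathcal N,Y_n)\gtrsim\sqrt[4]{n}$, completing the ``Specifically'' assertion.

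I expect the genuinely hard step to be the structural package (a)+(b): proving that Kalton's witness to the failure of metric Markov cotype for his subspace of $\ell_1$ can be carried by a bona fide net of a Euclidean ball, with the embedding $f$ into $\ell_1$ simultaneously $1$-Lipschitz on that net and, on the Ces\`aro-averaged scale, bi-Lipschitz with constant of order $1$. This should require unpacking the recursive definition of the relevant $\ell_1$-subspace in~\cite{Kal12} and tracking, level by level, both its $\ell_1$-geometry and a compatible Euclidean model, while bookkeeping the $\sqrt[4]{n}$ quantitative loss and the $5^n$ dimension bound; once that is in hand, the remaining steps above are soft.
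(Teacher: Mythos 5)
Your reduction of the global statement to the ``Specifically'' part (via the $\ell_1$-direct sum $Y=(\bigoplus_n Y_n)_1$ and the norm-one coordinate projections) is fine, and your final deduction of $L\gtrsim\beta_n$ from the hypothesized package (a)$+$(b) is logically sound. But the proposal has a genuine gap, and you have correctly located it yourself: the entire content of the theorem sits inside the ``structural package'' (a)$+$(b), which you do not prove. Nothing in the paper, and nothing in your sketch, produces an explicit witness $(\pi,A,t)$ to the failure of metric Markov cotype of $Y_n$ that is supported on a Euclidean net, carried by a $1$-Lipschitz map $f$, and satisfies the crucial comparability $R\gtrsim R'$ of Ces\`aro-averaged energies. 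In fact the logical flow of the paper is exactly the reverse of yours: the extension lower bound is proved \emph{first} and \emph{directly}, and the failure of metric Markov cotype (Theorem~\ref{thm:no cotype} and the bound $N_2(Z_n)\gtrsim\sqrt[4]{\log n}$) is then \emph{deduced} from it via the generalized Ball extension theorem (Corollary~\ref{coro:bona fide lip ext}). That deduction yields no information about what a cotype-failure witness looks like, so you cannot recover your package from it; building such a witness from scratch is a genuinely new (and possibly harder) task.

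The paper's actual argument for $e(\ell_2^n,\mathcal N,Y_n)\gtrsim\sqrt[4]{n}$ is Lemma~\ref{lem:f theta holder} plus Lemma~\ref{lem:no holder ext} with $\theta=\tau=1$: the space $Y_1^n$ is the graph $\{(x/n^{1/4},Q_n(x))\}$ of a quotient map $Q_n:\ell_1^{5^n}\to\ell_2^n$, the map $f_1^n(a)=(\phi_n(a)/n^{1/4},a)$ is $O(1)$-Lipschitz on the net $A_n\subseteq S^{n-1}$, and any Lipschitz extension $F$ forces a map $\psi:S^{n-1}\to\ell_1^{5^n}$ for which, on one hand, concentration of measure on the sphere combined with the Pietsch domination theorem and Grothendieck's inequality gives $\int_{S^{n-1}}\|Q_n(\psi(x))\|_2\,d\sigma_{n-1}\lesssim L/n^{1/4}$, while on the other hand the net condition forces $\|Q_n(\psi(x))\|_2\ge 1-L/n^{1/4}$ pointwise; comparing the two yields $L\gtrsim n^{1/4}$. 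None of this machinery appears in your proposal, and without it (or a genuinely new construction establishing (a)$+$(b)) the argument is a strategy rather than a proof.
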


It would be very interesting to understand those pairs of Banach spaces $X, Y$ for which $e_\e(X,Y)=o(1/\e)$ as $\e\to 0$. We do not even know if there exist Banach spaces $X,Y$ such that $e_\e(X,Y)=o(1/\e)$ yet $e(X,Y)=\infty$. Our interest in this natural question is partially motivated by the forthcoming work~\cite{ANN11}, where it is asked whether $e_\e(\ell_1,\ell_1)=o(1/\e)$, and it is shown that
a positive answer to this question would have applications to dimension
reduction in $\ell_1$ (e.g., it is shown in~\cite{ANN11} that
if  $e_\e(\ell_1,\ell_1)=o(1/\e)$ then any $n$-point subset of $\ell_1$ embeds with distortion $O(1)$ into some Banach space of dimension $(\log n)^{O(1)}$). Due to Theorem~\ref{thm:aspect}, one is tempted to believe that in fact $e_\e(\ell_1,\ell_1)\gtrsim 1/\e$, but the present approach does not seem to shed light on this question.

\section{Pisier's martingale inequality in barycentric spaces}\label{sec:Pisier}

Martingales  in metric spaces have been studied for several decades; see e.g.~\cite{Dos62,Eme89,EH99,Sturm02,CS08}. Here we will use a natural notion of martingale in barycentric metric spaces, the main goal being to extend an important martingale inequality of Pisier~\cite{Pisier-martingales} from the setting of uniformly convex Banach spaces to the setting of barycentric metric spaces. This inequality will be used crucially in the proof of Theorem~\ref{thm:bary-cotype}.


Let $\Omega$ be a finite set and $\mu\in \P_\Omega$ be a probability
measure such that $\mu(\omega)>0$ for every $\omega\in \Omega$.
Suppose that $(X,d_X)$ is a metric space and fix a barycenter map
$\B:\P_X\to X$. Let $\F\subseteq 2^\Omega$ be a $\sigma$-algebra. For
every $\omega\in \Omega$ let $\F(\omega)\subseteq \Omega$ be the
unique atom of $\F$ to which $\omega$ belongs. Given an $X$-valued
random variable $Z:\Omega\to X$, its conditional barycenter
$\B(Z|\F):\Omega\to X$ is defined as
\begin{equation}\label{eq:def conditional}
\forall\, \omega\in \Omega,\quad \B(Z|\F)(\omega)\eqdef \B\left(\frac{1}{\mu(\F(\omega))}\sum_{a\in \F(\omega)}\mu(a)\d_{Z(a)}\right).
\end{equation}
If $m\in \N$ and $\{\Omega,\emptyset\}=\F_0\subseteq \F_1\subseteq \ldots\subseteq \F_m\subseteq 2^\Omega$ are increasing $\sigma$-algebras (a filtration) then a sequence of $X$-valued random variables $Z_0,\ldots,Z_n:\Omega\to X$ is said to be a martingale if for every $i\in \{1,\ldots,m\}$ we have $\B(Z_i|\F_{i-1})=Z_{i-1}$. We warn that in contrast to the usual setting of martingales in Banach spaces, this definition does not necessarily imply that $\B(Z_i|\F_j)=Z_j$ for every $j\in \{0,\ldots,i-2\}$. Nevertheless, the above notion of martingale suffices to prove the following inequality.

\begin{lemma}[Pisier's inequality for barycentric spaces]\label{lem:pisier}
Fix $m\in \N$, $p,K\in [1,\infty)$ and a metric space $(X,d_X)$ that is $p$-barycentric with constant $K$. Let $\Omega$ be a finite set and fix $\mu\in \P_\Omega$ with $\mu(\omega)>0$ for every $\omega\in \Omega$. Suppose that $\{\Omega,\emptyset\}=\F_0\subseteq \F_1\subseteq \ldots\subseteq \F_m\subseteq 2^\Omega$ is a filtration with respect to which $Z_0,\ldots,Z_n:\Omega\to X$ is an $X$-valued martingale. Then for every $z\in X$ we have
\begin{equation}\label{eq:pisier bary}
K^pd_X(Z_0,z)^p+\sum_{t=0}^{m-1} \int_\Omega d_X(Z_{t+1},Z_t)^p d\mu\le K^p\int_\Omega d_X(Z_m,z)^pd\mu.
\end{equation}
\end{lemma}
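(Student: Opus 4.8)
The plan is to telescope the $p$-barycentric inequality \eqref{eq:def p bary} along the filtration, one step at a time, and to use the martingale property $\B(Z_{t+1}\mid \F_t)=Z_t$ together with the fact that the $p$-barycentric inequality can be applied \emph{atom by atom}. Fix $t\in\{0,\ldots,m-1\}$ and fix an atom $a$ of $\F_t$. On $a$ the random variable $Z_t$ is constant, equal to some point $x_a\in X$, and the conditional law of $Z_{t+1}$ restricted to $a$ is the probability measure $\mu_a\eqdef \frac{1}{\mu(a)}\sum_{\omega\in a}\mu(\omega)\d_{Z_{t+1}(\omega)}\in\P_X$; by the martingale hypothesis and \eqref{eq:def conditional} we have $\B(\mu_a)=x_a$. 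Applying Definition~\ref{def:p} (with $\mu$ there taken to be $\mu_a$) to any point $z\in X$ on the left and using $\B(\mu_a)=x_a=Z_t$ gives
\begin{equation}\label{eq:atomic step}
d_X(z,Z_t)^p+\frac{1}{K^p}\int_X d_X(Z_t,y)^p\,d\mu_a(y)\le \int_X d_X(z,y)^p\,d\mu_a(y),
\end{equation}
which, after unpacking $\mu_a$ and multiplying through by $\mu(a)$, reads
\begin{equation}\label{eq:atomic step2}
\mu(a)d_X(z,Z_t)^p+\frac{1}{K^p}\sum_{\omega\in a}\mu(\omega)d_X(Z_t(\omega),Z_{t+1}(\omega))^p\le \sum_{\omega\in a}\mu(\omega)d_X(z,Z_{t+1}(\omega))^p.
\end{equation}

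Next I would sum \eqref{eq:atomic step2} over all atoms $a$ of $\F_t$. Since the atoms partition $\Omega$, the right-hand side becomes $\int_\Omega d_X(z,Z_{t+1})^p\,d\mu$, the middle term becomes $\frac{1}{K^p}\int_\Omega d_X(Z_t,Z_{t+1})^p\,d\mu$, and the first term becomes $\int_\Omega d_X(z,Z_t)^p\,d\mu$ (because $Z_t$ is $\F_t$-measurable and $d_X(z,Z_t)^p$ is constant on each atom $a$, contributing $\mu(a)d_X(z,Z_t|_a)^p$). Thus for every $t$ we obtain the one-step estimate
\begin{equation}\label{eq:one step}
\int_\Omega d_X(z,Z_t)^p\,d\mu+\frac{1}{K^p}\int_\Omega d_X(Z_t,Z_{t+1})^p\,d\mu\le \int_\Omega d_X(z,Z_{t+1})^p\,d\mu.
\end{equation}
Multiplying \eqref{eq:one step} by $K^p$ and summing over $t=0,\ldots,m-1$ telescopes the quantities $K^p\int_\Omega d_X(z,Z_t)^p\,d\mu$: all intermediate terms cancel, leaving $K^p\int_\Omega d_X(z,Z_0)^p\,d\mu+\sum_{t=0}^{m-1}\int_\Omega d_X(Z_t,Z_{t+1})^p\,d\mu\le K^p\int_\Omega d_X(z,Z_m)^p\,d\mu$. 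Finally $Z_0$ is $\F_0$-measurable, hence constant on $\Omega=\F_0(\omega)$, so $\int_\Omega d_X(z,Z_0)^p\,d\mu=d_X(Z_0,z)^p$, which gives exactly \eqref{eq:pisier bary}.

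The only genuinely delicate point — and the step I would write out most carefully — is the passage in \eqref{eq:atomic step}: one must check that the conditional barycenter $\B(Z_{t+1}\mid\F_t)$ restricted to the atom $a$ really is the point $\B(\mu_a)$ with $\mu_a$ the $\mu$-weighted empirical law of $Z_{t+1}$ on $a$, so that the martingale identity $\B(Z_{t+1}\mid\F_t)=Z_t$ legitimately supplies the hypothesis ``$x=\B(\mu)$'' needed to invoke Definition~\ref{def:p}. This is just careful bookkeeping with \eqref{eq:def conditional}, but it is where the nonstandard (non-tower) nature of metric martingales could trip one up; note, however, that the argument above only ever uses the one-step identity $\B(Z_{t+1}\mid\F_t)=Z_t$ and never the full tower property, so the warning in the text about $\B(Z_i\mid\F_j)\neq Z_j$ is irrelevant here. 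Everything else is a routine telescoping sum.
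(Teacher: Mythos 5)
Your proposal is correct and follows essentially the same route as the paper's proof: apply the $p$-barycentric inequality \eqref{eq:def p bary} with $x=z$ and $\mu$ the conditional law of $Z_{t+1}$ on each atom of $\F_t$ (where the martingale identity supplies $\B(\mu)=Z_t$), sum over atoms, and telescope in $t$. The only difference is notational — you index by atoms while the paper indexes by a representative point $\omega$ of each atom — and your closing observation that only the one-step identity is needed is exactly why the paper's caveat about the tower property is harmless.
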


\begin{proof}
Fix $t\in \{0,\ldots,m-1\}$ and $\omega\in \Omega$.
Recalling~\eqref{eq:def conditional}, an application
of~\eqref{eq:def p bary} to the probability measure
$\frac{1}{\mu(\F_t(\omega))}\sum_{a\in \F_t(\omega)}
\mu(a)\d_{Z_{t+1}(a)}$ yields the estimate
\begin{multline}\label{eq:use p bary conditioned}
d_X(Z_{t}(\omega),z)^p+\frac{1}{K^p} \sum_{a\in \F_t(\omega)} \frac{\mu(a)}{\mu(\F_t(\omega))}d_X(Z_t(\omega),Z_{t+1}(a))^p\\
\le \sum_{a\in \F_t(\omega)}  \frac{\mu(a)}{\mu(\F_t(\omega))}d_X(z,Z_{t+1}(a))^p,
\end{multline}
where we used the martingale assumption $Z_t=\B(Z_{t+1}|\F_t)$. Let $A_1,\ldots,A_k\in \F_t$ be the distinct atoms of $\F_t$ and fix $\omega_i\in A_i$ for every $i\in \{1,\ldots,k\}$ (thus $A_i=\F_t(\omega_i)$). It follows from~\eqref{eq:use p bary conditioned} that for every $i\in \{1,\ldots,k\}$,
\begin{multline}\label{eq:on each i}
\sum_{a\in A_i} \mu(a)d_X(Z_t(\omega_i),Z_{t+1}(a))^p\\\le K^p\left(\sum_{a\in A_i}  \mu(a)d_X(z,Z_{t+1}(a))^p-\mu(A_i)d_X(Z_{t}(\omega_i),z)^p\right).
\end{multline}
Since $Z_t$ is constant on each of the sets $A_1,\ldots,A_k$, by summing~\eqref{eq:on each i} over $i\in \{1,\ldots,k\}$ we obtain the estimate
\begin{equation}\label{eq:to telescope}
\frac{1}{K^p}\int_\Omega d_X(Z_{t+1},Z_t)^pd\mu\le \int_\Omega d_X(Z_{t+1},z)^pd\mu-\int_\Omega d_X(Z_t,z)^pd\mu.
\end{equation}
The desired inequality~\eqref{eq:pisier bary} now follows since the right hand side of~\eqref{eq:to telescope}  telescopes upon summation  over $t\in \{0,\ldots,m-1\}$.
\end{proof}

\section{Proof of Theorem~\ref{thm:bary-cotype}}\label{sec:cotype proof}

Throughout the remainder of this paper it will be convenient to use
the following notation for Ces\'aro averages.
\begin{equation}\label{eq:cesaro notation}
\A_t(A)\eqdef \frac{1}{t} \sum_{s=1}^{t}A^s.
\end{equation}
The following simple lemma will be used in the proof of
Theorem~\ref{thm:bary-cotype}, as well as in Section~\ref{sec:ball's
cotype}.

\begin{lemma}\label{lem:control by cesaro}
Fix $p\in [1,\infty)$ and $n,t\in \N$. Suppose that  $A\in M_n(\R)$
is a stochastic matrix that is reversible relative to $\pi\in
\Delta^{n-1}$. Then for every metric space $(X,d_X)$ and every
$x_1,\ldots,x_n\in X$,
$$
\sum_{i=1}^n\sum_{j=1}^n \pi_i(A^t)_{i j}d_X(x_i,x_j)^p\le 2^p \sum_{i=1}^n\sum_{j=1}^n \pi_i\A_t(A)_{ij}d_X(x_i,x_j)^p.
$$
\end{lemma}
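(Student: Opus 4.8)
The plan is to compare the single power $A^t$ with the Ces\`aro average $\A_t(A)=\frac1t\sum_{s=1}^t A^s$ by expressing $d_X(x_i,x_j)^p$ through a path that goes from $x_i$ to $x_j$ via some intermediate index, and then averaging over the Ces\`aro sum. Concretely, for any two indices $i,j$ and any $s\in\{1,\dots,t\}$, the triangle inequality together with the convexity inequality $(a+b)^p\le 2^{p-1}(a^p+b^p)$ gives
\begin{equation*}
d_X(x_i,x_j)^p\le 2^{p-1}\big(d_X(x_i,x_k)^p+d_X(x_k,x_j)^p\big)
\end{equation*}
for every intermediate index $k$. The idea is to pick $k$ distributed according to the $s$-step and $(t-s)$-step transition probabilities out of $i$ and into $j$ respectively, and then to average over $s$.

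First I would write the left-hand side as a sum over the joint distribution of a stationary reversible walk: $\sum_{i,j}\pi_i(A^t)_{ij}d_X(x_i,x_j)^p$ is the expectation of $d_X(x_{W_0},x_{W_t})^p$ where $(W_0,\dots,W_t)$ is the reversible Markov chain started from $\pi$. For each $s\in\{1,\dots,t\}$, inserting the intermediate state $W_s$ and applying the convexity estimate above yields
\begin{equation*}
\E\, d_X(x_{W_0},x_{W_t})^p\le 2^{p-1}\Big(\E\, d_X(x_{W_0},x_{W_s})^p+\E\, d_X(x_{W_s},x_{W_t})^p\Big).
\end{equation*}
Now $\E\, d_X(x_{W_0},x_{W_s})^p=\sum_{i,k}\pi_i(A^s)_{ik}d_X(x_i,x_k)^p$, and by reversibility and stationarity $\E\, d_X(x_{W_s},x_{W_t})^p=\E\, d_X(x_{W_0},x_{W_{t-s}})^p=\sum_{i,k}\pi_i(A^{t-s})_{ik}d_X(x_i,x_k)^p$. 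Averaging this bound over $s=1,\dots,t$, the first term contributes $\frac1t\sum_{s=1}^t \sum_{i,k}\pi_i(A^s)_{ik}d_X(x_i,x_k)^p=\sum_{i,k}\pi_i\A_t(A)_{ik}d_X(x_i,x_k)^p$, and the second term contributes $\frac1t\sum_{s=1}^t\sum_{i,k}\pi_i(A^{t-s})_{ik}d_X(x_i,x_k)^p$, which ranges over the exponents $0,1,\dots,t-1$; bounding the $s=t$ term (exponent $0$, which is $0$ since $d_X(x_i,x_i)=0$) away and comparing with $\A_t(A)$ shows this is also at most $\sum_{i,k}\pi_i\A_t(A)_{ik}d_X(x_i,x_k)^p$. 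Adding the two pieces gives the factor $2^{p-1}\cdot 2=2^p$, which is exactly the claimed bound. (Alternatively, one avoids the second-term bookkeeping by simply noting $\sum_{s=1}^t A^{t-s}=\sum_{s=0}^{t-1}A^s\preceq A^0+t\,\A_t(A)$ as a sum of nonnegative matrices, so $\frac1t\sum_{s=1}^tA^{t-s}$ tested against $d_X^p$, which annihilates the diagonal $A^0$ term, is at most $\sum_{i,k}\pi_i\A_t(A)_{ik}d_X(x_i,x_k)^p$.)

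The probabilistic language is just a convenience and can be replaced by the bare identity $(A^t)_{ij}=\sum_k (A^s)_{ik}(A^{t-s})_{kj}$ together with the reversibility relation $\pi_i(A^s)_{ik}=\pi_k(A^s)_{ki}$; I would likely present it this way to keep the lemma self-contained. I do not anticipate a genuine obstacle here — the only mild care needed is the manipulation showing $\frac1t\sum_{s=1}^t(A^{t-s})$ is dominated, when paired with $d_X^p$, by $\A_t(A)$, and the harmless treatment of the diagonal terms (exponent $0$) that drop out because $d_X(x_i,x_i)=0$. The constant $2^p$ comes out cleanly and matches the statement.
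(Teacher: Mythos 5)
Your proposal is correct and is essentially the paper's own argument: both split $(A^t)_{ij}=\sum_k(A^s)_{ik}(A^{t-s})_{kj}$, apply $d_X(x_i,x_j)^p\le 2^{p-1}(d_X(x_i,x_k)^p+d_X(x_k,x_j)^p)$, average over $s\in\{1,\dots,t\}$, and use reversibility (stationarity) to reduce the second term to exponents $0,\dots,t-1$, whose identity part vanishes against $d_X^p$ and whose remainder is dominated entrywise by $\A_t(A)$. The probabilistic phrasing is only a cosmetic difference from the paper's matrix computation.
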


\begin{proof}
By the triangle inequality, for every $i,j,k\in \{1,\ldots,n\}$ we
have \begin{equation}\label{eq:p trianfle ijk}
d_X(x_i,x_j)^p\le
2^{p-1}\big(d_X(x_i,x_k)^p+d_X(x_k,x_j)^p\big).
\end{equation}
Consequently,
\begin{align*}
\nonumber&\sum_{i=1}^n\sum_{j=1}^n \pi_i(A^t)_{i j}d_X(x_i,x_j)^p\\\nonumber&=\frac{1}{t}\sum_{s=1}^t\sum_{i=1}^n\sum_{j=1}^n \pi_i\left(\sum_{k=1}^n (A^s)_{i k}(A^{t-s})_{kj}\right) d_X(x_i,x_j)^p\\\nonumber
&\stackrel{\eqref{eq:p trianfle ijk}}{\le} \frac{2^{p-1}}{t}\sum_{s=1}^t\sum_{i=1}^n\sum_{j=1}^n\sum_{k=1}^n \pi_i(A^s)_{i k}(A^{t-s})_{kj}
\big(d_X(x_i,x_k)^p+d_X(x_k,x_j)^p\big)\\
\nonumber&\stackrel{\eqref{eq:cesaro notation}}{=} 2^{p-1}\sum_{i=1}^n\sum_{j=1}^n \pi_i
\left(\A_t(A)_{ij}+\frac{t-1}{t}\A_{t-1}(A)_{ij}\right)d_X(x_i,x_j)^p\\
&\le 2^p \sum_{i=1}^n\sum_{j=1}^n \pi_i\A_t(A)_{ij}d_X(x_i,x_j)^p.\qedhere
\end{align*}
\end{proof}

\begin{proof}[Proof of Theorem~\ref{thm:bary-cotype}]
Fix $p,K,\Gamma\in [1,\infty)$, a metric space $(X,d_X)$ and a
barycenter map $\B:\P_X\to X$ with respect to which $(X,d_X)$ is
both $W_p$ barycentric with constant $\Gamma$ and $p$-barycentric
with constant $K$. We also fix $n,t\in \N$, a probability vector
$\pi\in \Delta^{n-1}$ and a stochastic matrix $A=(a_{ij})$ that is
reversible relative to $\pi$. Given $x_1,\ldots,x_n\in X$ our goal
is to prove that there exist $y_1,\ldots,y_n\in X$ such
that~\eqref{def:markov cotype} is satisfied with $N\lesssim \Gamma
K$.

In the proof of Theorem~\ref{thm:bary-cotype} we may assume that the right hand side of~\eqref{def:markov cotype} is nonzero. By restricting to the support of $\pi$ we may also assume that $\pi\in (0,1)^n$. Letting $\Pi\in M_n(\R)$ be given by $\Pi_{ij}=\pi_j$, choose $\e\in (0,1/2)$ small enough so that for $B=(1-\e)A+\e \Pi$ we have
$$
\sum_{i=1}^n\sum_{j=1}^n \pi_i \A_t(B)_{ij} d_X(x_i,x_j)^p\lesssim \sum_{i=1}^n\sum_{j=1}^n \pi_i \A_t(A)_{ij} d_X(x_i,x_j)^p.
$$
Since $B_{ij}\ge (1-\e)a_{ij}\ge \frac12 a_{ij}$, for every $y_1,\ldots,y_n\in X$ we have
\begin{multline*}
\sum_{i=1}^n \pi_id_X(x_i,y_i)^p+t\sum_{i=1}^n\sum_{j=1}^n \pi_iB_{ij}
d_X(y_i,y_j)^p\\\gtrsim \sum_{i=1}^n \pi_id_X(x_i,y_i)^p+t\sum_{i=1}^n\sum_{j=1}^n \pi_ia_{ij}
d_X(y_i,y_j)^p.
\end{multline*}
Since the matrix $B$ is stochastic and reversible relative to $\pi$ and none of its entries vanish, this shows that it suffices to prove Theorem~\ref{thm:bary-cotype} under the assumption that $\pi_i,a_{ij}>0$ for every $i,j\in \{1,\ldots,n\}$.

Denote $\Omega=\{1,\ldots,n\}^t$. Set $\F_0=\{\emptyset,\Omega\}$ and for every $s\in \{1,\ldots,t\}$ let $\F_s\subseteq 2^\Omega$ be the $\sigma$-algebra generated by the first $s$ coordinates, i.e., the atoms of $\F_s$ are $\{E_{\tau}\}_{\tau\in \{1,\ldots,n\}^s}$, where we denote
for every $(i_1,\ldots,i_s)\in \{1,\ldots,n\}^s$,
$$
E_{(i_1,\ldots,i_s)}\eqdef \big\{(j_1,\ldots,j_t)\in \Omega:\ (j_1,\ldots,j_s)=(i_1,\ldots,i_s)\big\}.
$$

Fix $\ell\in \{1,\ldots,n\}$ and define  $\mu_\ell\in \P_\Omega$ on $\Omega$ by
$$\forall\, (i_1,\ldots,i_t)\in \Omega,\qquad \mu_\ell(i_1,\ldots,i_t)\eqdef a_{\ell,i_1}\prod_{s=1}^{t-1}a_{i_s,i_{s+1}}.$$
Thus $(\Omega,\mu_\ell)$ is the probability space of trajectories of length $t$ of the Markov chain on $\{1,\ldots,n\}$ that starts at $\ell$ and whose transition matrix is $A$. By definition, $\mu_\ell(\omega)>0$ for every $\omega\in \Omega$.

We next define inductively mappings $M_0^{(\ell,t)},\ldots,M_t^{(\ell,t)}:\Omega\to X$ as follows. For every $(i_1,\ldots,i_t)\in \Omega$,
$$
M_t^{(\ell,t)}(i_1,\ldots,i_t)\eqdef x_{i_t},
$$
and for every $s\in \{0,\ldots,t-1\}$,
\begin{equation}\label{eq:def ell martingale}
M_s^{(\ell,t)}(i_1,\ldots,i_t)\eqdef \B\left(\sum_{j=1}^n
a_{i_s,j}\cdot \d_{M_{s+1}^{(\ell,t)}(i_1,\ldots,i_s,j,1\ldots,1)}\right),
\end{equation}
where $i_0=\ell$. Thus $M_s^{(\ell,t)}(i_1,\ldots,i_t)$ depends only
on $(i_1,\ldots,i_s)$. We may therefore think of $M_s^{(\ell,t)}$ as
an $X$-valued function defined on $\{1,\ldots,n\}^s$, and
$M_0^{(\ell,t)}$ as a point in $X$.

Definition~\eqref{eq:def ell martingale} implies that for $s\in \{0,\ldots,t\}$ and $(i_1,\ldots,i_t)\in \Omega$,
\begin{equation}\label{eq:last coordinate dependence}
M_s^{(\ell,t)}(i_1,\ldots,i_t)=M_0^{(i_s,t-s)}.
\end{equation}
Moreover, recalling~\eqref{eq:def conditional} it follows from~\eqref{eq:def ell martingale} that for $s\in \{0,\ldots,t-1\}$,
$$
M_s^{(\ell,t)}=\B\left(\left. M_{s+1}^{(\ell,t)}\right|\F_s\right).
$$
 Thus $M_0^{(\ell,t)},\ldots,M_t^{(\ell,t)}$ is an $X$-valued martingale with respect to the measure $\mu_\ell$ and the filtration $\F_0\subseteq \ldots,\subseteq \F_t=2^\Omega$. An application of Lemma~\ref{lem:pisier} (with $z=x_\ell$) therefore yields the following estimate.
\begin{multline}\label{eq:use pisier}
\sum_{s=1}^{t} \sum_{i=1}^n (A^{s-1})_{\ell,i}\sum_{j=1}^n a_{ij} d_X\left(M_0^{(i,t-s+1)},M_0^{(j,t-s)}\right)^p\\\le K^p\left(\sum_{j=1}^n(A^t)_{\ell j}d_X\left(x_j,x_\ell\right)^p-d_X\left(x_\ell,M_0^{(\ell,t)}\right)^p\right).
\end{multline}

Multiplying~\eqref{eq:use pisier} by $\pi_\ell$ and summing over
$\ell\in \{1,\ldots, n\}$ while using the fact that $A^{s-1}$ is
stochastic and reversible relative to $\pi$ shows that
\begin{multline}\label{eq:summed over ell}
\sum_{s=1}^t\sum_{i=1}^n\sum_{j=1}^n \pi_i a_{ij}  d_X\left(M_0^{(i,s)},M_0^{(j,s-1)}\right)^p\\\le K^p\left(\sum_{\ell=1}^n\sum_{j=1}^n \pi_\ell (A^t)_{\ell j}d_X(x_\ell,x_j)^p-\sum_{\ell=1}^n \pi_\ell d_X\left(x_\ell,M_0^{(\ell,t)}\right)^p\ \right).
\end{multline}

In order to bound the left hand side of~\eqref{eq:summed over ell} from below, observe that for every $i,j\in \{1,\ldots,n\}$ condition~\eqref{eq:Gamma Wp Lip} of our assumption that $(X,d_X)$ is $W_p$ barycentric with constant $\Gamma$ implies that
\begin{multline}\label{eq:just the s sum}
\sum_{s=1}^t d_X\left(M_0^{(i,s)},M_0^{(j,s-1)}\right)^p\\\ge \frac{t}{\Gamma^p}d_X\left(\B\left(\frac{1}{t}\sum_{s=1}^t \d_{M_0^{(i,s)}}\right),\B\left(\frac{1}{t}\sum_{s=1}^t \d_{M_0^{(j,s-1)}}\right)\right)^{p}.
\end{multline}
Moreover, by the triangle inequality and convexity of $u\mapsto u^p$ on $[0,\infty)$,
\begin{align}\label{eq:triangle barys}
\nonumber&d_X\left(\B\left(\frac{1}{t}\sum_{s=1}^t \d_{M_0^{(i,s)}}\right),\B\left(\frac{1}{t}\sum_{s=1}^t \d_{M_0^{(j,s)}}\right)\right)^{p}\\
\nonumber&\le 2^{p-1} d_X\left(\B\left(\frac{1}{t}\sum_{s=1}^t \d_{M_0^{(i,s)}}\right),\B\left(\frac{1}{t}\sum_{s=1}^t \d_{M_0^{(j,s-1)}}\right)\right)^{p}\\&\quad+2^{p-1}d_X\left(\B\left(\frac{1}{t}\sum_{s=1}^t \d_{M_0^{(j,s)}}\right),\B\left(\frac{1}{t}\sum_{s=1}^t \d_{M_0^{(j,s-1)}}\right)\right)^{p}.
\end{align}
Another application of~\eqref{eq:Gamma Wp Lip} shows that
\begin{multline}\label{eq:second use Gamma}
d_X\left(\B\left(\frac{1}{t}\sum_{s=1}^t \d_{M_0^{(j,s)}}\right),\B\left(\frac{1}{t}\sum_{s=1}^t \d_{M_0^{(j,s-1)}}\right)\right)\\\le \frac{\Gamma}{t^{1/p}} d_X\left(M_0^{(j,t)},M_0^{(j,0)}\right)=\frac{\Gamma}{t^{1/p}} d_X\left(M_0^{(j,t)},x_j\right).
\end{multline}
Consequently, if we define
\begin{equation}\label{eq:def yi cotype}
y_i\eqdef \B\left(\frac{1}{t}\sum_{s=1}^t \d_{M_0^{(i,s)}}\right),
\end{equation}
then it follows from~\eqref{eq:just the s sum}, \eqref{eq:triangle barys} and~\eqref{eq:second use Gamma} that
\begin{multline}\label{eq:y_i appear}
\sum_{s=1}^t\sum_{i=1}^n\sum_{j=1}^n \pi_i a_{ij}  d_X\left(M_0^{(i,s)},M_0^{(j,s-1)}\right)^p\\\ge \frac{2t}{(2\Gamma)^p}\sum_{i=1}^n\sum_{j=1}^n \pi_i a_{ij} d_X(y_i,y_j)^p-\sum_{j=1}^n \pi_j d_X\left(M_0^{(j,t)},x_j\right)^p.
\end{multline}
A substitution of~\eqref{eq:y_i appear} into~\eqref{eq:summed over ell} now yields the following estimate.
\begin{align}\label{eq:t term proved}
\nonumber&t\sum_{i=1}^n\sum_{j=1}^n \pi_i a_{ij} d_X(y_i,y_j)^p\\& \nonumber\le
\frac{(2\Gamma K)^p}{2}\sum_{i=1}^n\sum_{j=1}^n \pi_i(A^t)_{i j}d_X(x_i,x_j)^p\\&\quad-
\nonumber2^{p-1}\Gamma^p\left(K^p-1\right)\sum_{j=1}^n \pi_j d_X\left(M_0^{(j,t)},x_j\right)^p\\
&\le \frac{(2\Gamma K)^p}{2}\sum_{i=1}^n\sum_{j=1}^n \pi_i(A^t)_{i j}d_X(x_i,x_j)^p.
\end{align}
By combining~\eqref{eq:t term proved} and Lemma~\ref{lem:control by
cesaro} we deduce that
\begin{equation}\label{eq;bring in cesaro}
t\sum_{i=1}^n\sum_{j=1}^n \pi_i a_{ij} d_X(y_i,y_j)^p\le (4\Gamma K)^p \sum_{i=1}^n\sum_{j=1}^n \pi_i\A_t(A)_{ij}d_X(x_i,x_j)^p.
\end{equation}

Next, we need to bound the quantity $\sum_{i=1}^n \pi_i
d_X(x_i,y_i)^p$. We first claim that for every $i\in \{1,\ldots,
n\}$, every $s\in \{0,\ldots,t\}$ and every $z\in X$ we have
\begin{equation}\label{eq:for iteration gamma=1}
d_X\left(z,M_0^{(i,t)}\right)^p\le \sum_{j=1}^n (A^s)_{ij} d_X\left(z,M_0^{(j,t-s)}\right)^p.
\end{equation}
The proof of~\eqref{eq:for iteration gamma=1} is by induction on $s$. For $s=0$ the desired  inequality~\eqref{eq:for iteration gamma=1} holds as equality. Assuming the validity of~\eqref{eq:for iteration gamma=1} for some $s\in \{0,\ldots,t-1\}$, and recalling~\eqref{eq:def ell martingale} and~\eqref{eq:last coordinate dependence}, observe that for every $j\in \{1,\ldots,n\}$ we have
$$
M_0^{(j,t-s)}=\B\left(\sum_{k=1}^n a_{jk} \d_{M_0^{(k,t-s+1)}}\right).
$$
Consequently, it follows from~\eqref{eq:def p bary} that
\begin{equation}\label{eq:use p bary without second term}
d_X\left(z,M_0^{(j,t-s)}\right)^p\le \sum_{k=1}^n a_{jk} d_X\left(z,M_0^{(k,t-s+1)}\right)^p.
\end{equation}
So,
\begin{multline*}
d_X\left(z,M_0^{(i,t)}\right)^p
\stackrel{\eqref{eq:for iteration gamma=1}\wedge\eqref{eq:use p bary without second term}}{\le} \sum_{j=1}^n (A^s)_{ij}\sum_{k=1}^n a_{jk} d_X\left(z,M_0^{(k,t-s+1)}\right)^p\\=\sum_{k=1}^n (A^{s+1})_{ik} d_X\left(z,M_0^{(k,t-s+1)}\right)^p,
\end{multline*}
thus completing the inductive verification of~\eqref{eq:for iteration gamma=1}.

When $s=t$ inequality~\eqref{eq:for iteration gamma=1} becomes
\begin{equation}\label{eq:post iteration s=t}
d_X\left(z,M_0^{(i,t)}\right)^p\le \sum_{j=1}^n (A^t)_{ij} d_X\left(z,x_j\right)^p.
\end{equation}
Hence, for every $i\in \{1,\ldots,n\}$,
\begin{multline*}
d_X(x_i,y_i)^p\stackrel{\eqref{eq:def yi cotype}}{=} d_X\left(x_i,\B\left(\frac{1}{t}\sum_{s=1}^t \d_{M_0^{(i,s)}}\right)\right)^p\stackrel{\eqref{eq:def p bary}}{\le} \frac{1}{t} \sum_{s=1}^t d_X\left(x_i, M_0^{(i,s)}\right)^p\\
\stackrel{\eqref{eq:post iteration s=t}}{\le} \frac{1}{t}\sum_{s=1}^t \sum_{j=1}^n (A^s)_{ij} d_X\left(x_i,x_j\right)^p\stackrel{\eqref{eq:cesaro notation}}{=}\sum_{j=1}^n \A_t(A)_{ij} d_X\left(x_i,x_j\right)^p.
\end{multline*}
Consequently,
\begin{equation}\label{eq:diagonal sum}
\sum_{i=1}^n\pi_i d_X(x_i,y_i)^p\le \sum_{i=1}^n\sum_{j=1}^n \pi_i\A_t(A)_{ij} d_X\left(x_i,x_j\right)^p.
\end{equation}
By summing~\eqref{eq;bring in cesaro} and~\eqref{eq:diagonal sum} we
conclude that the desired inequality~\eqref{def:markov cotype} holds
true with $y_1,\ldots,y_n$ chosen as in~\eqref{eq:def yi cotype} and
$N^p=(4\Gamma K)^p+1$. This completes the proof of
Theorem~\ref{thm:bary-cotype}.\end{proof}

\section{Proof of Theorem~\ref{thm:calculus gap}}\label{sec: norm}

Theorem~\ref{thm:calculus gap} is a consequence of Lemma~\ref{lem:even t iterated version} and Lemma~\ref{lem:bound lambda by gamma} below. These lemmas are meaningful without the restriction $K=1$ of Theorem~\ref{thm:calculus gap}: this more stringent assumption will only be used later, when we combine Lemma~\ref{lem:even t iterated version} and Lemma~\ref{lem:bound lambda by gamma} to deduce Theorem~\ref{thm:calculus gap}.

In order to state our results we need to first introduce a small amount of notation. Given a metric space $(X,d_X)$ and $p\in [1,\infty)$, for every $n\in \N$  we denote by $L_p^n(X)$ the space of all function $f:\{1,\ldots, n\}\to X$, equipped with the metric
\begin{equation*}\label{eq:L_p metric}
\forall\, f,g\in L_p^n(X),\qquad d_{L_p^n(X)}(f,g)\eqdef \left(\frac{1}{n}\sum_{i=1}^n d_X(f(i),g(i))^p\right)^{1/p}.
\end{equation*}

Suppose that $\B:\P_X\to X$ is a barycenter map. In what follows it will be convenient to use the following slight abuse of notation: for every $f:\{1,\ldots,n\}\to X$ write
$$
\B(f)\eqdef \B\left(\frac{1}{n}\sum_{i=1}^n \d_{f(i)}\right).
$$

For a symmetric stochastic matrix $A=(a_{ij})\in M_n(\R)$ define a
mapping $A\otimes I_X^n: L_p^n(X)\to L_p^n(X)$ by setting for every
$i\in \{1,\ldots n\}$ and $f:\{1,\ldots,n\}\to X$,
$$
(A\otimes I_X^n)(f)(i)\eqdef \B\left(\sum_{j=1}^n a_{ij} \d_{f(j)}\right).
$$
We warn that, unlike in the setting of Banach space valued mappings, given two symmetric stochastic matrices $A,B\in M_n(\R)$ the composition $(A\otimes I_X^n)\circ (B\otimes I_X^n)$ need not be of the form $C\otimes I_X^n$ for some symmetric stochastic matrix $C\in M_n(\R)$, and in particular the identity $(A\otimes I_X^n)\circ (B\otimes I_X^n)=(AB)\otimes I_X$ need not hold true.

\begin{definition}\label{def:lambda}
Fix $p\in [1,\infty)$ and a metric space $(X,d_X)$ equipped with a barycenter map $\B:\P_X\to X$. Given $T:L_p^n(X)\to L_p^n(X)$, define $\lambda_p(T)\in [0,\infty]$ to be the infimum over those $\lambda\in (0,\infty]$ for which every $f\in L_p^n(X)$ satisfies
$$
d_{L_p^n(X)}\left(T(f),\B\left(T(f)\right)\right)\le \lambda \cdot d_{L_p^n(X)}\left(f,\B(f)\right).
$$
\end{definition}

Lemma~\ref{lem:even t iterated version} below relates the quantities $\gamma_+(A^t,d_X^p)$ and $\lambda_p\left(A\otimes I_X^n\right)$. Note that it assumes that $(X,d_X)$ is $p$-barycentric with constant $K$, but $K$ does not appear in the conclusion~\eqref{eq:Gamma but no K}.  The reason for this is that the proof of Lemma~\ref{lem:even t iterated version} uses a weaker version of~\eqref{eq:def p bary} in which the rightmost term on the left hand side of~\eqref{eq:def p bary} is dropped, i.e., the assumption that $(X,d_X)$ is $p$-barycentric with constant $K$ is used in Lemma~\ref{lem:even t iterated version} only through the requirement that every $\mu\in \P_X$ satisfies
$$
\forall\, x\in X,\qquad d_X(x,\B(\mu))^p\le \int_{X}d_X(x,y)^pd\mu(y).
$$
\begin{lemma}\label{lem:even t iterated version}
Fix $p,K,\Gamma\in [1,\infty)$ and $n,t\in \N$. Suppose that
$(X,d_X)$ is a metric space that is both $W_p$ barycentric with
constant $\Gamma$ and $p$-barycentric with constant $K$. Let
$A=(a_{ij})\in M_n(\R)$ be  a symmetric stochastic matrix such that
$\lambda_p\left(A\otimes I_X^n\right)<1$. Then
\begin{equation}\label{eq:Gamma but no K}
\gamma_+(A^t,d_X^p)\le \left(\Gamma+ \frac{4(\Gamma+1)}{1-\lambda_p\left(A\otimes I_X^n\right)^{2t}} \right)^p.
\end{equation}
\end{lemma}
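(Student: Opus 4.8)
The plan is to unwind the definition of $\gamma_+(A^t,d_X^p)$ directly. Fix $x_1,\dots,x_n,y_1,\dots,y_n\in X$, view $f,g\in L_p^n(X)$ given by $f(i)=x_i$, $g(j)=y_j$, put $T\eqdef A\otimes I_X^n$ and $\lambda\eqdef\lambda_p(T)\in[0,1)$, and set $R\eqdef\frac1n\sum_{i,j}(A^t)_{ij}d_X(x_i,y_j)^p$; the goal is the pointwise inequality $\frac1{n^2}\sum_{i,j}d_X(x_i,y_j)^p\le\bigl(\Gamma+4(\Gamma+1)/(1-\lambda^{2t})\bigr)^pR$. Write $\bar f\eqdef\B(f)\in X$, $\bar g\eqdef\B(g)\in X$, and let $\varphi\eqdef d_{L_p^n(X)}(f,\bar f)$ and $\psi\eqdef d_{L_p^n(X)}(g,\bar g)$ be the ``spreads'' of $f$ and $g$ about their barycenters. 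The triangle inequality $d_X(x_i,y_j)\le d_X(x_i,\bar f)+d_X(\bar f,\bar g)+d_X(\bar g,y_j)$ together with Minkowski's inequality (applied to the $\ell_p$ norm of the $n^2$ summands) yields the reduction $(\frac1{n^2}\sum_{i,j}d_X(x_i,y_j)^p)^{1/p}\le\varphi+d_X(\bar f,\bar g)+\psi$, so it suffices to bound each of the three terms by a constant multiple of $R^{1/p}$.

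The middle term uses only the $W_p$ barycentric hypothesis: since $A$ is symmetric and stochastic, $A^t$ is doubly stochastic, so $\frac1n\sum_{i,j}(A^t)_{ij}\d_{(x_i,y_j)}\in\Pi(\frac1n\sum_i\d_{x_i},\frac1n\sum_j\d_{y_j})$ and hence $W_p(\frac1n\sum_i\d_{x_i},\frac1n\sum_j\d_{y_j})^p\le R$, so that $d_X(\bar f,\bar g)\le\Gamma R^{1/p}$ by~\eqref{eq:Gamma Wp Lip}.

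For the spreads, the key point is a ``discretised Jensen'' estimate comparing $f$ with the $t$-fold \emph{iterate} $T^tg$ (rather than with $A^t\otimes I_X^n$, which in the nonlinear setting need not coincide with $T^t$): namely $d_{L_p^n(X)}(f,T^tg)\le R^{1/p}$. To prove this, set $E_s\eqdef\frac1n\sum_{i,k}(A^s)_{ik}d_X(x_i,(T^{t-s}g)(k))^p$ for $0\le s\le t$; then $E_0=d_{L_p^n(X)}(f,T^tg)^p$ and $E_t=R$, and $(E_s)_s$ is non-decreasing, since $(T^{t-s}g)(k)=\B\bigl(\sum_l a_{kl}\d_{(T^{t-s-1}g)(l)}\bigr)$ and the weak form $d_X(x,\B(\mu))^p\le\int_Xd_X(x,y)^p\,d\mu(y)$ of the $p$-barycentric inequality gives $d_X(x_i,(T^{t-s}g)(k))^p\le\sum_la_{kl}d_X(x_i,(T^{t-s-1}g)(l))^p$, after which $\sum_k(A^s)_{ik}a_{kl}=(A^{s+1})_{il}$. (Only the weak $p$-barycentric inequality is used here, which is why the constant $K$ does not enter, as noted in the remark preceding the lemma.) By symmetry of $A^t$ the same argument gives $d_{L_p^n(X)}(g,T^tf)\le R^{1/p}$. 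Next, iterating Definition~\ref{def:lambda} yields $d_{L_p^n(X)}(T^tg,\B(T^tg))\le\lambda^t\psi$ and $d_{L_p^n(X)}(T^tf,\B(T^tf))\le\lambda^t\varphi$, while~\eqref{eq:Gamma Wp Lip} applied with the diagonal coupling gives $d_X(\B(T^tg),\bar f)\le\Gamma\, d_{L_p^n(X)}(T^tg,f)\le\Gamma R^{1/p}$ and likewise $d_X(\B(T^tf),\bar g)\le\Gamma R^{1/p}$. Plugging these into $\varphi\le d_{L_p^n(X)}(f,T^tg)+d_{L_p^n(X)}(T^tg,\B(T^tg))+d_X(\B(T^tg),\bar f)$ gives $\varphi\le(\Gamma+1)R^{1/p}+\lambda^t\psi$, and symmetrically $\psi\le(\Gamma+1)R^{1/p}+\lambda^t\varphi$; solving this pair (legitimate since $\lambda<1$) gives $\varphi,\psi\le\frac{(\Gamma+1)(1+\lambda^t)}{1-\lambda^{2t}}R^{1/p}\le\frac{2(\Gamma+1)}{1-\lambda^{2t}}R^{1/p}$. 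Combining with the two reductions above gives $(\frac1{n^2}\sum_{i,j}d_X(x_i,y_j)^p)^{1/p}\le\bigl(\Gamma+\frac{4(\Gamma+1)}{1-\lambda^{2t}}\bigr)R^{1/p}$, which is~\eqref{eq:Gamma but no K}.

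The one substantive step is the monotonicity of $(E_s)_s$: it is precisely what lets the argument run through the honest iterate $T^t$ — for which the contraction $\lambda_p(T^t)\le\lambda_p(T)^t$ is immediate from Definition~\ref{def:lambda} — instead of through $A^t\otimes I_X^n$, whose spread one cannot control since $T^t\ne A^t\otimes I_X^n$ in general. Everything else is triangle and Minkowski inequalities and bookkeeping with the row- and column-stochasticity of $A$.
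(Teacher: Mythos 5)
Your argument is correct and yields exactly the constant $\Gamma+4(\Gamma+1)/(1-\lambda^{2t})$ of \eqref{eq:Gamma but no K}, but it reaches it by a somewhat different route from the paper's. The paper proves Lemma~\ref{lem:even t} first: it uses the same three-term decomposition and the same coupling bound for $d_X(\B(f),\B(g))$, but it controls each spread $d_{L_p^n(X)}(f,\B(f))$ \emph{separately} by comparing $f$ with its $2t$-fold iterate $(A\otimes I_X^n)^{[2t]}(f)$, invoking the $(\Gamma+1)$-Lipschitz property of $h\mapsto d_{L_p^n(X)}(h,\B(h))$ (Lemma~\ref{lem:Gamma+1}), the diagonal estimate $d_{L_p^n(X)}(f,T^{[2t]}f)^p\le\frac1n\sum_{i,j}(A^{2t})_{ij}d_X(f(i),f(j))^p$ of Lemma~\ref{lem:f close to A iterate}, and then the $2^p$ comparison \eqref{eq:2t=t+t} of the $A^{2t}$ diagonal sum with the $A^t$ cross sum; the exponent $2t$ then enters via \eqref{eq:lambda decay}. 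You instead iterate only $t$ times, cross over from $f$ to $T^{[t]}g$ via the monotone sequence $E_s$ (your $E_s$ computation is precisely the induction behind Lemma~\ref{lem:f close to A iterate}, in a mixed $f$--$g$ form that makes the $A^{2t}\to A^t$ step unnecessary), replace Lemma~\ref{lem:Gamma+1} by a direct triangle inequality in $L_p^n(X)$ together with the diagonal-coupling consequence of \eqref{eq:Gamma Wp Lip}, and close the argument by solving the symmetric system $\varphi\le(\Gamma+1)R^{1/p}+\lambda^t\psi$, $\psi\le(\Gamma+1)R^{1/p}+\lambda^t\varphi$ (legitimate since $\varphi,\psi<\infty$ and $\lambda<1$). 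This buys a conceptual explanation of where the exponent $2t$ comes from (one factor $\lambda^t$ per crossing), avoids two of the paper's auxiliary lemmas, and is self-contained; the paper's version buys modularity, since Lemma~\ref{lem:Gamma+1} and Lemma~\ref{lem:f close to A iterate} are stated in a reusable form. All the individual steps you use---the weak form of \eqref{eq:def p bary} (so that $K$ indeed does not appear), the symmetry of $A^t$ for the reversed estimate, and the bound $(1+\lambda^t)/(1-\lambda^{2t})=1/(1-\lambda^t)\le 2/(1-\lambda^{2t})$---check out.
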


In what follows, for $p,K\in [1,\infty)$ we denote by $\beta_p(K)$ the unique $\beta\in [1,\infty)$ satisfying
\begin{equation}\label{eq:def beta}
\beta^p+K^p(\beta-1)^p=K^p.
\end{equation}
Thus in particular,
$$
\beta_2(K)=\frac{2K^2}{K^2+1}.
$$
Observe that $\beta_p(K)\in [1,\min\{K,2\}]$ and $\beta_p(K)=1$ if and only if $K=1$. We also have
\begin{equation}\label{max formulation beta}
\beta_p(K)=\max_{\beta\in [1,2]} \min\left\{\beta, K\left(1-(\beta-1)^p\right)^{1/p}\right\}.
\end{equation}
To verify~\eqref{max formulation beta} note that the function $\beta\mapsto K\left(1-(\beta-1)^p\right)^{1/p}$ decreases from $K$ to $0$ on $[1,2]$, so the maximum that appears in~\eqref{max formulation beta} is attained when $\beta= K\left(1-(\beta-1)^p\right)^{1/p}$, or, due to~\eqref{eq:def beta}, when $\beta=\beta_p(K)$. An equivalent way to state~\eqref{max formulation beta} is that for every $a\in [0,\infty)$ we have
\begin{equation}\label{eq:ab}
0\le b\le a\implies \min\left\{a+b, K\left(a^p-b^p\right)^{1/p}\right\}\le \beta_p(K)a.
\end{equation}
To deduce~\eqref{eq:ab} from~\eqref{max formulation beta} simply write $b=(\beta-1)a$ for some $\beta\in [1,2]$.

\begin{lemma}\label{lem:bound lambda by gamma}Fix $p,K\in [1,\infty)$ and $n\in \N$.
Suppose that $(X,d_X)$ is a $p$-barycentric metric space with
constant $K$. Then every symmetric stochastic matrix $A=(a_{ij})\in
M_n(\R)$ satisfies
$$\lambda_p\left(A\otimes I_X^n\right)\le \beta_p(K)\left( \frac{K^{2p}\gamma_+(A,d_X^p)-1}{K^{2p}\gamma_+(A,d_X^p)+K^p}\right)^{1/p}.$$
\end{lemma}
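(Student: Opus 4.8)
The goal is to bound $\lambda_p(A\otimes I_X^n)$, i.e. to show that for every $f\in L_p^n(X)$,
\[
d_{L_p^n(X)}\bigl((A\otimes I_X^n)(f),\B((A\otimes I_X^n)(f))\bigr)\le \beta_p(K)\Bigl(\tfrac{K^{2p}\gamma_+(A,d_X^p)-1}{K^{2p}\gamma_+(A,d_X^p)+K^p}\Bigr)^{1/p} d_{L_p^n(X)}(f,\B(f)).
\]
Write $g = (A\otimes I_X^n)(f)$, so $g(i)=\B\bigl(\sum_j a_{ij}\d_{f(j)}\bigr)$. The two quantities to control are $\frac1n\sum_i d_X(g(i),\B(g))^p$ (the target) and $\frac1n\sum_i d_X(f(i),\B(f))^p$ (the source). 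The natural pivot point is $z:=\B(f)$, which is a single point of $X$. First I would apply the $p$-barycentric inequality \eqref{eq:def p bary} to each measure $\mu_i = \sum_j a_{ij}\d_{f(j)}$ with the base point $x=\B(g)$: this gives
\[
d_X(\B(g),g(i))^p + \tfrac1{K^p}\sum_j a_{ij} d_X(g(i),f(j))^p \le \sum_j a_{ij} d_X(\B(g),f(j))^p.
\]
Averaging over $i$ with weights $1/n$ and using symmetry/stochasticity of $A$, the right-hand side becomes $\frac1n\sum_j d_X(\B(g),f(j))^p$.

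**Key steps.** The heart of the argument is to exploit the nonlinear spectral gap. By definition of $\gamma_+(A,d_X^p)$ applied to the pair of $n$-tuples $(g(i))_i$ and $(f(j))_j$, we have
\[
\tfrac1{n^2}\sum_{i,j} d_X(g(i),f(j))^p \le \tfrac{\gamma_+(A,d_X^p)}{n}\sum_{i,j} a_{ij} d_X(g(i),f(j))^p,
\]
and similarly the left-hand average over $i,j$ of $d_X(g(i),f(j))^p$ can be compared with $d_X(\B(g),\B(f))^p$ plus the two variances, via a $p$-barycentric argument applied to $\B(g)$ and to $\B(f)=z$. The idea is to introduce three nonnegative quantities: $a = d_{L_p^n(X)}(f,\B(f))$ (source variance), the target variance $c = d_{L_p^n(X)}(g,\B(g))$, and the "coupling cost" $D^p = \frac1n\sum_i\sum_j a_{ij} d_X(g(i),f(j))^p$. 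From \eqref{eq:def p bary} one gets $c^p + \frac1{K^p}D^p \le (\text{something} \le)\ a^p + (\text{terms})$; from the $\gamma_+$ inequality $D^p$ is bounded below in terms of $\frac1{n^2}\sum_{i,j}d_X(g(i),f(j))^p$, which in turn (again by the variance-splitting identity/inequality through the barycenters $\B(g),\B(f)$) is at least roughly $c^p + a^p$ minus a cross term. Chaining these, and using that $\gamma_+\ge 1$ always, one derives an inequality of the shape $c^p(K^{2p}\gamma_+ + K^p) \le a^p(K^{2p}\gamma_+ - 1)\cdot(\text{adjusted by }\beta_p(K))$. The role of $\beta_p(K)$ — defined by $\beta^p + K^p(\beta-1)^p = K^p$, equivalently by the optimization \eqref{max formulation beta} and its consequence \eqref{eq:ab} — is precisely to absorb the cross term $d_X(\B(g),\B(f))$: one writes $d_X(\B(g),z)\le c + a$ or a weighted version, applies \eqref{eq:ab} with $a$ the source term and $b$ the cross-ratio term, and optimizes. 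This is the step where the somewhat mysterious constant $\beta_p(K)$ enters, and where I would be most careful.

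**Main obstacle.** The delicate point is the bookkeeping of the cross term $d_X(\B(g),\B(f))$ and arranging the inequalities so that the elementary lemma \eqref{eq:ab} applies cleanly, yielding exactly the constant $\beta_p(K)\bigl((K^{2p}\gamma_+-1)/(K^{2p}\gamma_+ + K^p)\bigr)^{1/p}$ rather than something lossier. Unlike the Banach-space case, one cannot simply center and use linearity of the barycenter; the triangle inequality together with the $p$-convexity inequality \eqref{eq:def p bary} (used both to bound $d_X(\B(g),f(j))^p$ from above by $\sum_k a_{jk}d_X(\B(g),f(k))^p$ — no, rather used to relate $d_X(\B(g),f(j))$ to the source data) must be combined just so. Concretely I expect to: (i) bound $\frac1n\sum_j d_X(\B(g),f(j))^p \le d_X(\B(g),\B(f))^p + a^p$ (this is an equality in Hilbert space and follows from \eqref{eq:def p bary} with the variance term dropped, or more precisely one uses \eqref{eq:def p bary} for $\B(f)$); (ii) plug into the averaged version of the first display to get $c^p + \frac1{K^p}D^p \le d_X(\B(g),\B(f))^p + a^p$; (iii) lower-bound $D^p$ using $\gamma_+$ and then lower-bound $\frac1{n^2}\sum_{i,j}d_X(g(i),f(j))^p$ by $d_X(\B(g),\B(f))^p + c^p + a^p$ minus nothing — using \eqref{eq:def p bary} twice (centering at $\B(g)$ then at $\B(f)$, dropping variance terms to get the $\ge$ direction is not automatic, so one must instead use the cruder triangle-based lower bound, or the parallelogram-type consequence of $p$-barycentricity); (iv) set $b = d_X(\B(g),\B(f))$, derive $c^p(1 + K^{p}\gamma_+^{-1}\cdot\ldots)\le$ and invoke \eqref{eq:ab} with the pair $(a, b/\text{(appropriate scaling)})$ — since $b\le a + c$ and in the regime of interest $c\le a$, so $b\le 2a$, matching the constraint $0\le b\le a$ after rescaling — to conclude. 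The final algebraic simplification to the stated closed form is routine once the structure is right, but getting the constants to land exactly requires the optimized choice hidden in $\beta_p(K)$; that calibration is the crux.
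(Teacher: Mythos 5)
You have assembled the right ingredients --- the inequality \eqref{eq:def p bary} applied to the measures $\nu_i=\sum_j a_{ij}\d_{f(j)}$ and to the empirical measure of $f$, the definition of $\gamma_+(A,d_X^p)$ applied to the pair $(g(i))_i$, $(f(j))_j$ with $g=(A\otimes I_X^n)(f)$, and the elementary inequality \eqref{eq:ab} as the source of $\beta_p(K)$ --- but the chain you describe does not close, and the obstruction is exactly at your step (i). By centering the first application of \eqref{eq:def p bary} at $x=\B(g)$ you are forced to bound $\frac1n\sum_j d_X(\B(g),f(j))^p$ \emph{from above} by $d_X(\B(g),\B(f))^p+a^p$. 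The axiom \eqref{eq:def p bary} (applied to $\mu=\frac1n\sum_j\d_{f(j)}$ with $x=\B(g)$) gives precisely the reverse inequality $d_X(\B(g),\B(f))^p+K^{-p}a^p\le\frac1n\sum_j d_X(\B(g),f(j))^p$; the upper bound is a Hilbert-space identity that fails in general $p$-barycentric spaces (already in $\R$ with $p=3$ and the mean as barycenter), and the triangle-inequality substitute costs a factor $2^{p-1}$ that destroys the exact constant. The same issue recurs in your step (iii), where the three-term lower bound $d_X(\B(g),\B(f))^p+c^p+a^p$ is stronger than anything the axioms provide.

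The fix is to reverse the centering, which is what the paper does by splitting the lemma into two pieces. First bound $d_{L_p^n(X)}(g,\B(f))$: apply \eqref{eq:def p bary} to each $\nu_i$ with $x=\B(f)$ and average, so the right-hand side is exactly $a^p=d_{L_p^n(X)}(f,\B(f))^p$ with no cross term; then lower-bound the coupling cost $\frac1n\sum_{i,j}a_{ij}d_X(f(j),g(i))^p$ via $\gamma_+$ and via \eqref{eq:def p bary} applied to the empirical measure of $f$ with $x=g(i)$, yielding
$\bigl(1+\tfrac{1}{K^p\gamma_+}\bigr)d_{L_p^n(X)}(g,\B(f))^p\le\bigl(1-\tfrac{1}{K^{2p}\gamma_+}\bigr)a^p$,
which is the factor $\bigl(\tfrac{K^{2p}\gamma_+-1}{K^{2p}\gamma_++K^p}\bigr)^{1/p}$. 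Only then pass from $d_{L_p^n(X)}(g,\B(f))$ to the target variance $d_{L_p^n(X)}(g,\B(g))$: with $a'=d_{L_p^n(X)}(g,\B(f))$ and $b'=d_X(\B(f),\B(g))$ the triangle inequality gives $d_{L_p^n(X)}(g,\B(g))\le a'+b'$, while \eqref{eq:def p bary} applied to the empirical measure of $g$ with $x=\B(f)$ gives $d_{L_p^n(X)}(g,\B(g))\le K\bigl((a')^p-(b')^p\bigr)^{1/p}$, and \eqref{eq:ab} then yields the factor $\beta_p(K)$ cleanly. Note in particular that \eqref{eq:ab} is applied with $a'$, $b'$ as above, not with the source variance $a$ as your sketch suggests.
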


Assuming the validity of Lemma~\ref{lem:even t iterated version} and Lemma~\ref{lem:bound lambda by gamma} for the moment, we now show how they imply Theorem~\ref{thm:calculus gap}.

\begin{proof}[Proof of Theorem~\ref{thm:calculus gap}]
We are now assuming that $(X,d_X)$ is both $W_p$ barycentric with constant $\Gamma$ and $p$-barycentric with constant $K=1$. Under the latter assumption the conclusion of Lemma~\ref{lem:bound lambda by gamma} becomes
\begin{equation}\label{eq:K=1 version}
\lambda_p\left(A\otimes I_X^n\right)\le \left( \frac{\gamma_+(A,d_X^p)-1}{\gamma_+(A,d_X^p)+1}\right)^{1/p}.
\end{equation}
Thus in particular $\lambda_p\left(A\otimes I_X^n\right)<1$, so we may use Lemma~\ref{lem:even t iterated version} in conjunction with~\eqref{eq:K=1 version} to obtain the estimate
$$
\gamma_+(A^t,d_X^p)^{1/p}\lesssim \frac{\Gamma}{1-\left( \frac{\gamma_+(A,d_X^p)-1}{\gamma_+(A,d_X^p)+1}\right)^{2t/p}}.
$$
It now remains to note the elementary inequality
$$
\forall\, p,\gamma,t\in [1,\infty),\qquad 1-\left( \frac{\gamma-1}{\gamma+1}\right)^{2t/p}\gtrsim \min\left\{1,\frac{t}{p\gamma}\right\},
$$
which follows by considering the cases $t\ge p\gamma$ and $t\le p\gamma$ separately.
\end{proof}

We now proceed to prove Lemma~\ref{lem:even t iterated version} and Lemma~\ref{lem:bound lambda by gamma}. To this end, given $n,t\in \N$ and $T:L_p^n(X)\to L_p^n(X)$, we denote the $t$-fold iterate of $T$ by $T^{[t]}$, i.e.,
\begin{equation*}
T^{[t]}\eqdef \underbrace{T\circ \ldots \circ T}_{t\ \mathrm{times}}.
\end{equation*}
We also use the convention that $T^{[0]}$ is the identity mapping.  If $X$ is a Banach space then $(A\otimes I_X^n)^{[t]}=A^t\otimes I_X^n$, but this need not hold true when $X$ is not a Banach space. Observe that a direct iterative application of Definition~\ref{def:lambda} implies that
\begin{equation}\label{eq:lambda decay}
\lambda_p\left(T^{[t]}\right)\le \lambda_p(T)^t.
\end{equation}

\begin{lemma}\label{lem:f close to A iterate}
Fix $p,K\in [1,\infty)$. Suppose that $(X,d_X)$ is a $p$-barycentric
metric space with constant $K$. Then for every $n,t\in \N$, every
symmetric stochastic matrix $A=(a_{ij})\in M_n(\R)$ and every $f\in
L_p^n(X)$,
\begin{equation}\label{eq:distance from iterate}
d_{L_p^n(X)}\left(f,(A\otimes I_X^n)^{[t]}(f)\right)^p\le \frac{1}{n}\sum_{i=1}^n\sum_{j=1}^n (A^t)_{ij} d_X(f(i),f(j))^p.
\end{equation}
\end{lemma}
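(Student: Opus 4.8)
The plan is to reduce the claimed inequality to a pointwise-in-$i$ estimate and then prove that estimate by a short induction. Write $g_m\eqdef (A\otimes I_X^n)^{[m]}(f)$ for $m\in\{0,\dots,t\}$, so that $g_0=f$, $g_t=(A\otimes I_X^n)^{[t]}(f)$, and, directly from the definition of $A\otimes I_X^n$,
\[
g_m(j)=\B\left(\sum_{k=1}^n a_{jk}\,\d_{g_{m-1}(k)}\right)\qquad\text{for all } j\in\{1,\dots,n\},\ m\ge 1.
\]
Unwinding the left-hand side of \eqref{eq:distance from iterate}, it suffices to prove that for every $i\in\{1,\dots,n\}$ one has $d_X(f(i),g_t(i))^p\le \sum_{j=1}^n (A^t)_{ij}\, d_X(f(i),f(j))^p$, since averaging this over $i$ and dividing by $n$ is exactly \eqref{eq:distance from iterate}.

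The one point that needs care is that one cannot induct directly on this pointwise statement, because $A\otimes I_X^n$ does not interact simply with the $L_p^n(X)$ metric; the fix is to carry an auxiliary base point through the induction, exactly as in the verification of \eqref{eq:for iteration gamma=1} in the proof of Theorem~\ref{thm:bary-cotype}. Concretely, I would prove by induction on $s\in\{0,\dots,t\}$ that for every $i\in\{1,\dots,n\}$ and every $z\in X$,
\[
d_X(z,g_t(i))^p\le \sum_{j=1}^n (A^s)_{ij}\, d_X\!\left(z,g_{t-s}(j)\right)^p .
\]
For $s=0$ this holds with equality. For the inductive step, fix $s\le t-1$; since $g_{t-s}(j)=\B\left(\sum_{k=1}^n a_{jk}\,\d_{g_{t-s-1}(k)}\right)$, the weak form of the $p$-barycentric inequality \eqref{eq:def p bary} (that is, \eqref{eq:def p bary} with the rightmost term on its left-hand side discarded) gives $d_X(z,g_{t-s}(j))^p\le \sum_{k=1}^n a_{jk}\, d_X(z,g_{t-s-1}(k))^p$ for each $j$. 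Substituting this into the inductive hypothesis and using $\sum_{j=1}^n (A^s)_{ij}a_{jk}=(A^{s+1})_{ik}$ advances the induction from $s$ to $s+1$. Taking $s=t$ and then $z=f(i)$ (recall $g_0=f$) yields the desired pointwise estimate, and hence the lemma. Alternatively, the same pointwise estimate follows by applying Pisier's inequality Lemma~\ref{lem:pisier} to the martingale $M_0^{(i,t)},\dots,M_t^{(i,t)}$ built from $x_j=f(j)$ as in the proof of Theorem~\ref{thm:bary-cotype}, with $z=f(i)$, after discarding the nonnegative sum of martingale differences on the left-hand side of \eqref{eq:pisier bary}.

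I do not expect any genuine obstacle: this is a routine induction. The only things to keep straight are the index bookkeeping in the induction and the fact that only the weak form of the $p$-barycentric property is used (the constant $K$ never enters), which is why $K$ does not appear in \eqref{eq:distance from iterate}; symmetry of $A$ is likewise not needed for this particular estimate.
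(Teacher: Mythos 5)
Your proof is correct and is essentially the paper's argument: both proceed by induction on the number of iterates, using only the weak form of \eqref{eq:def p bary} together with the identity $\sum_j (A^s)_{ij}a_{jk}=(A^{s+1})_{ik}$. The only (cosmetic) difference is that the paper strengthens the inductive statement by carrying a general nonnegative matrix $B$ and proving $\sum_{i,j} b_{ij} d_X(f(i),(A\otimes I_X^n)^{[t]}(f)(j))^p\le \sum_{i,j}(BA^t)_{ij}d_X(f(i),f(j))^p$, whereas you carry an arbitrary base point $z$ exactly as in the verification of \eqref{eq:for iteration gamma=1}; both devices are interchangeable here.
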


\begin{proof}
We will prove by induction on $t$ that if $B=(b_{ij})\in M_n(\R)$ has nonnegative entries then
\begin{multline}\label{eq:B version}
\sum_{i=1}^n\sum_{j=1}^n b_{ij} d_X\left(f(i),(A\otimes I_X^n)^{[t]}(f)(j)\right)^p\\\le \sum_{i=1}^n\sum_{j=1}^n (BA^t)_{ij} d_X(f(i),f(j))^p.
\end{multline}
The desired inequality~\eqref{eq:distance from iterate} is then the special case of~\eqref{eq:B version} when $B$ is the identity matrix.

\eqref{eq:B version} holds as equality when $t=0$, so assume inductively that~\eqref{eq:B version} holds true for some $t\in \N\cup \{0\}$. Fix $i,j\in \{1,\ldots,n\}$ and consider the probability measure $\mu_j\in \P_X$ given  by
$$
\mu_j\eqdef \sum_{k=1}^n a_{jk} \d_{(A\otimes I_X^n)^{[t]}(f)(k)}.
$$
Then $\B(\mu_j)=(A\otimes I_X^n)^{[t+1]}(f)(j)$. By~\eqref{eq:def p
bary} with $\mu=\mu_j$ and $x=f(i)$,
\begin{multline}\label{use bary ij}
d_X\left(f(i),(A\otimes I_X^n)^{[t+1]}(f)(j)\right)^p\\\le \sum_{k=1}^n a_{jk} d_X\left(f(i),(A\otimes I_X^n)^{[t]}(f)(k)\right)^p.
\end{multline}
Hence,
\begin{align}
\nonumber&\sum_{i=1}^n\sum_{j=1}^n b_{ij} d_X\left(f(i),(A\otimes I_X^n)^{[t+1]}(f)(j)\right)^p
\\\label{eq:use ij B}&\le \sum_{i=1}^n\sum_{j=1}^n b_{ij}\sum_{k=1}^n a_{jk} d_X\left(f(i),(A\otimes I_X^n)^{[t]}(f)(k)\right)^p
\\\nonumber&=\sum_{i=1}^n\sum_{k=1}^n (BA)_{ik} d_X\left(f(i),(A\otimes I_X^n)^{[t]}(f)(k)\right)^p\\
\label{eq:use indunctive B}&\le \sum_{i=1}^n\sum_{k=1}^n (BA^{t+1})_{ik} d_X(f(i),f(k))^p,
\end{align}
where in~\eqref{eq:use ij B} uses~\eqref{use bary ij} and~\eqref{eq:use indunctive B} uses the inductive hypothesis.
\end{proof}

\begin{lemma}\label{lem:Gamma+1}
Fix $p,\Gamma\in [1,\infty)$. Suppose that $(X,d_X)$ is a $W_p$ barycentric metric space with constant $\Gamma$. Then for every $f,g\in L_p^n(X)$,
\begin{equation}\label{eq:Gamma+1}
\left|d_{L_p^n(X)} (f,\B(f))-d_{L_p^n(X)} (g,\B(g))\right|\le (\Gamma+1)d_{L_p^n(X)} (f,g).
\end{equation}
\end{lemma}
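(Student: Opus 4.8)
The statement is a direct consequence of the triangle inequality in the metric space $L_p^n(X)$, combined with the two defining properties of a $W_p$ barycentric space: that $\B$ is $\Gamma$-Lipschitz from $(\P_X,W_p)$ to $(X,d_X)$, and that $W_p$ of two empirical measures indexed by $\{1,\dots,n\}$ is controlled by the diagonal coupling. The only thing to keep straight is that in the quantity $d_{L_p^n(X)}(f,\B(f))$ the point $\B(f)\in X$ is tacitly regarded as the constant function in $L_p^n(X)$ taking the value $\B(f)$, so that distances between such constant functions reduce to distances in $X$.

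\textbf{Key steps.} First I would apply the triangle inequality in $L_p^n(X)$, viewing $\B(f)$ and $\B(g)$ as constant functions, to get
$$
d_{L_p^n(X)}(f,\B(f))\le d_{L_p^n(X)}(f,g)+d_{L_p^n(X)}(g,\B(g))+d_{L_p^n(X)}(\B(g),\B(f)),
$$
and observe $d_{L_p^n(X)}(\B(g),\B(f))=d_X(\B(f),\B(g))$ since both are constant functions. Next I would bound $d_X(\B(f),\B(g))$: by definition $\B(f)=\B\bigl(\frac1n\sum_i\d_{f(i)}\bigr)$ and $\B(g)=\B\bigl(\frac1n\sum_i\d_{g(i)}\bigr)$, so \eqref{eq:Gamma Wp Lip} gives $d_X(\B(f),\B(g))\le\Gamma W_p\bigl(\frac1n\sum_i\d_{f(i)},\frac1n\sum_i\d_{g(i)}\bigr)$. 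The measure $\pi=\frac1n\sum_{i=1}^n\d_{(f(i),g(i))}$ is a valid coupling of these two empirical measures (this remains true even if $f$ or $g$ is not injective, since the marginals only depend on accumulated mass), hence $W_p\bigl(\frac1n\sum_i\d_{f(i)},\frac1n\sum_i\d_{g(i)}\bigr)^p\le\frac1n\sum_{i=1}^n d_X(f(i),g(i))^p=d_{L_p^n(X)}(f,g)^p$. Combining, $d_X(\B(f),\B(g))\le\Gamma\, d_{L_p^n(X)}(f,g)$, and substituting back yields $d_{L_p^n(X)}(f,\B(f))-d_{L_p^n(X)}(g,\B(g))\le(\Gamma+1)d_{L_p^n(X)}(f,g)$. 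Finally, swapping the roles of $f$ and $g$ gives the reverse inequality, and together they give \eqref{eq:Gamma+1}.

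\textbf{Main obstacle.} There is essentially no obstacle here; the lemma is a routine bookkeeping consequence of the triangle inequality and the $W_p$ barycentric hypothesis. The only point that deserves an explicit sentence is the verification that the diagonal coupling $\frac1n\sum_i\d_{(f(i),g(i))}$ has the correct marginals, so that $W_p$ between the two empirical measures is at most $d_{L_p^n(X)}(f,g)$; everything else is immediate.
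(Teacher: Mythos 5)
Your proposal is correct and follows essentially the same route as the paper: both reduce the claim to the bound $d_X(\B(f),\B(g))\le \Gamma\, d_{L_p^n(X)}(f,g)$ (obtained from \eqref{eq:Gamma Wp Lip} via the diagonal coupling of the two empirical measures) plus the estimate $|d_{L_p^n(X)}(f,\B(f))-d_{L_p^n(X)}(g,\B(g))|\le d_{L_p^n(X)}(f,g)+d_X(\B(f),\B(g))$. The only cosmetic difference is that you invoke the triangle inequality directly in the metric space $L_p^n(X)$, whereas the paper applies it pointwise in $X$ and then uses Minkowski's inequality in $L_p^n(\R)$; these amount to the same computation.
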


\begin{proof} By~\eqref{eq:Gamma Wp Lip} we have
\begin{equation}\label{eq:use wp lip fg}
d_X(\B(f),\B(g))\le \Gamma\cdot d_{L_p^n(X)}(f,g).
\end{equation}
By the triangle inequality in $L_p^n(\R)$ we have
\begin{multline}\label{eq:ellpn triangle}
\left|d_{L_p^n(X)} (f,\B(f))-d_{L_p^n(X)} (g,\B(g))\right|\\\le \left(\frac{1}{n}\sum_{i=1}^n \left|d_X(f(i),\B(f))-d_X(g(i),\B(g))\right|^p\right)^{1/p}.
\end{multline}
For every $i\in \{1,\ldots, n\}$ the triangle inequality in $(X,d_X)$ implies that
$$
\left|d_X(f(i),\B(f))-d_X(g(i),\B(g))\right|\le d_X(f(i),g(i))+d_X(\B(f),\B(g)).
$$
In combination with~\eqref{eq:ellpn triangle} and another application of the triangle inequality in $L_p^n(\R)$ we deduce that
$$
\left|d_{L_p^n(X)} (f,\B(f))-d_{L_p^n(X)} (g,\B(g))\right|\le d_{L_p^n(X)}(f,g)+d_X(\B(f),\B(g)).
$$
Due to~\eqref{eq:use wp lip fg}, this implies the desired inequality~\eqref{eq:Gamma+1}.
\end{proof}

\begin{lemma}\label{lem:even t}
Fix $p,K,\Gamma\in [1,\infty)$ and $n,t\in \N$. Suppose that $(X,d_X)$ is a metric space that is both $W_p$ barycentric with constant $\Gamma$ and $p$-barycentric with constant $K$.  Let $A\in M_n(\R)$ be  a symmetric stochastic matrix such that $\lambda_p\left((A\otimes I_X^n)^{[2t]}\right)<1$. Then
\begin{equation}\label{eq:gamma+ upper in terms of lambda}
\gamma_+(A^t,d_X^p)\le \left(\Gamma+ \frac{4(\Gamma+1)}{1-\lambda_p\left((A\otimes I_X^n)^{[2t]}\right)} \right)^p.
\end{equation}
\end{lemma}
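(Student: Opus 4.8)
The plan is to unwind the definition of $\gamma_+(A^t,d_X^p)$ and, given points $x_1,\dots,x_n,y_1,\dots,y_n\in X$, to exhibit an auxiliary point of $X$ that is simultaneously close (in the $L_p^n(X)$ sense) to the constant functions built from the $x_i$'s and from the $y_j$'s, thereby controlling the double average $\frac1{n^2}\sum_{i,j}d_X(x_i,y_j)^p$. Concretely, let $f,g\in L_p^n(X)$ be given by $f(i)=x_i$ and $g(j)=y_j$. The triangle inequality in $L_p^n(X)$ gives, for the barycenters $\B(f),\B(g)$,
\begin{equation*}
\left(\frac1{n^2}\sum_{i=1}^n\sum_{j=1}^n d_X(x_i,y_j)^p\right)^{1/p}\le d_{L_p^n(X)}(f,\B(f))+d_X(\B(f),\B(g))+d_{L_p^n(X)}(g,\B(g)),
\end{equation*}
and by the $W_p$ barycentric hypothesis \eqref{eq:Gamma Wp Lip} the middle term is at most $\Gamma\,d_{L_p^n(X)}(f,g)=\Gamma(\frac1n\sum_i d_X(x_i,y_i)^p)^{1/p}$. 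Since one is free to relabel, in the definition of $\gamma_+$ one may assume $d_{L_p^n(X)}(f,g)$ is as small as one likes relative to the right-hand side of \eqref{eq:def gamma+}; more precisely, $\gamma_+(A^t,d_X^p)$ is unchanged if we restrict to configurations where the two families interleave, so the dominant contribution will come from the two ``deviation'' terms $d_{L_p^n(X)}(f,\B(f))$ and $d_{L_p^n(X)}(g,\B(g))$.

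The heart of the argument is to bound a deviation term such as $d_{L_p^n(X)}(f,\B(f))$ by the quantity $\frac1n\sum_{i,j}a_{ij}d_X(x_i,y_j)^p$ that appears on the right of \eqref{eq:def gamma+}. Here is where the spectral-type hypothesis $\lambda_p((A\otimes I_X^n)^{[2t]})<1$ enters. Applying Lemma~\ref{lem:f close to A iterate} with the iterate $(A\otimes I_X^n)^{[2t]}$ (whose associated matrix-level object is $A^{2t}$) and with the stationarity relation $A^{2t}=(A^t)^\top A^t$ (symmetry of $A$), one gets
\begin{equation*}
d_{L_p^n(X)}\big(f,(A\otimes I_X^n)^{[2t]}(f)\big)^p\le \frac1n\sum_{i=1}^n\sum_{j=1}^n (A^{2t})_{ij}d_X(x_i,x_j)^p\le \frac{2^p}n\sum_{i,j}(A^t)_{ik}(A^t)_{kj}d_X\cdots,
\end{equation*}
which after one application of the $p$-triangle inequality (splitting $d_X(x_i,x_j)^p$ through an intermediate $y_k$) is controlled by $\frac1n\sum_{i,j}a_{ij}d_X(x_i,y_j)^p$ times a constant; this uses that $A^{2t}=A^tA^t$ factors through $A$. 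Meanwhile $\lambda_p((A\otimes I_X^n)^{[2t]})<1$ and Lemma~\ref{lem:Gamma+1} (the $(\Gamma+1)$-Lipschitz estimate for $f\mapsto d_{L_p^n(X)}(f,\B(f))$) let one compare $d_{L_p^n(X)}(f,\B(f))$ with $d_{L_p^n(X)}((A\otimes I_X^n)^{[2t]}(f),\B((A\otimes I_X^n)^{[2t]}(f)))$: by the triangle inequality and Definition~\ref{def:lambda},
\begin{equation*}
d_{L_p^n(X)}(f,\B(f))\le d_{L_p^n(X)}\big(f,(A\otimes I_X^n)^{[2t]}(f)\big)+(\Gamma+1)\,d_{L_p^n(X)}\big(f,(A\otimes I_X^n)^{[2t]}(f)\big)+\lambda_p\big((A\otimes I_X^n)^{[2t]}\big)\,d_{L_p^n(X)}(f,\B(f)),
\end{equation*}
so that $(1-\lambda_p((A\otimes I_X^n)^{[2t]}))\,d_{L_p^n(X)}(f,\B(f))\le (\Gamma+2)\,d_{L_p^n(X)}(f,(A\otimes I_X^n)^{[2t]}(f))$, i.e. $d_{L_p^n(X)}(f,\B(f))\le \frac{4(\Gamma+1)}{1-\lambda_p((A\otimes I_X^n)^{[2t]})}\cdot(\tfrac1n\sum_{i,j}a_{ij}d_X(x_i,y_j)^p)^{1/p}$ after absorbing the constants from the previous step. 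The same bound holds for $g$. Summing the three pieces and taking $p$-th powers yields the factor $(\Gamma+\frac{4(\Gamma+1)}{1-\lambda_p((A\otimes I_X^n)^{[2t]})})^p$ as in \eqref{eq:gamma+ upper in terms of lambda}.

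The step I expect to be the main obstacle is the reduction from $\frac1n\sum_{i,j}(A^{2t})_{ij}d_X(x_i,x_j)^p$ to a constant times $\frac1n\sum_{i,j}a_{ij}d_X(x_i,y_j)^p$: one must route through intermediate indices using only the single-step matrix $A$ and the interleaved family $\{y_k\}$, and keep the constants under control (this is exactly the kind of bookkeeping done in Lemma~\ref{lem:control by cesaro}, and the cleanest route may be to prove directly that $\frac1n\sum_{i,j}(A^{2t})_{ij}d_X(x_i,x_j)^p\lesssim \frac1n\sum_{i,j}a_{ij}d_X(x_i,y_j)^p+(\text{the deviation terms})$ and then rearrange). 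The bookkeeping of which distances are $x$-$x$, $x$-$y$, or $y$-$y$, and making sure the $y$-dependence is absorbed correctly so that only $\frac1n\sum_{i,j}a_{ij}d_X(x_i,y_j)^p$ survives on the right, is the delicate part; everything else is the triangle inequality plus the definitions of $\lambda_p$ and of $W_p$ barycentric.
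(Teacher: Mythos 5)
Your skeleton matches the paper's: the three-term triangle inequality through $\B(f)$ and $\B(g)$, Lemma~\ref{lem:Gamma+1} plus the contraction $\lambda_p\left((A\otimes I_X^n)^{[2t]}\right)<1$ to bound the deviation terms, and Lemma~\ref{lem:f close to A iterate} to control $d_{L_p^n(X)}\left(f,(A\otimes I_X^n)^{[2t]}(f)\right)$. But there are two genuine gaps, both concerning how you land on the right-hand side of the $\gamma_+(A^t,\cdot)$ inequality.

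First, your treatment of the middle term $d_X(\B(f),\B(g))$ does not work. You bound it by $\Gamma\, d_{L_p^n(X)}(f,g)=\Gamma\bigl(\frac1n\sum_i d_X(x_i,y_i)^p\bigr)^{1/p}$ using the diagonal coupling and then argue this can be made negligible by ``relabeling/interleaving.'' That is not legitimate: the definition of $\gamma_+(A^t,d_X^p)$ quantifies over \emph{all} configurations $x_1,\dots,x_n,y_1,\dots,y_n$, and the diagonal sum $\frac1n\sum_i d_X(x_i,y_i)^p$ is in general incomparable to $\frac1n\sum_{i,j}(A^t)_{ij}d_X(x_i,y_j)^p$ (e.g.\ when the diagonal of $A^t$ vanishes). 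The correct move, and the reason the $W_p$ barycentric hypothesis is stated in terms of Wasserstein distance rather than just $L_p^n$, is to observe that $\pi=\frac1n\sum_{i,j}(A^t)_{ij}\d_{(f(i),g(j))}$ is a coupling of $\mu_f$ and $\mu_g$ because $A^t$ is doubly stochastic; then \eqref{eq:Gamma Wp Lip} gives $d_X(\B(f),\B(g))\le\Gamma\bigl(\frac1n\sum_{i,j}(A^t)_{ij}d_X(x_i,y_j)^p\bigr)^{1/p}$ directly. This is exactly where the standalone $\Gamma$ in the final constant comes from.

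Second, you repeatedly identify the target of the reduction as $\frac1n\sum_{i,j}a_{ij}d_X(x_i,y_j)^p$ and describe the ``main obstacle'' as routing through intermediate indices ``using only the single-step matrix $A$.'' That is the wrong target: the lemma bounds $\gamma_+(A^t,d_X^p)$, so every term must be dominated by $\frac1n\sum_{i,j}(A^t)_{ij}d_X(x_i,y_j)^p$; a bound with single-step weights $a_{ij}$ would prove a statement about $\gamma_+(A,d_X^p)$ instead. The step you flag as the obstacle is in fact short once the target is right: writing $(A^{2t})_{ij}=\sum_k(A^t)_{ik}(A^t)_{kj}$ and using $d_X(x_i,x_j)^p\le 2^{p-1}\bigl(d_X(x_i,y_k)^p+d_X(y_k,x_j)^p\bigr)$ with these weights yields $\sum_{i,j}(A^{2t})_{ij}d_X(x_i,x_j)^p\le 2^p\sum_{i,j}(A^t)_{ij}d_X(x_i,y_j)^p$, which is what feeds the $\frac{4(\Gamma+1)}{1-\lambda_p}$ factor. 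With these two corrections your argument becomes the paper's proof.
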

\begin{proof}
For every $f,g:\{1,\ldots,n\}\to X$ we have
\begin{multline}\label{eq:three terms n2}
\left(\frac{1}{n^2}\sum_{i=1}^n\sum_{j=1}^n d_X(f(i),g(j))^p\right)^{1/p}\\\le d_X(\B(f),\B(g))+d_{L_p^n(X)}(f,\B(f))+d_{L_p^n(X)}(g,\B(g)).
\end{multline}
We proceed to bound each of the terms on the right hand side of~\eqref{eq:three terms n2} separately.

First, define $\mu_f,\mu_g\in \P_X$ by
$$
\mu_f\eqdef \frac{1}{n}\sum_{i=1}^n \d_{f(i)}\qquad\mathrm{and}\qquad \mu_g\eqdef \frac{1}{n}\sum_{i=1}^n \d_{g(i)}.
$$
Then $\B(f)=\B(\mu_f)$ and $\B(g)=\B(\mu_g)$. If we define $\pi\in \P_{X\times X}$ by
$$
\pi=\frac{1}{n}\sum_{i=1}^n\sum_{j=1}^n (A^t)_{ij} \d_{(f(i),g(j))},
$$
then, since $A^t$ is a symmetric stochastic matrix,  $\pi\in \Pi(\mu_f,\mu_g)$, i.e., $\pi$ is a coupling of $\mu_f$ and $\mu_g$. It therefore follows from~\eqref{eq:Gamma Wp Lip} that
\begin{equation}\label{eq:estimated first of three}
d_X(\B(f),\B(g))\le \Gamma\left(\frac{1}{n}\sum_{i=1}^n\sum_{j=1}^n (A^t)_{ij} d_X(f(i),g(j))^p\right)^{1/p}.
\end{equation}

Next, since
\begin{multline*}
d_{L_p^n(X)}\left((A\otimes I_X^n)^{[2t]}(f),\B\left((A\otimes I_X^n)^{[2t]}(f)\right)\right)\\\le \lambda_p\left((A\otimes I_X^n)^{[2t]}\right) d_{L_p^n(X)}(f,\B(f)),
\end{multline*}
and $\lambda_p\left((A\otimes I_X^n)^{[2t]}\right)<1$, we have
\begin{multline}\label{eq:divide 1-lambda}
 \frac{d_{L_p^n(X)}(f,\B(f))-d_{L_p^n(X)}\left((A\otimes I_X^n)^{[2t]}(f),\B\left((A\otimes I_X^n)^{[2t]}(f)\right)\right)}{1-\lambda_p\left((A\otimes I_X^n)^{[2t]}\right)}\\\ge d_{L_p^n(X)}(f,\B(f)).
\end{multline}
Now,
\begin{align}
&\nonumber d_{L_p^n(X)}(f,\B(f))-d_{L_p^n(X)}\left((A\otimes I_X^n)^{[2t]}(f),\B\left((A\otimes I_X^n)^{[2t]}(f)\right)\right)\\\label{eq:use lemma gamma+1}&\le (\Gamma+1) d_{L_p^n(X)}\left(f,(A\otimes I_X^n)^{[2t]}(f)\right)\\
\label{eq:use lemma dist to iteration}&\le (\Gamma+1)\left(\frac{1}{n}\sum_{i=1}^n\sum_{j=1}^n (A^{2t})_{ij} d_X(f(i),f(j))^p\right)^{1/p}.
\end{align}
where~\eqref{eq:use lemma gamma+1} uses Lemma~\ref{lem:Gamma+1} and~\eqref{eq:use lemma dist to iteration} uses Lemma~\ref{lem:f close to A iterate}.

Observe that
\begin{align}\label{eq:2t=t+t}
\nonumber&\sum_{i=1}^n\sum_{j=1}^n (A^{2t})_{ij} d_X(f(i),f(j))^p\\\nonumber&=\sum_{i=1}^n\sum_{j=1}^n \left(\sum_{k=1}^n (A^t)_{ik}(A^t)_{kj}\right)d_X(f(i),f(j))^p\\
\nonumber&\le 2^{p-1}\sum_{i=1}^n\sum_{j=1}^n\sum_{k=1}^n (A^t)_{ik}(A^t)_{kj}\big(d_X(f(i),g(k))^p+d_X(g(k),f(j))^p\big)\\
&= 2^p\sum_{i=1}^n\sum_{j=1}^n (A^t)_{ij} d_X(f(i),g(j))^p.
\end{align}
By combining~\eqref{eq:divide 1-lambda} with~\eqref{eq:use lemma dist to iteration} and~\eqref{eq:2t=t+t} we have,
\begin{multline}\label{eq:d f Bf}
d_{L_p^n(X)}(f,\B(f))\\\le \frac{2(\Gamma+1)}{1-\lambda_p\left((A\otimes I_X^n)^{[2t]}\right)} \left(\frac{1}{n}\sum_{i=1}^n\sum_{j=1}^n (A^t)_{ij} d_X(f(i),g(j))^p\right)^{1/p}.
\end{multline}
By the same reasoning,
\begin{multline}\label{eq:d g Bg}
d_{L_p^n(X)}(g,\B(g))\\\le \frac{2(\Gamma+1)}{1-\lambda_p\left((A\otimes I_X^n)^{[2t]}\right)} \left(\frac{1}{n}\sum_{i=1}^n\sum_{j=1}^n (A^t)_{ij} d_X(f(i),g(j))^p\right)^{1/p}.
\end{multline}

Finally, a substitution of~\eqref{eq:estimated first of three}, \eqref{eq:d f Bf} and~\eqref{eq:d g Bg} into~\eqref{eq:three terms n2} shows that
\begin{multline}\label{eq:for def gamma+}
\frac{1}{n^2}\sum_{i=1}^n\sum_{j=1}^n d_X(f(i),g(j))^p\\\le\left(\Gamma+ \frac{4(\Gamma+1)}{1-\lambda_p\left((A\otimes I_X^n)^{[2t]}\right)} \right)^p\cdot\frac{1}{n}\sum_{i=1}^n\sum_{j=1}^n (A^t)_{ij} d_X(f(i),g(j))^p.
\end{multline}
Since~\eqref{eq:for def gamma+} holds true for every $f,g:\{1,\ldots,n\}\to X$, the desired estimate~\eqref{eq:gamma+ upper in terms of lambda} now follows by recalling the definition of $\gamma_+(A,d_X^p)$.
\end{proof}

\begin{proof}[Proof of Lemma~\ref{lem:even t iterated version}] The desired estimate~\eqref{eq:Gamma but no K} is a consequence of~\eqref{eq:lambda decay} and Lemma~\ref{lem:even t}.
\end{proof}

We now proceed to prove Lemma~\ref{lem:bound lambda by gamma}. Recalling the definition of $\beta_p(K)$ in~\eqref{eq:def beta}, we first establish the following estimate.

\begin{lemma}\label{lem:use beta}
Fix $p,K\in [1,\infty)$. Suppose that $(X,d_X)$ is a $p$-barycentric metric space with constant $K$. Then for every $n\in \N$, every symmetric stochastic matrix $A\in M_n(\R)$ and every $f\in L_p^n(X)$ we have
\begin{multline}\label{eq:beta ineq}
d_{L_p^n(X)}\left((A\otimes I_X^n)(f),\B\left((A\otimes I_X^n)(f)\right)\right)\\\le \beta_p(K)\cdot  d_{L_p^n(X)} \left((A\otimes I_X^n)(f),\B(f)\right).
\end{multline}
\end{lemma}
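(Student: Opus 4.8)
The plan is to apply the defining $p$-barycentric inequality \eqref{eq:def p bary} exactly once, with a well-chosen base point, and then reduce everything to the elementary inequality \eqref{eq:ab}. Abbreviate $g\eqdef(A\otimes I_X^n)(f)\in L_p^n(X)$; by the abuse of notation fixed above, $\B(g)$ is the barycenter of the uniform probability measure $\mu\eqdef\frac1n\sum_{i=1}^n\delta_{g(i)}$ supported on the values of $g$. Note that the matrix $A$, and in particular the hypothesis that it is symmetric and stochastic, plays no further role in this lemma: all that matters is that $g$ is some element of $L_p^n(X)$ and that $\B(g)$ is the barycenter of its empirical measure.

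First I would apply \eqref{eq:def p bary} with $x=\B(f)$ and with $\mu$ as above. Since $\frac1n\sum_{i=1}^n d_X(\B(g),g(i))^p=d_{L_p^n(X)}(g,\B(g))^p$ and $\frac1n\sum_{i=1}^n d_X(\B(f),g(i))^p=d_{L_p^n(X)}(g,\B(f))^p$, this yields
$$d_X(\B(f),\B(g))^p+\frac1{K^p}\,d_{L_p^n(X)}(g,\B(g))^p\le d_{L_p^n(X)}(g,\B(f))^p.$$
Writing $a\eqdef d_{L_p^n(X)}(g,\B(f))$ and $b\eqdef d_X(\B(f),\B(g))$, this one inequality already provides the two facts I need: $b\le a$, and $d_{L_p^n(X)}(g,\B(g))\le K(a^p-b^p)^{1/p}$.

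Next I would bring in the triangle inequality in $L_p^n(X)$. Since $\B(f)$ and $\B(g)$ are points of $X$, regarded as constant functions they are at $L_p^n(X)$-distance $d_X(\B(f),\B(g))=b$ from each other, so $d_{L_p^n(X)}(g,\B(g))\le d_{L_p^n(X)}(g,\B(f))+b=a+b$. Combining the two upper bounds gives $d_{L_p^n(X)}(g,\B(g))\le\min\{a+b,\,K(a^p-b^p)^{1/p}\}$, and since $0\le b\le a$ the right-hand side is at most $\beta_p(K)\,a=\beta_p(K)\,d_{L_p^n(X)}\big((A\otimes I_X^n)(f),\B(f)\big)$ by \eqref{eq:ab}. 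This is exactly \eqref{eq:beta ineq}.

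I do not anticipate a genuine obstacle: the argument is one application of \eqref{eq:def p bary} followed by a triangle inequality and the definition of $\beta_p(K)$. The only points requiring a little care are (i) tracking the two auxiliary scalars $a$ and $b$ and observing that the inequality $b\le a$ needed to invoke \eqref{eq:ab} is itself a byproduct of \eqref{eq:def p bary}, and (ii) noting that the $L_p^n(X)$-distance between the two constant functions $\B(f)$ and $\B(g)$ collapses to $d_X(\B(f),\B(g))$.
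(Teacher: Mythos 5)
Your proposal is correct and follows essentially the same route as the paper's proof: one application of \eqref{eq:def p bary} with $\mu$ the empirical measure of $(A\otimes I_X^n)(f)$ and $x=\B(f)$, the triangle inequality in $L_p^n(X)$, and then \eqref{eq:ab}. The only cosmetic difference is that you define $b$ directly as $d_X(\B(f),\B(g))$ rather than as the $L_p^n(X)$-distance between the two constant functions, which is the same quantity; your explicit observation that $b\le a$ is a byproduct of \eqref{eq:def p bary} is a nice touch that the paper leaves implicit.
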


\begin{proof}
Write
\begin{equation}\label{eq:def a}
a\eqdef d_{L_p^n(X)} \left((A\otimes I_X^n)(f),\B(f)\right),
\end{equation}
and
\begin{equation}\label{eq:def b}
b\eqdef d_{L_p^n(X)} \left(\B(f),\B\left((A\otimes I_X^n)(f)\right)\right).
\end{equation}
Then by the triangle inequality in $L_p^n(X)$,
\begin{equation}\label{eq:a+b}
d_{L_p^n(X)}\left((A\otimes I_X^n)(f),\B\left((A\otimes I_X^n)(f)\right)\right)\le a+b.
\end{equation}
Next, define $\nu\in \P_X$ by
\begin{equation}\label{eq:def nu}
\nu\eqdef \frac{1}{n}\sum_{i=1}^n \d_{(A\otimes I_X^n)(f)(i)}.
\end{equation}
Then
\begin{equation}\label{eq:B mu is}
\B(\nu)=\B\left((A\otimes I_X^n)(f)\right),
\end{equation}
and therefore
\begin{multline}\label{eq:K term compute}
d_{L_p^n(X)}\left((A\otimes I_X^n)(f),\B\left((A\otimes I_X^n)(f)\right)\right)\\=\left(\int_X d_X(y,\B(\nu))^pd\nu(y)\right)^{1/p}.
\end{multline}
It also follows from~\eqref{eq:def a} and~\eqref{eq:def nu} that
\begin{equation}\label{eq:a with mu}
a=\left(\int_X d_X(y,\B(f))^pd\nu(y)\right)^{1/p}.
\end{equation}
By combining~\eqref{eq:def b}, \eqref{eq:B mu is}, \eqref{eq:K term compute} and~\eqref{eq:a with mu}, an application of~\eqref{eq:def p bary} to the measure $\mu=\nu$ with $x=\B(f)$ yields the estimate
\begin{equation}\label{eq:ap on right}
b^p+\frac{1}{K^p}\cdot d_{L_p^n(X)}\left((A\otimes I_X^n)(f),\B\left((A\otimes I_X^n)(f)\right)\right)^p \le a^p.
\end{equation}
The desired estimate~\eqref{eq:beta ineq} now follows by combining~\eqref{eq:a+b} and~\eqref{eq:ap on right} with~\eqref{eq:ab}.
\end{proof}

\begin{lemma}\label{lem:A norm 1 from B}
Fix $p,K\in [1,\infty)$. Suppose that $(X,d_X)$ is a $p$-barycentric
metric space with constant $K$. Then for every $n\in \N$, every
symmetric stochastic matirx $A=(a_{ij})\in M_n(\R)$ and every $f\in
L_p^n(X)$ we have
\begin{multline}\label{eq:contraction}
d_{L_p^n(X)}\left((A\otimes I_X^n)(f),\B(f)\right)\\\le\left(\frac{K^{2p}\gamma_+(A,d_X^p)-1}{K^{2p}\gamma_+(A,d_X^p)+K^p}\right)^{1/p} d_{L_p^n(X)}\left(f,\B(f)\right).
\end{multline}
\end{lemma}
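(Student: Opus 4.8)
The plan is to obtain~\eqref{eq:contraction} from two applications of the $p$-barycentric inequality~\eqref{eq:def p bary}, combined with the definition~\eqref{eq:def gamma+} of $\gamma_+(A,d_X^p)$, followed by a one-line rearrangement. Write $g\eqdef (A\otimes I_X^n)(f)$, so that $g(i)=\B\left(\sum_{j=1}^n a_{ij}\d_{f(j)}\right)$ for every $i\in\{1,\dots,n\}$, and set $u\eqdef d_{L_p^n(X)}(g,\B(f))^p$, $v\eqdef d_{L_p^n(X)}(f,\B(f))^p$ and $\gamma\eqdef \gamma_+(A,d_X^p)$. We may assume $\gamma<\infty$, since otherwise the first step below already yields~\eqref{eq:contraction} with its right-hand side coefficient understood as the limiting value $1$.

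First I would apply~\eqref{eq:def p bary} with $x=\B(f)$ and the probability measure $\mu=\sum_{j=1}^n a_{ij}\d_{f(j)}$, whose barycenter is exactly $g(i)$; summing the resulting inequality over $i$ and dividing by $n$, and using that $A$ symmetric and stochastic forces $\sum_{i=1}^n a_{ij}=1$ for every $j$, the right-hand side collapses to $\frac1n\sum_{j=1}^n d_X(\B(f),f(j))^p=v$. This gives
$$
u+\frac{1}{K^p}\cdot\frac1n\sum_{i=1}^n\sum_{j=1}^n a_{ij}\,d_X(g(i),f(j))^p\le v.
$$
Next I would apply~\eqref{eq:def p bary} with $\mu=\frac1n\sum_{j=1}^n\d_{f(j)}$ (barycenter $\B(f)$) and $x=g(i)$, and average over $i$; the term $\frac1n\sum_{j=1}^n d_X(\B(f),f(j))^p$ does not depend on $i$ and so survives unchanged, yielding
$$
u+\frac{1}{K^p}v\le \frac{1}{n^2}\sum_{i=1}^n\sum_{j=1}^n d_X(g(i),f(j))^p.
$$
Finally I would feed the $2n$ points $g(1),\dots,g(n)$ and $f(1),\dots,f(n)$ into~\eqref{eq:def gamma+}, which bounds $\frac{1}{n^2}\sum_{i,j}d_X(g(i),f(j))^p$ by $\frac{\gamma}{n}\sum_{i,j}a_{ij}d_X(g(i),f(j))^p$; combined with the second display this shows $\frac1n\sum_{i,j}a_{ij}d_X(g(i),f(j))^p\ge \frac1\gamma\left(u+\frac{1}{K^p}v\right)$.

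Substituting this last bound into the first display gives $u\left(1+\frac{1}{K^p\gamma}\right)\le v\left(1-\frac{1}{K^{2p}\gamma}\right)$, which rearranges to $u\le \frac{K^{2p}\gamma-1}{K^{2p}\gamma+K^p}\,v$; taking $p$-th roots is precisely~\eqref{eq:contraction}. I do not expect any genuine obstacle here: the two points that need attention are that symmetry of $A$ is exactly what makes the right-hand side of the first step equal to $v$, and that in the second step the middle term is constant in $i$, so it is unaffected by the averaging over $i$.
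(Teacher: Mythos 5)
Your proposal is correct and is essentially identical to the paper's proof: the same two applications of~\eqref{eq:def p bary} (once with $\mu=\sum_j a_{ij}\d_{f(j)}$, $x=\B(f)$, and once with $\mu=\frac1n\sum_j\d_{f(j)}$, $x=(A\otimes I_X^n)(f)(i)$), the same use of~\eqref{eq:def gamma+}, and the same final rearrangement. No issues.
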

\begin{proof}
For every $i\in \{1,\ldots,n\}$ define $\nu_i\in \P_X$ by
\begin{equation}\label{eq:def mui}
\nu_i\eqdef \sum_{i=1}^na_{ij} \d_{f(j)}.
\end{equation}
Thus
\begin{equation*}
(A\otimes I_X^n)(f)(i)=\B(\nu_i),
\end{equation*}
\begin{equation*}
\int_X d_X(y,\B(\nu_i))^pd\nu_i(y)=\sum_{j=1}^n a_{ij} d_X\left(f(j),(A\otimes I_X^n)(f)(i)\right)^p,
\end{equation*}
and
\begin{equation*}
\int_X d_X(y,\B(f))^pd\nu_i(y)=\sum_{j=1}^n a_{ij} d_X(f(j),\B(f))^p.
\end{equation*}
An application of~\eqref{eq:def p bary} with $\mu=\nu_i$ and $x=\B(f)$ therefore implies that
\begin{multline}\label{eq:to average K}
d_X\left((A\otimes I_X^n)(f)(i),\B(f)\right)^p+\frac{1}{K^p}\sum_{j=1}^n a_{ij} d_X\left(f(j),(A\otimes I_X^n)(f)(i)\right)^p \\\le \sum_{j=1}^n a_{ij} d_X(f(j),\B(f))^p.
\end{multline}
By averaging~\eqref{eq:to average K} over $i\in \{1,\ldots,n\}$ we conclude that
\begin{align}\label{eq:averaged p convex mui}
\nonumber&d_{L_p^n(X)}\left((A\otimes I_X^n)(f),\B(f)\right)^p\\\nonumber&\quad+\frac{1}{nK^p}\sum_{i=1}^n\sum_{j=1}^n a_{ij} d_X\left(f(j),(A\otimes I_X^n)(f)(i)\right)^p
\\&\le \frac{1}{n} \sum_{i=1}^n \sum_{j=1}^n a_{ij} d_X(f(j),\B(f))^p=d_{L_p^n(X)}\left(f,\B(f)\right)^p.
\end{align}

Next, the definition of $\gamma_+(A,d_X^p)$ implies that
\begin{multline}\label{eq:use gamma+}
\frac{1}{n}\sum_{i=1}^n\sum_{j=1}^n a_{ij} d_X\left(f(j),(A\otimes I_X^n)(f)(i)\right)^p\\
\ge
\frac{1}{\gamma_+(A,d_X^p)}\cdot\frac{1}{n^2}\sum_{i=1}^n\sum_{j=1}^n d_X\left(f(j),(A\otimes I_X^n)(f)(i)\right)^p.
\end{multline}
For every $i\in \{1,\ldots, n\}$, an application of~\eqref{eq:def p bary} with $\mu= \frac{1}{n}\sum_{j=1}^n \d_{f(j)}$ and $x=(A\otimes I_X^n)(f)(i)$ implies the estimate
\begin{multline}\label{eq:use p convex again}
d_X\left((A\otimes I_X^n)(f)(i),\B(f)\right)^p+\frac{1}{K^p} \cdot d_{L_p^n(X)} (f,\B(f))^p\\\le \frac{1}{n}\sum_{j=1}^n d_X\left(f(j),(A\otimes I_X^n)(f)(i)\right)^p.
\end{multline}
By averaging~\eqref{eq:use p convex again} over $i\in \{1,\ldots,n\}$ we see that
\begin{multline}\label{eq:averaged second use of p convex}
\frac{1}{n^2}\sum_{i=1}^n\sum_{j=1}^n d_X\left(f(j),(A\otimes I_X^n)(f)(i)\right)^p\\\ge d_{L_p^n(X)}\left((A\otimes I_X^n)(f),\B(f)\right)^p+ \frac{1}{K^p} \cdot d_{L_p^n(X)} (f,\B(f))^p.
\end{multline}
By substituting~\eqref{eq:averaged second use of p convex} into~\eqref{eq:use gamma+}, and plugging the resulting estimate into~\eqref{eq:averaged p convex mui}, we conclude that
\begin{multline*}
\left(1+\frac{1}{K^p\gamma_+(A,d_X^p)}\right)d_{L_p^n(X)}\left((A\otimes I_X^n)(f),\B(f)\right)^p\\
\le \left(1-\frac{1}{K^{2p}\gamma_+(A,d_X^p)}\right)d_{L_p^n(X)} (f,\B(f))^p,
\end{multline*}
which simplifies to give the desired inequality~\eqref{eq:contraction}.
\end{proof}

\begin{proof}[Proof of Lemma~\ref{lem:bound lambda by gamma}]
Simply combine Lemma~\ref{lem:use beta} and Lemma~\ref{lem:A norm 1 from B}.
\end{proof}

\section{Proof of Theorem~\ref{thm:general keith}}\label{sec:ext}

Lemma~\ref{lem:CN} below plays an important role in our proof of
Theorem~\ref{thm:general keith}. It was proved by the second named
author in collaboration with M. Cs\"ornyei (2001); we thank her for
letting us include it here.

\begin{lemma}\label{lem:CN} Fix $m,n\in \N$ and $p\in [1,\infty)$. Let $B=(b_{ij})\in M_{n\times m}(\R)$ and $C=(c_{ij})\in M_n(\R)$ be stochastic matrices (of dimensions $n$ by $m$ and $n$ by $n$, respectively). Fix $\pi \in \Delta^{n-1}$ and suppose that $C$ is reversible relative to $\pi$. Then for every metric space $(X,d_X)$ and every $z_1,\ldots,z_m\in X$ there exist $w_1,\ldots,w_n\in X$ such that
\begin{multline}\label{eq:BC goal}
\max\left\{\sum_{i=1}^n\sum_{r=1}^m \pi_i b_{ir} d_X(w_i,z_r)^p,\sum_{i=1}^n\sum_{j=1}^n \pi_i c_{ij} d_X(w_i,w_j)^p\right\}\\
\le 3^p \sum_{r=1}^m\sum_{s=1}^m (B^*D_\pi CB)_{rs} d_X(z_r,z_s)^p,
\end{multline}
where \begin{equation}\label{eq:def diagonal}
D_\pi=
\begin{pmatrix} \pi_1 & 0 &  \dots&0 \\
  0 & \pi_2& \ddots & \vdots\\
            \vdots & \ddots &  \ddots &0\\
              0 & \dots  &0&\pi_n
                       \end{pmatrix}\in M_n(\R).\end{equation}
\end{lemma}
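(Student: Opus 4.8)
The plan is to argue by the probabilistic method: choose each $w_i$ at random among $z_1,\dots,z_m$, and show that the expectation of the \emph{sum} of the two quantities appearing inside the maximum in~\eqref{eq:BC goal} is at most the right-hand side of~\eqref{eq:BC goal}. Write $\Sigma\eqdef\sum_{r,s=1}^m(B^*D_\pi CB)_{rs}d_X(z_r,z_s)^p$, and note that $B^*D_\pi CB$ has nonnegative entries that sum to $1$, because $B$ and $C$ are stochastic. Let $\phi\colon\{1,\dots,n\}\to\{1,\dots,m\}$ be a random map whose coordinates $\phi(1),\dots,\phi(n)$ are mutually independent with $\Pr[\phi(i)=r]=b_{ir}$, and set $w_i\eqdef z_{\phi(i)}$. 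Computing expectations directly,
\[
\E\Bigl[\sum_{i=1}^n\sum_{r=1}^m\pi_ib_{ir}d_X(w_i,z_r)^p\Bigr]=\sum_{i=1}^n\pi_i\sum_{r,s=1}^m b_{ir}b_{is}d_X(z_r,z_s)^p,
\]
a quantity that we denote by $T_1$; and, since $\phi(i),\phi(j)$ are independent for $i\neq j$ while the terms with $i=j$ vanish,
\[
\E\Bigl[\sum_{i=1}^n\sum_{j=1}^n\pi_ic_{ij}d_X(w_i,w_j)^p\Bigr]=\sum_{i\neq j}\pi_ic_{ij}\sum_{r,s=1}^m b_{ir}b_{js}d_X(z_r,z_s)^p\le\Sigma.
\]

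The crux is the estimate $T_1\le 2^p\Sigma$. To prove it, fix $i,j\in\{1,\dots,n\}$ and insert an intermediate index $u$: by the triangle inequality and convexity of $t\mapsto t^p$ on $[0,\infty)$, one has $d_X(z_r,z_s)^p\le 2^{p-1}\bigl(d_X(z_r,z_u)^p+d_X(z_u,z_s)^p\bigr)$ for all $r,s,u\in\{1,\dots,m\}$; multiplying by $b_{ir}b_{is}b_{ju}\ge 0$, summing over $r,s,u$, and using that each row of $B$ sums to $1$ gives
\[
\sum_{r,s=1}^m b_{ir}b_{is}d_X(z_r,z_s)^p\le 2^p\sum_{r,s=1}^m b_{ir}b_{js}d_X(z_r,z_s)^p.
\]
Since $\sum_{j=1}^n c_{ij}=1$, multiplying this inequality by $\pi_ic_{ij}$ and summing over $i$ and $j$ yields exactly $T_1\le 2^p\Sigma$. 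Consequently the expectation of the sum of the two quantities inside the maximum in~\eqref{eq:BC goal} is at most $(2^p+1)\Sigma\le 3^p\Sigma$, the final inequality being elementary and valid for every $p\in[1,\infty)$ (it is an equality at $p=1$). As both of these quantities are nonnegative and $\phi$ takes values in the finite set $\{1,\dots,m\}^n$, there is a realization of $\phi$ for which their sum --- hence a fortiori their maximum --- is at most $3^p\Sigma$. Taking $w_1,\dots,w_n$ to be the corresponding points $z_{\phi(i)}\in X$ establishes~\eqref{eq:BC goal}.

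I expect the estimate $T_1\le 2^p\Sigma$ to be the only substantive step. Intuitively, the first summand on the left-hand side of~\eqref{eq:BC goal} forces $w_i$ to approximate, in $b_{i\cdot}$-average, all of the points $z_r$ simultaneously, so its expected cost $T_1$ is a measure of the ``spread'' of the measures $b_{i\cdot}$; the content of the inequality is that this spread is controlled by $\Sigma$, which one sees through the one-intermediate-point triangle inequality above. The remaining steps are routine bookkeeping; it is worth noting that the reversibility of $C$ relative to $\pi$ is not actually needed for this argument, although it is of course harmless to assume it.
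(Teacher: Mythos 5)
Your proof is correct, and it takes a genuinely different route from the paper's. The paper's argument is deterministic: it isometrically embeds $\{z_1,\ldots,z_m\}$ into $\ell_\infty^m$, forms the linear averages $y_i=\sum_{r}b_{ir}f(z_r)$ in that ambient space, selects $w_i$ as a nearest point of $\{f(z_1),\ldots,f(z_m)\}$ to $y_i$, and then runs three-point triangle inequalities through the $y_i$'s together with convexity of $v\mapsto\|v\|_\infty^p$; at one step it uses that $\pi$ is stationary for $C$ (a consequence of reversibility plus stochasticity) to collapse $\sum_i\pi_ic_{ij}$ to $\pi_j$. You instead sample $w_i=z_{\phi(i)}$ independently with law $b_{i\cdot}$ and apply the first-moment method, so that the only substantive estimate is $T_1\le 2^p\Sigma$, which you obtain by a single intermediate-point triangle inequality; all expectations are computed exactly and no auxiliary embedding is needed. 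Your approach buys three small things: it avoids the $\ell_\infty^m$ embedding and the nearest-point selection entirely; it yields the marginally sharper constant $(2^p+1)^{1/p}\le 3$ (with equality at $p=1$) for the sum, not just the maximum, of the two quantities; and, as you correctly note, it never invokes the reversibility of $C$, so the lemma holds for an arbitrary stochastic $C$ and arbitrary $\pi\in\Delta^{n-1}$. The paper's proof shares with yours the feature that the $w_i$ may be taken among the $z_r$'s. One presentational nitpick: when you "fix $i,j$ and insert an intermediate index $u$," the inequality you derive after summing is between $\sum_{r,s}b_{ir}b_{is}d_X(z_r,z_s)^p$ and $2^p\sum_{r,u}b_{ir}b_{ju}d_X(z_r,z_u)^p$ only after you symmetrize the two resulting terms using $d_X(z_r,z_u)=d_X(z_u,z_r)$; this is immediate but worth a clause.
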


\begin{proof}
Let $f:\{z_1,\ldots,z_m\}\to \ell_\infty^{m}$ be an isometric embedding (e.g. one can take $f(z)=\sum_{r=1}^{m} d_X(z,z_r)e_r$, where $\{e_r\}_{r=1}^m$ is the standard basis of $\R^m$). Define $y_1,\ldots,y_n\in \ell_\infty^m$ by
\begin{equation}\label{eq:average choice}
\forall\, i\in \{1,\ldots,n\},\quad y_i\eqdef \sum_{r=1}^mb_{ir} f(z_r).
\end{equation}
Next, for every $i\in \{1,\ldots,n\}$ choose $w_i\in \{z_1,\ldots,z_m\}$ such that
\begin{equation}\label{eq:nearest point}
\|y_i-f(w_i)\|_\infty =\min_{z\in \{z_1,\ldots,z_m\}} \|y_i-f(z)\|_\infty.
\end{equation}

By the triangle inequality, for every $i,j\in \{1,\ldots,m\}$ we have
\begin{multline*}
d_X(w_i,w_j)^p=\|f(w_i)-f(w_j)\|_\infty^p\\\le 3^{p-1}\left(\|f(w_i)-y_i\|_\infty^p+\|y_i-y_j\|_\infty^p+\|y_j-f(w_j)\|_\infty^p\right).
\end{multline*}
Consequently, using the stochasticity of $C$ and its reversibility relative to $\pi$,
\begin{multline}\label{eq:two terms C}
\sum_{i=1}^n\sum_{j=1}^n \pi_i c_{ij} d_X(w_i,w_j)^p\\\le 3^{p-1}\sum_{i=1}^n\sum_{j=1}^n \pi_i c_{ij} \|y_i-y_j\|_\infty^p+2\cdot 3^{p-1} \sum_{i=1}^n \pi_i\|y_i-f(w_i)\|_\infty^p.
\end{multline}
Recalling~\eqref{eq:average choice}, the convexity of the function $v\mapsto \|v\|_\infty^p$ implies that
\begin{multline}\label{eq:get matrix product}
\sum_{i=1}^n\sum_{j=1}^n \pi_i c_{ij} \|y_i-y_j\|_\infty^p\le \sum_{i=1}^n\sum_{j=1}^n \pi_ic_{ij} \sum_{r=1}^m\sum_{s=1}^m b_{ir} b_{js}\|f(z_r)-f(z_s)\|_\infty^p\\
=\sum_{r=1}^m\sum_{s=1}^m (B^*D_\pi CB)_{rs} d_X(z_r,z_s)^p.
\end{multline}
Next, due to~\eqref{eq:nearest point} and the fact that $CB$ is a stochastic matrix,
$$
\forall\, i\in \{1,\ldots,n\},\quad \|y_i-f(w_i)\|_\infty^p\le \sum_{r=1}^m (CB)_{ir} \|y_i-f(z_r)\|_\infty^p.
$$
Recalling~\eqref{eq:average choice} and using the convexity of the function $v\mapsto \|v\|_\infty^p$ once more, we deduce that
\begin{multline}\label{eq:get matrix product again}
\sum_{i=1}^n \pi_i\|y_i-f(w_i)\|_\infty^p\le \sum_{i=1}^n\pi_i\sum_{r=1}^m (CB)_{ir}\sum_{s=1}^m b_{is}\|f(z_s)-f(z_r)\|_\infty^p\\
=\sum_{r=1}^m\sum_{s=1}^m (B^*D_\pi CB)_{rs}d_X(z_r,z_s)^p.
\end{multline}
A combination of~\eqref{eq:two terms C}, \eqref{eq:get matrix product} and~\eqref{eq:get matrix product again} now implies that
\begin{equation}\label{eq:C bound}
\sum_{i=1}^n\sum_{j=1}^n \pi_ic_{ij} d_X(w_i,w_j)^p\le 3^p\sum_{r=1}^m\sum_{s=1}^m (B^*D_\pi CB)_{rs} d_X(z_r,z_s)^p.
\end{equation}

Next, note that by the triangle inequality for every $i,j\in \{1,\ldots,n\}$ and $r\in \{1,\ldots,m\}$ we have
\begin{multline}\label{eq:put yj in}
d_X(w_i,z_r)^p=\|f(w_i)-f(z_r)\|_\infty^p\\
\le 3^{p-1}\left(\|f(w_i)-y_i\|_\infty^p+\|y_i-y_j\|_\infty^p+\|y_j-f(z_r)\|_\infty^p\right).
\end{multline}
By multiplying inequality~\eqref{eq:put yj in} by $\pi_i b_{ir}c_{ij}$, summing over $r\in \{1,\ldots,m\}$ and $i,j\in \{1,\ldots,n\}$, and using the stochasticity of $B$ and $C$, we deduce that
\begin{multline}\label{eq:three terms B}
\frac{1}{3^{p-1}}\sum_{i=1}^n\sum_{r=1}^m \pi_i b_{ir} d_X(w_i,z_r)^p
\le  \sum_{i=1}^n\pi_i\|f(w_i)-y_i\|_\infty^p\\+\sum_{i=1}^n\sum_{j=1}^n \pi_ic_{ij}\|y_i-y_j\|_\infty^p+\sum_{j=1}^n\sum_{r=1}^m (B^*D_\pi C)_{rj}\|y_j-f(z_r)\|_\infty^p.
\end{multline}
Recalling~\eqref{eq:average choice} and using the convexity of the function $v\mapsto \|v\|_\infty^p$, we have
\begin{multline}\label{eq:get matrix product yet again}
\sum_{j=1}^n\sum_{r=1}^m (B^*D_\pi C)_{rj}\|y_j-f(z_r)\|_\infty^p\\\le
\sum_{r=1}^m\sum_{s=1}^m (B^*D_\pi CB)_{rs}d_X(z_r,z_s)^p.
\end{multline}
A combination of~\eqref{eq:three terms B} with~\eqref{eq:get matrix product}, \eqref{eq:get matrix product again}, and~\eqref{eq:get matrix product yet again} yields the estimate
\begin{equation*}\label{eq:B bound}
\sum_{i=1}^n\sum_{r=1}^m \pi_ib_{ir} d_X(w_i,z_r)^p\le 3^p \sum_{r=1}^m\sum_{s=1}^m (B^*D_\pi CB)_{rs} d_X(z_r,z_s)^p,
\end{equation*}
which, due to~\eqref{eq:C bound}, yields the desired estimate~\eqref{eq:BC goal}.
\end{proof}

The following lemma is  a natural variant of~\cite[Lem.~1.1]{Ball}.

\begin{lemma}[dual extension criterion]\label{lem:ball 1.1}
Fix $p,\Lambda,\Gamma\in [1,\infty)$, an integer $n$, and $\e\in (0,1)$. Suppose
that $(X,d_X)$ and $(Y,d_Y)$ are metric spaces with $(Y,d_Y)$ being
$W_p$ barycentric with constant $\Gamma$. Fix  $Z\subseteq X$, a
Lipschitz function $f:Z\to Y$ and distinct $x_1,\ldots,x_n\in X$. Suppose
that for every symmetric $n$ by $n$ matrix $H=(h_{ij})$ with
nonnegative entries there exists $\Phi^H:\{x_1,\ldots,x_n\}\to Y$
with $\Phi^H|_{\{x_1,\ldots,x_n\}\cap Z}=f|_{\{x_1,\ldots,x_n\}\cap
Z}$ such that
    \begin{equation*}
\sum_{i=1}^n\sum_{j=1}^n h_{ij} d_Y\left(\Phi^H(x_i),\Phi^H(x_j)\right)^p\le \Lambda^p\|f\|_{\Lip}^p \sum_{i=1}^n\sum_{j=1}^n h_{ij} d_X(x_i,x_j)^p.
    \end{equation*}
Then there is $F:\{x_1,\ldots,x_n\}\to Y$ with $F|_{\{x_1,\ldots,x_n\}\cap Z}=f|_{\{x_1,\ldots,x_n\}\cap Z}$ and
$\left\|F\right\|_{\Lip}\le (1+\e)\Gamma \Lambda\|f\|_{\Lip}$.
\end{lemma}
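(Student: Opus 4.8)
The plan is to run the minimax (duality) argument behind \cite[Lem.~1.1]{Ball}, using the $W_p$ barycentric structure of $Y$ in place of the linear structure of a Banach space. One may assume $\|f\|_{\Lip}>0$, since otherwise $f$ is constant and extends by a constant. Let $\F$ be the (nonempty) set of all maps $\Phi\colon\{x_1,\ldots,x_n\}\to Y$ with $\Phi|_{\{x_1,\ldots,x_n\}\cap Z}=f|_{\{x_1,\ldots,x_n\}\cap Z}$, and for $\Phi\in\F$ and an ordered pair $(i,j)$ with $i\neq j$ put $\rho_{ij}(\Phi)\eqdef d_Y(\Phi(x_i),\Phi(x_j))^p\big/\big(\|f\|_{\Lip}^pd_X(x_i,x_j)^p\big)\in[0,\infty)$, which is legitimate because the $x_i$ are distinct. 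Since $\rho_{ij}=\rho_{ji}$ and $d_X(x_i,x_j)>0$, the hypothesis implies that for every symmetric probability vector $q=(q_{ij})$ supported on the pairs $i\neq j$ there is $\Phi\in\F$ with $\sum_{i\neq j}q_{ij}\rho_{ij}(\Phi)\le\Lambda^p$ (apply the hypothesis with $h_{ij}=q_{ij}/(\|f\|_{\Lip}^pd_X(x_i,x_j)^p)$ for $i\neq j$ and $h_{ii}=0$). The goal is to produce a single $F\in\F$ with $\max_{i\neq j}\rho_{ij}(F)\le\big((1+\e)\Gamma\Lambda\big)^p$, which is precisely the bound $\|F\|_{\Lip}\le(1+\e)\Gamma\Lambda\|f\|_{\Lip}$.

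First I would show that $\Lambda^p\mathbf{1}$ (the all-ones vector scaled by $\Lambda^p$) lies in the set $\mathcal{K}$, defined as the closure of the convex hull of $\big\{(\rho_{ij}(\Phi))_{i\neq j}:\Phi\in\F\big\}+\R^N_{\geq 0}$, where $N$ is the number of ordered pairs $i\neq j$. If this failed, then, as $\mathcal{K}$ is nonempty, closed and convex, strict separation would give a functional $q'$ with $\langle q',\Lambda^p\mathbf{1}\rangle<\inf_{v\in\mathcal{K}}\langle q',v\rangle$; since $\mathcal{K}$ contains a translate of $\R^N_{\geq 0}$ on which $\langle q',\cdot\rangle$ is bounded below, necessarily $q'\geq 0$, and $q'\neq 0$ because it separates. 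Symmetrizing $q'$ (which leaves $\sum q'_{ij}\rho_{ij}(\Phi)$ unchanged, as $\rho$ is symmetric) and applying the hypothesis to the associated matrix $H$ would yield $\Phi\in\F$ with $\sum q'_{ij}\rho_{ij}(\Phi)\le\Lambda^p\sum q'_{ij}=\langle q',\Lambda^p\mathbf{1}\rangle$, contradicting $(\rho_{ij}(\Phi))\in\mathcal{K}$. Hence $\Lambda^p\mathbf{1}\in\mathcal{K}$, so for every $\d>0$ there are $\Phi_1,\ldots,\Phi_m\in\F$ and $\alpha\in\Delta^{m-1}$ with $\sum_{k=1}^m\alpha_k\rho_{ij}(\Phi_k)\le\Lambda^p+\d$ for all $i\neq j$ (the $\R^N_{\geq 0}$ summand only lowers the left-hand side).

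Next I would realize this approximate convex combination as a genuine $Y$-valued map via the barycenter map $\B\colon\P_Y\to Y$ witnessing that $Y$ is $W_p$ barycentric with constant $\Gamma$: set $F(x_i)\eqdef\B\big(\sum_{k=1}^m\alpha_k\d_{\Phi_k(x_i)}\big)$. If $x_i\in Z$ then every $\Phi_k(x_i)=f(x_i)$, so the argument of $\B$ is $\d_{f(x_i)}$ and $F(x_i)=f(x_i)$; thus $F\in\F$. For $i\neq j$ the measure $\sum_k\alpha_k\d_{(\Phi_k(x_i),\Phi_k(x_j))}$ is a coupling of $\sum_k\alpha_k\d_{\Phi_k(x_i)}$ and $\sum_k\alpha_k\d_{\Phi_k(x_j)}$, so \eqref{eq:Gamma Wp Lip} gives $d_Y(F(x_i),F(x_j))^p\le\Gamma^p\sum_k\alpha_k d_Y(\Phi_k(x_i),\Phi_k(x_j))^p$, that is $\rho_{ij}(F)\le\Gamma^p\sum_k\alpha_k\rho_{ij}(\Phi_k)\le\Gamma^p(\Lambda^p+\d)$. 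Taking $\d\le\big((1+\e)^p-1\big)\Lambda^p$ (possible since $\e>0$ and $p\geq1$) would give $\rho_{ij}(F)\le\big((1+\e)\Gamma\Lambda\big)^p$ for all $i\neq j$, completing the argument.

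The step I expect to be the main obstacle is the separation argument with a non-convex target: one cannot apply a minimax theorem to the maps $\Phi$ themselves, because $\F$ carries no linear structure. The way around this is to pass to the finite-dimensional space of distortion vectors $(\rho_{ij}(\Phi))_{i\neq j}$, prove that $\Lambda^p\mathbf{1}$ lies in its closed convex upward hull, and only afterwards turn an approximate convex combination of such vectors back into an honest $Y$-valued map using the barycenter. That final conversion is exactly where the $W_p$ barycentric hypothesis is needed and where the factor $\Gamma$ is incurred, while the harmless factor $(1+\e)$ absorbs the closure in the definition of $\mathcal{K}$ (which is unavoidable since $Y$ is not assumed to be a compact, or even proper, metric space).
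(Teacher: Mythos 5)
Your proposal is correct and follows essentially the same route as the paper: a Hahn--Banach separation argument in the finite-dimensional space of (symmetric) $p$-th power distance matrices, showing the target matrix lies in the closed convex hull of the attainable ones plus the nonnegative orthant, followed by converting the resulting approximate convex combination of maps into a single map via the barycenter and the $W_p$-Lipschitz estimate applied to the obvious coupling. The only cosmetic difference is that you normalize by $\|f\|_{\Lip}^p d_X(x_i,x_j)^p$ and separate from $\Lambda^p\mathbf{1}$, whereas the paper separates the unnormalized matrix $T=(t_{ij})$ from $E=\overline{\conv(C+D)}$; both yield the same conclusion with the same loss of $(1+\e)\Gamma$.
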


\begin{proof}
We proceed via the following duality argument due to Ball~\cite{Ball} (which is itself inspired by the work of Maurey~\cite{Mau74}), with a slight twist that brings in the assumption that $Y$ is $W_p$ barycentric.

Consider the following set of $n$ by $n$ symmetric matrices.
\begin{multline*}
C\eqdef \Big\{\left(d_Y(\Phi(x_i),\Phi(x_j))^p\right)\in M_n(\R):\\ \Phi:\{x_1,\ldots,x_n\}\to Y\ \wedge\ \Phi|_{\{x_1,\ldots,x_n\}\cap Z}=f|_{\{x_1,\ldots,x_n\}\cap Z}\Big\}.
\end{multline*}
Let $D\subseteq M_n(\R)$ be the set of all $n$ by $n$ symmetric
matrices with nonnegative entries and define
$$
E\eqdef \overline{\conv\left(C+D\right)}.
$$
For every $i,j\in \{1,\ldots, n\}$ write
$t_{ij}\eqdef\Lambda^p\|f\|_{\Lip}^pd_X(x_i,x_j)^p$. The assumption
of Lemma~\ref{lem:ball 1.1} can be rephrased as
\begin{equation}\label{eq:for duality}
\sup_{H\in D}\inf_{M\in C}\sum_{i=1}^n\sum_{j=1}^n h_{ij}\left(m_{ij}-t_{ij}\right)\le 0.
\end{equation}
It follows that the matrix $T=(t_{ij})$ belongs to $E$, since otherwise by the separation theorem (Hahn-Banach) there would exist a symmetric matrix $H=(h_{ij})$  such that
\begin{equation}\label{eq;alpha lower}
\inf_{M\in E} \sum_{i=1}^n\sum_{j=1}^n h_{ij}m_{ij} >\sum_{i=1}^n\sum_{j=1}^n h_{ij}t_{ij}
\end{equation}
Since $E\supseteq C+D$ it follows from~\eqref{eq;alpha lower} that the entries of $H$ are nonnegative, i.e., $H\in D$.  Now~\eqref{eq;alpha lower} contradicts~\eqref{eq:for duality} since $E\supseteq C$.

Having shown that $T\in E$, we deduce that there exists  $m\in \N$ and $\lambda_1,\ldots,\lambda_m\in [0,1]$ with $\sum_{k=1}^m \lambda_k=1$, and in addition there are $\Phi^1,\ldots,\Phi^m:\{x_1,\ldots,x_n\}\to Y$ with $\Phi^k|_{\{x_1,\ldots,x_n\}\cap Z}=f|_{\{x_1,\ldots,x_n\}\cap Z}$ for all $k\in \{1,\ldots,m\}$, such that for every $i,j\in \{1,\ldots, m\}$,
\begin{equation}\label{eq: inE}
(1+\e)^p \Lambda^p\|f\|_{\Lip}^pd_X(x_i,x_j)^p- \sum_{k=1}^m \lambda_k d_Y\left(\Phi^k(x_i),\Phi^k(x_j)\right)^p\ge 0.
\end{equation}

For every $i\in \{1,\ldots,n\}$ consider the finitely supported probability measure $\mu_i$ on $Y$ given by
$$
\mu_i\eqdef \sum_{k=1}^m \lambda_k\delta_{\Phi^k(x_i)},
$$
and set
$$
F(x_i)\eqdef \B\left(\mu_i\right).
$$
If $x_i\in Z$ then $\mu_i=\delta_{f(x_i)}$ and therefore $F(x_i)=f(x_i)$. In other words, $F|_{\{x_1,\ldots,x_n\}\cap Z}=f|_{\{x_1,\ldots,x_n\}\cap Z}$. Also, for every $i,j\in \{1,\ldots,n\}$ we have
\begin{eqnarray*}
d_Y(F(x_i),F(x_j))&\stackrel{\eqref{eq:Gamma Wp Lip}}{\le}& \Gamma
W_p(\mu_i,\mu_j)\\&\le& \Gamma\left(\sum_{k=1}^m\lambda_k
d_Y(\Phi^k(x_i),\Phi^k(x_j))^p\right)^{1/p}\\&\stackrel{\eqref{eq:
inE}}{\le}& (1+\e)\Gamma \Lambda\|f\|_{\Lip} d_Y(x_i,x_j).
\end{eqnarray*}
Thus $\|F\|_{\Lip}\le (1+\e)\Gamma \Lambda\|f\|_{\Lip}$, as required.
\end{proof}

\begin{proof}[Proof of Theorem~\ref{thm:general keith}]
Fix $m,n\in \N$. Take $x_1,\ldots,x_n\in X\setminus Z$ and $z_1,\ldots,z_m\in Z$. If $H=(h_{ij})\in M_{n+m}(\R)$ is symmetric then write
\[ H =\begin{pmatrix} U(H) & W(H)^*\\ W(H)& V(H) \end{pmatrix}, \]
where $U(H)\in M_m(\R)$, $V(H)\in M_n(\R)$ and $W(H)\in M_{n\times m}(\R)$. With this notation, define
\begin{multline*}
R_H\eqdef \sum_{r=1}^m\sum_{s=1}^m U(H)_{rs} d_X(z_r,z_s)^p\\
+2\sum_{i=1}^n\sum_{r=1}^m W(H)_{ir}d_X(x_i,z_r)^p+\sum_{i=1}^n\sum_{j=1}^n V(H)_{ij}d_X(x_i,x_j)^p,
\end{multline*}
and  for every $y_1,\ldots,y_n\in Y$,
\begin{multline*}
L_H(y_1,\ldots,y_n)\eqdef \sum_{r=1}^m\sum_{s=1}^m U(H)_{rs} d_Y(f(z_r),f(z_s))^p\\
+2\sum_{i=1}^n\sum_{r=1}^m W(H)_{ir}d_Y(y_i,f(z_r))^p+\sum_{i=1}^n\sum_{j=1}^n V(H)_{ij}d_Y(y_i,y_j)^p.
\end{multline*}

Fix from now on $M\in (M_p(X),2M_p(X)]$ and $N\in (N_p(Y),2N_p(Y)]$. Due to Lemma~\ref{lem:ball 1.1} it suffices to show that for every symmetric matrix $H=(h_{ij})\in M_{n+m}(\R)$ with nonnegative entries and for every $\d\in (0,1)$ one can find $y_1,\ldots,y_n\in Y$ such that
\begin{equation*}\label{eq:goal L less than R}
L_H(y_1,\ldots,y_n)\le \Lambda( R_H+\d),
\end{equation*}
where
\begin{equation}\label{eq:defK}
\Lambda\eqdef \frac{18^p}{3}(N^p+1)M^p\|f\|_{\Lip}^p.
\end{equation}
Since the Lipschitz condition of $f$ on $\{z_1,\ldots,z_m\}$ implies that
$$
\sum_{r=1}^m\sum_{s=1}^m U(H)_{rs} d_Y(f(z_r),f(z_s))^p\le \|f\|_{\Lip}^p \sum_{r=1}^m\sum_{s=1}^m U(H)_{rs} d_X(z_r,z_s)^p,
$$
it suffices to establish the existence of $y_1,\ldots,y_n\in Y$ that satisfy the inequality
\begin{multline}\label{eq:goal without upper corner}
\frac{1}{\Lambda}\left(2\sum_{i=1}^n\sum_{r=1}^m W(H)_{ir}d_Y(y_i,f(z_r))^p+\sum_{i=1}^n\sum_{j=1}^n V(H)_{ij}d_Y(y_i,y_j)^p\right)\\
\le 2\sum_{i=1}^n\sum_{r=1}^m W(H)_{ir}d_X(x_i,z_r)^p+\sum_{i=1}^n\sum_{j=1}^n V(H)_{ij}d_X(x_i,x_j)^p+\d.
\end{multline}

Fix $t\in \N$ and $\e\in (0,1)$ that will be determined later.
 Note that the diagonal entries $\{V(H)_{ii}\}_{i=1}^n$ are irrelevant for the validity of~\eqref{eq:goal without upper corner}, so we may assume from now on that $V(H)_{ii}=0$ for all $i\in \{1,\ldots,n\}$.

Define $\pi \in \R^n$ by
\begin{equation}\label{eq:choose pi}
\forall\, i\in \{1,\ldots,n\},\quad \pi_i\eqdef \frac{m\e+\sum_{r=1}^mW(H)_{ir}}{mn\e+\sum_{j=1}^n\sum_{r=1}^m W(H)_{jr}}.
\end{equation}
Thus $\pi\in \Delta^{n-1}$. Next, define $B=(b_{ir})\in M_{n\times m}(\R)$ by setting for every $i\in \{1,\ldots,n\}$ and $r\in \{1,\ldots,m\}$,
\begin{equation}\label{eq:choose B}
b_{ir}\eqdef \frac{\e+W(H)_{ir}}{m\e+\sum_{s=1}^mW(H)_{is}}.
\end{equation}
Thus $B$ is a stochastic matrix.  Finally, define $A=(a_{ij})\in M_n(\R)$ by setting for $i\in \{1,\ldots,n\}$,
\begin{equation}\label{eq:choose diaginal A}
a_{ii}\eqdef 1-\frac{2^{p+1}}{(2^{p}+1)M^p t}\cdot\frac{\sum_{j=1}^n V(H)_{ij}}{m\e+\sum_{r=1}^m W(H)_{ir}},
\end{equation}
and for distinct $i,j\in \{1,\ldots,n\}$,
\begin{equation}\label{eq:choose A off diagonal}
a_{ij}\eqdef \frac{2^{p+1}}{(2^{p}+1)M^p t}\cdot \frac{V(H)_{ij}}{m\e+\sum_{r=1}^m W(H)_{ir}}.
\end{equation}

The role of $\e$ is only to ensure that the denominators that appear in~\eqref{eq:choose pi}, \eqref{eq:choose B}, \eqref{eq:choose diaginal A} and~\eqref{eq:choose A off diagonal} do not vanish. An inspection of the ensuing argument reveals that there is flexibility in the choice of the normalizing factors in~\eqref{eq:choose diaginal A} and~\eqref{eq:choose A off diagonal}; the choices above were made in order to simplify some expressions in what follows. Fixing $\e$, we will assume from now on that $t$ is sufficiently large so as to ensure that $a_{11},\ldots,a_{nn}$ are all nonnegative. Thus $A$ is a stochastic matrix. Note also that since $V(H)$ is symmetric, an inspection of~\eqref{eq:choose pi} and~\eqref{eq:choose A off diagonal} reveals that $A$ is reversible relative to $\pi$.

Set
\begin{equation}\label{eq:choose tau}
\tau\eqdef \left\lceil \frac{t}{2^p}\right\rceil.
\end{equation}
By Lemma~\ref{lem:CN} applied with $C=\A_{\tau}(A)$ there exist $w_1,\ldots,w_n\in Y$ such that
\begin{multline}\label{eq:use CN}
\max\left\{\sum_{i=1}^n\sum_{r=1}^m \pi_ib_{ir}d_Y(w_i,f(z_r))^p,\sum_{i=1}^n\sum_{j=1}^n \pi_i \A_{\tau}(A)_{ij}d_Y(w_i,w_j)^p\right\}\\
\le 3^p\sum_{r=1}^m\sum_{s=1}^m \left(B^*D_\pi \A_{\tau}(A)B\right)_{rs} d_Y\left(f(z_r),f(z_s)\right)^p.
\end{multline}
Since $N>N_p(Y)$ there exist $y_1,\ldots,y_n\in Y$ such that
\begin{multline}\label{eq:N appears}
\sum_{i=1}^n \pi_i d_Y(w_i,y_i)^p+\frac{t}{2^p}\sum_{i=1}^n\sum_{j=1}^n \pi_i  a_{ij} d_Y(y_i,y_j)^p\\
\le N^p\sum_{i=1}^n\sum_{j=1}^n \pi_i\A_\tau(A)_{ij} d_Y(w_i,w_j)^p.
\end{multline}
We will now show that the points $y_1,\ldots, y_n$ thus found satisfy the desired inequality~\eqref{eq:goal without upper corner}.

To estimate the left hand side of~\eqref{eq:goal without upper corner} from above, denote
\begin{equation}\label{eq:def normalization theta}
\theta\eqdef \frac{(2^{p}+1)M^p(t+1)}{2^{p+1}}\left(mn\e+\sum_{i=1}^n\sum_{r=1}^m W(H)_{ir}\right),
\end{equation}and observe that due to~\eqref{eq:choose pi}, \eqref{eq:choose B} and~\eqref{eq:def normalization theta} we have
\begin{multline}\label{eq:W identity}
\forall(i,r)\in \{1,\ldots,n\}\times \{1,\ldots, m\}, \\ \frac{(2^{p}+1)M^p}{2^{p+1}}\left(W(H)_{ir}+\e\right)= \frac{\theta}{t+1}\pi_ib_{ir}.
\end{multline}
Similarly, due to~\eqref{eq:choose pi}, \eqref{eq:choose diaginal A} and \eqref{eq:choose A off diagonal}  we have
\begin{equation}\label{eq:V identity}
 \left(i,j\in \{1,\ldots, n\}\ \wedge\  i\neq j\right)\implies  V(H)_{ij}= \frac{\theta  t}{t+1}\pi_i a_{ij}.
\end{equation}
Hence,
\begin{multline}\label{eq:plug upper estimate matrices}
\frac{t+1}{\theta}\left(2\sum_{i=1}^n\sum_{r=1}^m W(H)_{ir}d_Y(y_i,f(z_r))^p+\sum_{i=1}^n\sum_{j=1}^n V(H)_{ij}d_Y(y_i,y_j)^p\right)\\
\le 2\sum_{i=1}^n\sum_{r=1}^m \pi_ib_{ir}d_Y(y_i,f(z_r))^p+t\sum_{i=1}^n\sum_{j=1}^n\pi_i a_{ij}d_Y(y_i,y_j)^p.
\end{multline}

By the triangle inequality, for every $i\in \{1,\ldots,n\}$ and $r\in \{1,\ldots,m\}$ we have
$$
d_Y(y_i,f(z_r))^p\le 2^{p-1} d_Y(y_i,w_i)^p+2^{p-1} d_Y(w_i,f(z_r))^p.
$$
Consequently,
\begin{multline}\label{eq;p-1}
\sum_{i=1}^n\sum_{r=1}^m \pi_ib_{ir}d_Y(y_i,f(z_r))^p\\
\le 2^{p-1}\sum_{i=1}^n \pi_i d_Y(y_i,w_i)^p+2^{p-1} \sum_{i=1}^n\sum_{r=1}^m \pi_ib_{ir}d_Y(w_i,f(z_r))^p.
\end{multline}
We can therefore bound the right hand side of~\eqref{eq:plug upper estimate matrices} as follows.
\begin{align}\label{eq:set up for lip condition}
&\nonumber2\sum_{i=1}^n\sum_{r=1}^m \pi_ib_{ir}d_Y(y_i,f(z_r))^p+t\sum_{i=1}^n\sum_{j=1}^n\pi_i a_{ij}d_Y(y_i,y_j)^p\\
&\nonumber\stackrel{\eqref{eq;p-1}}{\le}2^p \sum_{i=1}^n \pi_i d_Y(y_i,w_i)^p+t\sum_{i=1}^n\sum_{j=1}^n\pi_i a_{ij}d_Y(y_i,y_j)^p
\\&\nonumber\quad +2^p \sum_{i=1}^n\sum_{r=1}^m \pi_ib_{ir}d_Y(w_i,f(z_r))^p\\
&\nonumber \stackrel{\eqref{eq:N appears}}{\le} (2N)^p\sum_{i=1}^n\sum_{j=1}^n \pi_i\A_\tau(A)_{ij} d_Y(w_i,w_j)^p+2^p \sum_{i=1}^n\sum_{r=1}^m \pi_ib_{ir}d_Y(w_i,f(z_r))^p\\
& \nonumber \stackrel{\eqref{eq:use CN}}{\le} 6^p\left(N^p+1\right)\sum_{r=1}^m\sum_{s=1}^m \left(B^*D_\pi \A_{\tau}(A)B\right)_{rs} d_Y(f(z_r),f(z_s))^p\\
&\le 6^p\left(N^p+1\right)\|f\|_{\Lip}^p \sum_{r=1}^m\sum_{s=1}^m \left(B^*D_\pi \A_{\tau}(A)B\right)_{rs}d_X(z_r,z_s)^p.
\end{align}

For every $i,j\in \{1,\ldots,n\}$ and $r,s\in \{1,\ldots,m\}$ we have
\begin{equation*}\label{eq:final triangle in proof}
d_X(z_r,z_s)^p\le 3^{p-1} d_X(z_r,x_i)^p+3^{p-1} d_X(x_i,x_j)^p+3^{p-1}d_X(x_j,z_s)^p.
\end{equation*}
Consequently,
\begin{align}\label{eq:S13}
&\nonumber\sum_{r=1}^m\sum_{s=1}^m \left(B^*D_\pi \A_{\tau}(A)B\right)_{rs}d_X(z_r,z_s)^p\\\nonumber&=\sum_{r=1}^m\sum_{s=1}^m \sum_{i=1}^n\sum_{j=1}^n b_{ir}b_{js}\pi_i\A_\tau(A)_{ij}d_X(z_r,z_s)^p\\&\le 3^{p-1}(S_1+S_2+S_3),
\end{align}
where, using the stochasticity of $A$ and $B$,
\begin{eqnarray}\label{eq:def S1}
S_1&\eqdef&\nonumber \sum_{r=1}^m\sum_{s=1}^m \sum_{i=1}^n\sum_{j=1}^n b_{ir}b_{js}\pi_i\A_\tau(A)_{ij} d_X(z_r,x_i)^p\\&=& \sum_{i=1}^n\sum_{r=1}^m \pi_ib_{ir} d_X(x_i,z_r)^p,
\end{eqnarray}
using the stochasticity of $A$ and $B$ and the fact that $X$ has Markov type $p$ with $M>M_p(X)$,
\begin{eqnarray}\label{eq:def S2}
S_2\nonumber&\eqdef& \sum_{r=1}^m\sum_{s=1}^m \sum_{i=1}^n\sum_{j=1}^n b_{ir}b_{js}\pi_i\A_\tau(A)_{ij} d_X(x_i,x_j)^p\\\nonumber&=&\frac{1}{\tau}\sum_{k=0}^{\tau-1} \sum_{i=1}^n\sum_{j=1}^n\pi_i(A^k)_{ij} d_X(x_i,x_j)^p
\\&\stackrel{\eqref{eq:to reverse type}}{\le}& \nonumber
\frac{M^p}{\tau}\sum_{k=1}^{\tau} k\sum_{i=1}^n\sum_{j=1}^n \pi_i a_{ij} d_X(x_i,x_j)^p\\
\nonumber&=& \frac{M^p(\tau+1)}{2}\sum_{i=1}^n\sum_{j=1}^n \pi_i a_{ij} d_X(x_i,x_j)^p\\
&\stackrel{\eqref{eq:choose tau}}{\le}& \frac{(2^{p}+1)M^pt}{2^{p+1}}\sum_{i=1}^n\sum_{j=1}^n \pi_i a_{ij} d_X(x_i,x_j)^p,
\end{eqnarray}
and, using the stochasticity of $A$ and $B$, and the reversibility of $\A_\tau(A)$ relative to $\pi$,
\begin{eqnarray}\label{eq:def S3}
S_3&\eqdef&\nonumber \sum_{r=1}^m\sum_{s=1}^m \sum_{i=1}^n\sum_{j=1}^n b_{ir}b_{js}\pi_i\A_\tau(A)_{ij} d_X(x_j,z_s)^p\\
&=&\nonumber\sum_{i=1}^n\sum_{j=1}^n\sum_{s=1}^m b_{js}\pi_j\A_\tau(A)_{ji}d_X(x_j,z_s)^p\\&=& \sum_{j=1}^n\sum_{s=1}^m \pi_j b_{js}d_X(x_j,z_s)^p=S_1.
\end{eqnarray}

By substituting~\eqref{eq:def S1}, \eqref{eq:def S2} and~\eqref{eq:def S3} into~\eqref{eq:S13}, and combining the resulting estimate with~\eqref{eq:set up for lip condition} and~\eqref{eq:plug upper estimate matrices} while recalling the definition of $\Lambda$ in~\eqref{eq:defK}, we arrive at
\begin{align*}
&\frac{1}{\Lambda}\left(2\sum_{i=1}^n\sum_{r=1}^m W(H)_{ir}d_Y(y_i,f(z_r))^p+\sum_{i=1}^n\sum_{j=1}^n V(H)_{ij}d_Y(y_i,y_j)^p\right)\\
&\le  2\sum_{i=1}^n\sum_{r=1}^m \frac{2^{p+1}\theta\pi_ib_{ir}}{(2^p+1)M^p(t+1)} d_X(x_i,z_r)^p+ \sum_{i=1}^n\sum_{j=1}^n \frac{t\theta\pi_i a_{ij}}{t+1} d_X(x_i,x_j)^p\\
&= 2\sum_{i=1}^n\sum_{r=1}^m \left(W(H)_{ir}+\e\right) d_X(x_i,z_r)^p+ \sum_{i=1}^n\sum_{j=1}^n V(H)_{ij}d_X(x_i,x_j)^p,
\end{align*}
where we used the identities~\eqref{eq:W identity} and~\eqref{eq:V identity}. Choosing $\e\in (0,1)$ so that $2\e\sum_{i=1}^n\sum_{r=1}^m d_X(x_i,z_r)^p\le \d$ yields the desired estimate~\eqref{eq:goal without upper corner}.
\end{proof}

\section{Proof of Theorem~\ref{thm:no cotype} and Theorem~\ref{thm:aspect}}\label{sec:kalton}

Both Theorem~\ref{thm:no cotype} and Theorem~\ref{thm:aspect} rely on a quantitative variant of a beautiful construction of Kalton~\cite{Kal04,Kal12}. Before passing to the construction itself, we record some basic facts on metric Markov cotype.

\begin{lemma}\label{lem:retraction cotype}
Fix $p\in (0,\infty)$ and let $(X,d_X)$ be a metric space with metric Markov cotype $p$. Suppose that $S\subseteq X$ is a Lipschitz retract of $X$, i.e., there exists a Lipschitz mapping $\rho:X\to S$ such that $\rho(s)=s$ for every $s\in S$. Then $S$ also has metric Markov cotype $p$, and in fact
$$
N_p(S)\le \|\rho\|_{\Lip}N_p(X).
$$
\end{lemma}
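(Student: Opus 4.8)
The plan is to unwind Definition~\ref{def:Mcotype} directly, using the retraction to transport witnesses from $X$ back into $S$. Fix $n,t\in\N$, a probability vector $\pi\in\Delta^{n-1}$, a stochastic matrix $A=(a_{ij})\in M_n(\R)$ reversible relative to $\pi$, and points $x_1,\dots,x_n\in S$. Since $S\subseteq X$ we may regard the $x_i$ as points of $X$, which has metric Markov cotype $p$; hence there exist $y_1',\dots,y_n'\in X$ satisfying~\eqref{def:markov cotype} with constant $N_p(X)$, i.e.
\[
\sum_{i=1}^n \pi_i d_X(x_i,y_i')^p+t\sum_{i=1}^n\sum_{j=1}^n \pi_ia_{ij}d_X(y_i',y_j')^p\le N_p(X)^p\sum_{i=1}^n\sum_{j=1}^n \pi_i\A_t(A)_{ij}d_X(x_i,x_j)^p.
\]
The candidate witnesses for $S$ are then $y_i\eqdef\rho(y_i')\in S$.

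The verification is a short computation. Because $\rho$ is $\|\rho\|_{\Lip}$-Lipschitz, $d_X(y_i,y_j)=d_X(\rho(y_i'),\rho(y_j'))\le\|\rho\|_{\Lip}d_X(y_i',y_j')$ for all $i,j$. Moreover, since each $x_i\in S$ we have $\rho(x_i)=x_i$, so $d_X(x_i,y_i)=d_X(\rho(x_i),\rho(y_i'))\le\|\rho\|_{\Lip}d_X(x_i,y_i')$. Substituting these into the left-hand side of~\eqref{def:markov cotype} formed with the points $y_1,\dots,y_n$ pulls out a common factor $\|\rho\|_{\Lip}^p$, while the right-hand side of~\eqref{def:markov cotype} involves only the pairwise distances among $x_1,\dots,x_n$, which are identical whether measured in $S$ or in $X$. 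Combining with the displayed inequality for $X$ gives
\[
\sum_{i=1}^n \pi_i d_X(x_i,y_i)^p+t\sum_{i=1}^n\sum_{j=1}^n \pi_ia_{ij}d_X(y_i,y_j)^p\le \big(\|\rho\|_{\Lip}N_p(X)\big)^p\sum_{i=1}^n\sum_{j=1}^n \pi_i\A_t(A)_{ij}d_X(x_i,x_j)^p,
\]
which is exactly~\eqref{def:markov cotype} for $(S,d_X|_S)$ with constant $\|\rho\|_{\Lip}N_p(X)$. Since $n,t,\pi,A$ and the $x_i$ were arbitrary, this yields $N_p(S)\le\|\rho\|_{\Lip}N_p(X)$.

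There is no real obstacle here; the argument is routine. The one point worth emphasizing is that metric Markov cotype carries an existential quantifier over the target points, and the proof works precisely because $\rho$ fixes $S$ pointwise — this is what controls the first summand of~\eqref{def:markov cotype} after post-composing the $X$-witnesses with $\rho$. Note also that no triangle inequality or convexity estimate involving $p$-th powers is invoked, so the argument is valid for every $p\in(0,\infty)$.
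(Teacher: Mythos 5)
Your proof is correct and is essentially identical to the paper's: take the $X$-valued witnesses, push them into $S$ via $\rho$, and use that $\rho$ fixes each $x_i$ together with the Lipschitz bound. The only cosmetic difference is that the paper works with an arbitrary $N>N_p(X)$ and passes to the infimum at the end, whereas you invoke the constant $N_p(X)$ directly (which strictly speaking requires that approximation step, since the infimum in Definition~\ref{def:Mcotype} need not be attained).
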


\begin{proof}
Fix $N>N_p(X)$ and $n,t\in \N$. Suppose that $A=(a_{ij})\in M_n(\R)$
is a stochastic matrix that is reversible relative to $\pi\in
\Delta^{n-1}$. For every $x_1,\ldots,x_n\in S$ there exist
$y_1,\ldots,y_n\in X$ such that
\begin{multline*}
\sum_{i=1}^n \pi_id_X(x_i,y_i)^p+t\sum_{i=1}^n\sum_{j=1}^n \pi_ia_{ij}
d_X(y_i,y_j)^p\\\le N^p\sum_{i=1}^n\sum_{j=1}^n
\pi_i\A_t(A)_{ij}d_X(x_i,x_j)^p.
\end{multline*}
Then, since $\rho(x_i)=x_i$ for every $i\in \{1,\ldots,n\}$,
\begin{align*}
&\sum_{i=1}^n \pi_id_X(x_i,\rho(y_i))^p+t\sum_{i=1}^n\sum_{j=1}^n \pi_ia_{ij}
d_X(\rho(y_i),\rho(y_j))^p\\
&\le\|\rho\|_{\Lip}^p\left (\sum_{i=1}^n \pi_id_X(x_i,y_i)^p+t\sum_{i=1}^n\sum_{j=1}^n \pi_ia_{ij}
d_X(y_i,y_j)^p \right )\\
& \le \|\rho\|^p_{\Lip}N^p\sum_{i=1}^n\sum_{j=1}^n
\pi_i\A_t(A)_{ij}d_X(x_i,x_j)^p.\qedhere
\end{align*}
\end{proof}

We next show that the real line $\R$ (equipped with the standard
metric) fails to have metric Markov cotype $p$ for any $p\in (0,2)$.
Lemma~\ref{lem:no cotype R} below contains a simple explicit example
that exhibits this fact, but when $p\in [1,2)$ there is a also a
roundabout way to see that $\R$ fails to have metric Markov cotype
$p$ via the link to Lipschitz extension. Indeed, it follows from the
definition of metric Markov cotype that $N_p(\ell_p)=N_p(\R)$. Ball
proved~\cite{Ball} that the Markov type $p$ constant of $\ell_p$
satisfies $M_p(\ell_p)=1$ for $p\in [1,2)$. Corollary~\ref{coro:bona
fide lip ext} therefore implies that $e(\ell_p,\ell_p)\lesssim
N_p(\R)$. But it is known that $e(\ell_p,\ell_p)=\infty$ for every
$p\in [1,2)$: for $p\in (1,2)$ see~\cite{Nao01}, and for $p=1$ it is
observed in~\cite{MM10} that this follows from~\cite{Bou81}
or~\cite{FJS88} (\cite{MM10} also provides an interesting third
proof of the fact that $e(\ell_1,\ell_1)=\infty$).

\begin{lemma}\label{lem:no cotype R}
For every $p\in (0,2)$ the real line $\R$ (equipped with the standard metric) fails to have metric Markov cotype $p$.
\end{lemma}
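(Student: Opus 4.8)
The plan is to exhibit, for every fixed $N\in[1,\infty)$, a single instance of Definition~\ref{def:Mcotype} that no choice of $y_1,\dots,y_n\in\R$ can satisfy, thereby forcing $N_p(\R)=\infty$. Fix a large integer $n$ and take $t=n$, $\pi=(1/n,\dots,1/n)$, and let $A=(a_{ij})$ be the transition matrix of the simple random walk on the cycle $\Z_n=\Z/n\Z$ (so $a_{i,i\pm1}=\tfrac12$ with indices mod $n$, and $a_{ij}=0$ otherwise); this $A$ is stochastic and reversible relative to the uniform $\pi$. As the initial configuration take the ``tent'' $x_j\eqdef\min(j,n-j)=d_{\Z_n}(j,0)$, $j\in\Z_n$. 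Writing $F(y)\eqdef\sum_{j\in\Z_n}|x_j-y_j|^p$ and $E(y)\eqdef\sum_{j\in\Z_n}|y_{j+1}-y_j|^p$, one checks that with these choices the left-hand side of~\eqref{def:markov cotype} is exactly $\frac1nF(y)+E(y)$ and the right-hand side is $N^p\Sigma$, where $\Sigma\eqdef\sum_{i,j}\pi_i\A_n(A)_{ij}|x_i-x_j|^p$. Hence it suffices to show that $\big(\min_y(\tfrac1nF(y)+E(y))\big)/\Sigma$ tends to $\infty$ with $n$.

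First I would bound $\Sigma$ from above by a ``diffusive'' estimate. Since $x=d_{\Z_n}(\cdot,0)$ is $1$-Lipschitz, if $X_0,X_s$ denote the walk after $0$ and $s$ steps then $|x_{X_s}-x_{X_0}|\le d_{\Z_n}(X_0,X_s)\le\big|\sum_{k=1}^s\xi_k\big|$ with $\xi_k$ i.i.d.\ uniform $\pm1$; as $u\mapsto u^{p/2}$ is concave ($p<2$), $\sum_{i,j}\pi_i(A^s)_{ij}|x_i-x_j|^p\le\E\big|\sum_{k=1}^s\xi_k\big|^p\le s^{p/2}$, and averaging over $s\le n$ gives $\Sigma\le n^{p/2}$. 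This part is routine.

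The substantive step is to show $\min_y\big(\tfrac1nF(y)+E(y)\big)\gtrsim n^{\min(1,p)}$, with an implied constant depending only on $p$; this says $\R$ cannot ``absorb'' the tent. I would argue by a dichotomy on the oscillation $\Delta_y\eqdef\max_{0\le j\le n/2}y_j-\min_{0\le j\le n/2}y_j$ of $y$ over the up-ramp $R=\{0,1,\dots,\lfloor n/2\rfloor\}$, where $x_j=j$. If $\Delta_y\le n/8$, then $y|_R$ lies in an interval of length $\le n/8$ while $x|_R$ fills $\{0,\dots,\lfloor n/2\rfloor\}$, so at least $\gtrsim n$ of the indices $j\in R$ satisfy $|x_j-y_j|\gtrsim n$; thus $F(y)\gtrsim n^{p+1}$ and $\tfrac1nF(y)\gtrsim n^{p}\ge n^{\min(1,p)}$. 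If instead $\Delta_y>n/8$, the total variation of $y$ along $R$ is at least $\Delta_y>n/8$, so $\sum_{0\le j<n/2}|y_{j+1}-y_j|>n/8$, and then the power-mean inequality (when $p\ge1$) or the subadditivity of $u\mapsto u^p$ (when $p<1$) gives $E(y)\ge\sum_{0\le j<n/2}|y_{j+1}-y_j|^p\gtrsim n^{\min(1,p)}$. Either way the claim holds. Combined with $\Sigma\le n^{p/2}$, the cotype inequality with constant $N$ would force $N^p\gtrsim n^{\min(1,p)-p/2}$; since $\min(1,p)-p/2>0$ for every $p\in(0,2)$, letting $n\to\infty$ contradicts the fact that $N$ was fixed.

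I expect the only delicate point to be the lower bound on $F(y)+nE(y)$: the dichotomy ``$y$ is far from $x$ on $R$, or $y$ has large total variation on $R$'' must be made quantitative with the right constants, and in the regime $p<1$ one must use subadditivity of $u\mapsto u^p$ rather than convexity; everything else (the diffusive bound, the identification of the two sides of~\eqref{def:markov cotype}, reversibility of $A$) is elementary bookkeeping. Incidentally, for $p\in[1,2)$ the conclusion also follows from $N_p(\R)=N_p(\ell_p)$ together with $e(\ell_p,\ell_p)=\infty$ and Corollary~\ref{coro:bona fide lip ext}, as noted above, but the cycle example treats all $p\in(0,2)$ at once.
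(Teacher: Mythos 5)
Your proof is correct. It follows the same underlying strategy as the paper's proof --- a one-dimensional random walk together with a diffusive upper bound on the right-hand side of~\eqref{def:markov cotype} and a ``rigidity'' lower bound on the left-hand side --- but the concrete instance and the organization of the case analysis differ. The paper runs the lazy reflecting walk on the path $\{1,\dots,n\}$ with $x_i=i$, chooses $t$ as an explicit function of $N_p(\R)$ (differently for $p\in[1,2)$ and $p\in(0,1]$) so large that the gradient term forces \emph{all} $y_i$ to lie within $n/4$ of $y_1$, and then concludes that the term $\frac1n\sum_i|i-y_i|^p$ is $\gtrsim n^p$; its diffusive bound is obtained by an inductive second-moment computation for the reflecting walk. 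You instead use the cycle $\Z_n$ with the tent function $x_j=d_{\Z_n}(j,0)$, which lets you fix $t=n$ once and for all and get the diffusive bound $\Sigma\le n^{p/2}$ by simply lifting to $\Z$ and applying Jensen; the price is that you must keep both branches of the dichotomy (small oscillation forces $F(y)\gtrsim n^{p+1}$, large oscillation forces $E(y)\gtrsim n^{\min(1,p)}$ via power-mean for $p\ge1$ or subadditivity for $p<1$), whereas the paper's choice of $t$ kills the second branch outright. Both yield a quantitative lower bound on $N_p(\R)$ growing with $n$ (your exponent $\min(1,p)-p/2$ versus the paper's $n\lesssim N_p(\R)^{2/(2-p)}$ up to the case split), and both correctly note the alternative route via $e(\ell_p,\ell_p)=\infty$ for $p\in[1,2)$. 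The only points to make fully rigorous in a write-up are the constants in the small-oscillation case (if $y|_R$ lies in an interval of length $n/8$ then at most $3n/8+O(1)$ of the integers $x_j=j$, $j\in R$, are within $n/8$ of that interval, leaving $\gtrsim n$ indices with $|x_j-y_j|>n/8$), and these you have flagged yourself; they go through without difficulty.
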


\begin{proof} Fix $p\in (0,2)$ and suppose for the sake of obtaining a contradiction that $N_p(\R)<\infty$. Fixing $n\in\N$, define $A=(a_{ij})\in M_n(\R)$ by $a_{11}=a_{nn}=1/2$ and $a_{i,i+1}=a_{i+1,i}=1/2$ for every $i\in \{1,\ldots,n-1\}$, and the remaining entries of $A$ vanish. Thus $A$ is a symmetric stochastic matrix, corresponding to the standard random walk on $\{1,\ldots,n\}$ in which if the walker is at either $1$ or $n$ then with probability $1/2$  it does nothing in the next step, and with probability $1/2$ it moves in the next step to its unique neighbor in $\{2,n-1\}$. Let $\{W_0,W_1,W_2,\ldots\}$ denote this walk, i.e., $W_0$ is uniformly distributed on $\{1,\ldots,n\}$ and conditioned on $W_t=i$ the probability that $W_{t+1}=j$ equals $a_{ij}$. Thus, for every $t\in \N$ we have
\begin{multline}\label{eq:drunk}
\left(\frac{1}{n}\sum_{i=1}^n\sum_{j=1}^n (A^t)_{ij}|i-j|^p\right)^{1/p}=\left(\E\left[|W_t-W_0|^p\right]\right)^{1/p}\\\le \left(\E\left[|W_t-W_0|^2\right]\right)^{1/2}\le \sqrt{t}.
\end{multline}
To justify the final inequality in~\eqref{eq:drunk}, proceed by induction on $t$ as follows. Since $|W_{t+1}-W_t|\le 1$ point-wise,
$$
\E\left[|W_{t+1}-W_0|^2\right]\le \E\left[|W_t-W_0|^2\right]+2\E\left[(W_t-W_0)(W_{t+1}-W_t)\right]+1,
$$
so for the induction step  it suffices to show that for every $t\in \N$ we have
$\E\left[(W_t-W_0)(W_{t+1}-W_t)\right]\le 0$.  By conditioning on $W_0,W_t$, it suffices to check the point-wise inequality
\begin{equation}\label{eq:conditioned}
(W_t-W_0)\E\left[W_{t+1}-W_t\big|W_0,W_t\right]\le 0.
\end{equation}
\eqref{eq:conditioned} is easy to verify: if $W_t\in \{2,\ldots,n-1\}$ then $W_{t+1}-W_t$ is uniformly distributed on $\{-1,1\}$ and therefore $\E\left[W_{t+1}-W_t\big|W_0,W_t\right]=0$, if $W_t=1$ then $W_t-W_0\le 0$ and $W_{t+1}-W_t\ge 0$, and if  $W_t=n$ then $W_t-W_0\ge 0$ and $W_{t+1}-W_t\le 0$.

Due to~\eqref{eq:drunk}, for every $t\in \N$ we have
$$
\frac{1}{n}\sum_{i=1}^n\sum_{j=1}^n \A_t(A)_{ij}|i-j|^p\le \frac{1}{t}\sum_{s=1}^t s^{p/2}\le t^{p/2}.
$$
The definition of metric Markov cotype $p$ therefore implies that there exist $y_1,\ldots,y_n\in \R$ such that
\begin{equation}\label{eq:contra assumption cotype R}
\frac{1}{n}\sum_{i=1}^n |i-y_i|^p+\frac{t}{n}\sum_{i=1}^{n-1}|y_{i+1}-y_i|^p\le N_p(\R)^p t^{p/2}.
\end{equation}

Suppose first that $p\in [1,2)$. In this case we choose
\begin{equation}\label{eq:first choice t}
t=\left\lceil\left(4N_p(\R)\right)^{\frac{2p}{2-p}}\right\rceil.
\end{equation}
Using H\"older's inequality and~\eqref{eq:contra assumption cotype R} we deduce that
$$
\sum_{i=1}^{n-1}|y_{i+1}-y_i|\le (n-1)^{1-\frac{1}{p}}\left(\sum_{i=1}^{n-1}|y_{i+1}-y_i|^{p}\right)^{1/p}\le
\frac{N_p(\R)n}{t^{\frac{1}{p}-\frac{1}{2}}}.
$$
Consequently, for every $i\in \{2,\ldots,n\}$ we have
\begin{equation}\label{eq:yi close to y1}
|y_i-y_1|\le \sum_{j=1}^{i-1}|y_{j+1}-y_j|\le \frac{N_p(\R)n}{t^{\frac{1}{p}-\frac{1}{2}}}\stackrel{\eqref{eq:first choice t}}{\le} \frac{n}{4}.
\end{equation}
\eqref{eq:yi close to y1} implies that if $y_1\le n/2$ then $y_i\le 3n/4$ for every $i\in \{1,\ldots,n\}$ and if $y_1\ge n/2$ then $y_i\ge n/4$ for every $i\in \{1,\ldots,n\}$. Hence,
\begin{equation}\label{eq:first term in cotype contra big}
\frac{1}{n}\sum_{i=1}^n |i-y_i|^p\ge \frac{1}{n}\sum_{i=1}^{\lfloor n/8\rfloor } |i-y_i|^p +\frac{1}{n}\sum_{i=\lceil 7n/8\rceil}^n |i-y_i|^p\gtrsim n^{p}.
\end{equation}
By substituting~\eqref{eq:first term in cotype contra big} into~\eqref{eq:contra assumption cotype R} and recalling~\eqref{eq:first choice t} we conclude that
$$
n\le 4^{\frac{p}{2-p}} N_p(\R)^{\frac{2}{2-p}},
$$
which is a contradiction for large enough $n$.

It remains to deal with the case $p\in (0,1]$. Now our choice of $t$ is
\begin{equation}\label{eq:second choice t}
t=\left\lceil\left(4N_p(\R)\right)^{\frac{2p}{2-p}}n^{\frac{2(1-p)}{2-p}}\right\rceil.
\end{equation}
Observe that since $p\in (0,1]$, for every $i\in \{2,\ldots,n\}$,
\begin{multline*}
|y_i-y_1|\le \sum_{j=1}^{i-1}|y_{j+1}-y_j|\\\le \left(\sum_{j=1}^{i-1}|y_{j+1}-y_j|^p\right)^{1/p}\stackrel{\eqref{eq:contra assumption cotype R} }{\le} \frac{N_p(\R)n^{\frac{1}{p}}}{t^{\frac{1}{p}-\frac{1}{2}}}\stackrel{\eqref{eq:second choice t}}{\le}\frac{n}{4}.
\end{multline*}
We thus arrived at the same conclusion as~\eqref{eq:yi close to y1}, and therefore~\eqref{eq:first term in cotype contra big} holds true. In combination with~\eqref{eq:contra assumption cotype R} and our current choice of $t$ in~\eqref{eq:second choice t}, we see that
$$
n\lesssim N_p(\R)^{\frac{2}{2-p}}n^{\frac{1-p}{2-p}},
$$
which simplifies to $n\lesssim N_p(\R)^2$, a contradiction for large enough $n$.
\end{proof}

\begin{corollary}\label{coro:no cotype banach}
Every Banach space $(X,\|\cdot\|_X)$ fails to have metric Markov cotype $p$ for all $p\in (0,2)$.
\end{corollary}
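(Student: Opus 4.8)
The plan is to deduce this from Lemma~\ref{lem:no cotype R} together with Lemma~\ref{lem:retraction cotype}, using the elementary fact that every one-dimensional subspace of a Banach space is the range of a norm-one linear projection.

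First I would dispose of the trivial case $X=\{0\}$ and then fix a unit vector $v\in X$. The line $\R v$, equipped with the metric inherited from $X$, is isometric to $\R$ with its usual metric (up to a scaling of the metric by $\|v\|_X=1$, which in any case does not affect metric Markov cotype), so $N_p(\R v)=N_p(\R)$. Next I would invoke the Hahn--Banach theorem to produce $\phi\in X^*$ with $\|\phi\|_{X^*}=1$ and $\phi(v)=1$; the map $\rho:X\to \R v$ given by $\rho(x)\eqdef\phi(x)v$ is then a linear — hence Lipschitz — projection of $X$ onto $\R v$ satisfying $\|\rho(x)\|_X=|\phi(x)|\le\|x\|_X$, so $\|\rho\|_{\Lip}=1$, and $\rho(sv)=sv$ for every $s\in\R$. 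Thus $\R v$ is a $1$-Lipschitz retract of $X$.

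Finally, I would argue by contradiction: if $X$ had metric Markov cotype $p$ for some $p\in(0,2)$, then applying Lemma~\ref{lem:retraction cotype} with $S=\R v$ and the retraction $\rho$ above would give $N_p(\R)=N_p(\R v)\le\|\rho\|_{\Lip}N_p(X)=N_p(X)<\infty$, contradicting Lemma~\ref{lem:no cotype R}. This is the entire argument, and I do not anticipate any genuine obstacle; the only substantive input beyond the two quoted lemmas is the standard Hahn--Banach observation that lines in Banach spaces are $1$-complemented, which is precisely what lets the failure of metric Markov cotype be transferred from $\R$ up to an arbitrary Banach space $X$.
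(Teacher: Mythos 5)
Your proposal is correct and follows essentially the same route as the paper: exhibit an isometric copy of $\R$ inside $X$ as a $1$-Lipschitz retract, then combine Lemma~\ref{lem:retraction cotype} with Lemma~\ref{lem:no cotype R}. The only (immaterial) difference is that you produce the retraction linearly via Hahn--Banach, whereas the paper cites the fact that $\R$ is a $1$-absolute Lipschitz retract of any metric space.
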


\begin{proof}
$X$ contains an isometric copy of $\R$. Since $\R$ is a $1$-absolute Lipschitz retract (see~\cite[Ch.~1]{BL}), it follows from Lemma~\ref{lem:retraction cotype} that $N_p(X)\ge N_p(\R)$. For $p\in (0,2)$ we have $N_p(\R)=\infty$ by Lemma~\ref{lem:no cotype R}.
\end{proof}

\subsection{Kalton's construction}

For $p\in [1,\infty)$ let $\ell_p^n$ denote (as usual) the space
$\R^n$ equipped with the  norm
$\|x\|_p=(|x_1|^p+\ldots+|x_n|^p)^{1/p}$. The unit ball of
$\ell_p^n$ is denoted below  $B_p^n=\{x\in \R^n:\ \|x\|_p\le 1\}$.
For every $\e\in (0,1)$ there exists a subset $\mathcal{N}$ of
$B_2^n$ such that $|\mathcal{N}|\le (1+2/\e)^n$ and $\min_{y\in
\mathcal{N}} \|x-y\|_2\le \e$ for every $x\in B_2^n$ (see
e.g.~\cite{MS}). In particular, there exists a linear operator
$Q_n:\ell_1^{5^n}\to \ell_2^n$ such that
\begin{equation}\label{eq:def Qn}
B_2^n\supseteq Q_n\left(B_1^{5^n}\right)\supseteq \frac12 B_2^n.
\end{equation}
Indeed, choose $\{x_1,\ldots,x_{5^n}\}\subseteq B_2^n$ with $\min_{i\in \{1,\ldots,5^n\}}\|x-x_i\|_2\le 1/2$ for every $x\in B_2^n$. This implies that the convex hull of $\{\pm x_1,\ldots\pm x_{5^n}\}$ contains $\frac12 B_2^n$. Hence, if we set $Q_n(e_i)=x_i$, where $e_1,\ldots,e_{5^n}$ is the standard basis of $\ell_1^{5^n}$, then the linear extension of $Q_n$ satisfies~\eqref{eq:def Qn}.

In what follows we fix a linear mapping $Q_n:\ell_1^{5^n}\to \ell_2^n$ for which~\eqref{eq:def Qn} holds true, and we also fix a mapping $\phi_n:B_2^n\to 2B_1^{5^n}$ such that $\phi_n(-x)=-\phi_n(x)$ for every $x\in B_2^n$ and  $Q_n\circ \phi_n$  is the identity mapping on $B_2^n$. The fact that such a $\phi_n$ exists is an immediate consequence of~\eqref{eq:def Qn}: simply choose a section $\varphi:B_2^n\to 2 B_1^{5^n}$ of $Q_n$ and define $\phi_n(x)=(\varphi(x)-\varphi(-x))/2$.

For every $\theta\in (0,1]$ consider the following linear subspace of $\ell_1^{5^n}\oplus \ell_2^n$.
\begin{equation}\label{eq:def Y_n}
Y_\theta^n\eqdef \left\{\left(\frac{x}{n^{\theta/4}},Q_n(x)\right):\ x\in \ell_1^{5^n}\right\}\subseteq \ell_1^{5^n}\oplus \ell_2^n.
\end{equation}
Below it will always be understood that $Y_n$ is equipped with the norm inherited from $(\ell_1^{5^n}\oplus \ell_2^n)_1$, i.e., $\|(x,y)\|_{(\ell_1^{5^n}\oplus \ell_2^n)_1}=\|x\|_1+\|y\|_2$ for every $(x,y)\in \ell_1^{5^n}\oplus \ell_2^n$.

Let $A_n\subseteq S^{n-1}=\partial B_2^n$ be a maximal (with respect to inclusion) symmetric set (i.e., $x\in A_n\iff -x\in A_n$)  such that $\|a-b\|_2>1/\sqrt[4]{n}$ for every distinct $a,b\in A_n$ . Thus $\min_{a\in A_n} \|x-a\|_2\le 1/\sqrt[4]{n}$ for every $x\in S^{n-1}$. Define a mapping $f_\theta^n:A_n\to Y_n$ by
\begin{equation}\label{eq:def fn}
\forall\, a\in A_n,\qquad f_\theta^n(a)\eqdef \left(\frac{\phi_n(a)}{n^{\theta/4}},a\right).
\end{equation}
Observe that $f_\theta^n(-a)=-f_\theta^n(a)$ for every $a\in A_n$.

\begin{lemma}\label{lem:f theta holder} For every $n\in \N$, $\theta\in (0,1]$ and $\tau\in [\theta,1]$ we have
$$
\forall\, a,b\in A_n,\qquad \|f_\theta^n(a)-f_\theta^n(b)\|_{Y_\theta^n}\lesssim n^{(\tau-\theta)/4} \|a-b\|_2^\tau.
$$
\end{lemma}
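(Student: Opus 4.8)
The plan is to unwind the definitions and reduce the estimate to the defining separation property of the net $A_n$. First I would observe that, by~\eqref{eq:def Y_n} and~\eqref{eq:def fn}, for all $a,b\in A_n$ one has $f_\theta^n(a)-f_\theta^n(b)=\bigl(n^{-\theta/4}(\phi_n(a)-\phi_n(b)),\,a-b\bigr)$ inside $\ell_1^{5^n}\oplus\ell_2^n$, and since the norm on $Y_\theta^n$ is the one inherited from $(\ell_1^{5^n}\oplus\ell_2^n)_1$,
$$
\|f_\theta^n(a)-f_\theta^n(b)\|_{Y_\theta^n}=\frac{\|\phi_n(a)-\phi_n(b)\|_1}{n^{\theta/4}}+\|a-b\|_2 .
$$
It then suffices to bound each of these two summands by a constant multiple of $n^{(\tau-\theta)/4}\|a-b\|_2^{\tau}$; note that $n^{(\tau-\theta)/4}\ge 1$ since $\tau\ge\theta$.

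For the summand $\|a-b\|_2$ I would use that $a,b$ lie on the unit sphere $S^{n-1}$, so $\|a-b\|_2\le 2$, whence $\|a-b\|_2=\|a-b\|_2^{\tau}\,\|a-b\|_2^{1-\tau}\le 2\,\|a-b\|_2^{\tau}\le 2\,n^{(\tau-\theta)/4}\|a-b\|_2^{\tau}$. For the other summand, the case $a=b$ is trivial, so assume $a\neq b$. Since $\phi_n$ takes values in $2B_1^{5^n}$, we have the crude bound $\|\phi_n(a)-\phi_n(b)\|_1\le 4$. The key point is that $A_n$ was chosen to be $1/\sqrt[4]{n}$-separated, so $n^{1/4}\|a-b\|_2>1$ for distinct $a,b\in A_n$, and therefore $\bigl(n^{1/4}\|a-b\|_2\bigr)^{\tau}>1$ (here $\tau>0$). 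Consequently
$$
\frac{\|\phi_n(a)-\phi_n(b)\|_1}{n^{\theta/4}}\le\frac{4}{n^{\theta/4}}\le\frac{4\bigl(n^{1/4}\|a-b\|_2\bigr)^{\tau}}{n^{\theta/4}}=4\,n^{(\tau-\theta)/4}\|a-b\|_2^{\tau}.
$$
Adding the two estimates gives $\|f_\theta^n(a)-f_\theta^n(b)\|_{Y_\theta^n}\le 6\,n^{(\tau-\theta)/4}\|a-b\|_2^{\tau}$, which is the assertion.

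There is no serious obstacle here: once the definitions are unwound, the inequality is essentially forced, and the only point worth emphasizing is the mechanism by which the crude $\ell_1$-bound $\|\phi_n(a)-\phi_n(b)\|_1\le 4$ — which on its own yields only a decay of order $n^{-\theta/4}$, independent of the distance between $a$ and $b$, and is therefore useless when $\|a-b\|_2$ is small — gets converted into the desired H\"older-type estimate. This conversion succeeds precisely because $A_n$ is separated at scale $n^{-1/4}$, which is exactly the scale built into the normalization $n^{\theta/4}$ in the definition~\eqref{eq:def Y_n} of $Y_\theta^n$, so the minimal separation of the points we ever need to compare already absorbs the relevant power of $n$.
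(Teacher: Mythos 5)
Your proof is correct and follows essentially the same route as the paper's: both split the $(\ell_1^{5^n}\oplus\ell_2^n)_1$-norm into its two summands, bound the $\ell_1$ part crudely by $4/n^{\theta/4}$ using $\phi_n(B_2^n)\subseteq 2B_1^{5^n}$ and then absorb it via the $1/\sqrt[4]{n}$-separation of $A_n$, and bound $\|a-b\|_2\le 2^{1-\tau}\|a-b\|_2^{\tau}$ using $\|a-b\|_2\le 2$. The only difference is cosmetic bookkeeping of constants.
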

\begin{proof} We may assume that $a\neq b$, in which case $1/\sqrt[4]{n}\le \|a-b\|_2\le 2$. Consequently,
\begin{align*}
\|f_\theta^n(a)-f_\theta^n(b)\|_{Y_\theta^n}&=\frac{\|\phi_n(a)-\phi_n(b)\|_1}{n^{\theta/4}}+\|a-b\|_2\\&\le \frac{\|\phi_n(a)\|_1+\|\phi_n(b)\|_1}{n^{\theta/4}}+2^{1-\tau}\|a-b\|_2^\tau\\&
\le\left(4n^{(\tau-\theta)/4}+2^{1-\tau}\right)\|a-b\|_2^\tau.\qedhere
\end{align*}
\end{proof}

\begin{lemma}\label{lem:no holder ext}
Fix $n\in \N$, $L\in (0,\infty)$, $\theta\in (0,1]$ and $\tau\in [\theta,1]$. Suppose that $F:S^{n-1}\to Y_\theta^n$ satisfies \begin{equation}\label{eq:holder assumption F}
\forall\, x,y\in S^{n-1},\qquad \|F(x)-F(y)\|_{Y_\theta^n}\le Ln^{(\tau-\theta)/4} \|x-y\|_2^\tau,
 \end{equation}
 and that $F(a)=f_\theta^n(a)$ for every $a\in A_n$. Then $L\gtrsim n^{\theta/4}$.
\end{lemma}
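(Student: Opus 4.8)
The plan is to work directly with the explicit structure of $Y_\theta^n$. Since every element of $Y_\theta^n$ is of the form $(x/n^{\theta/4},Q_nx)$ for a \emph{unique} $x\in\ell_1^{5^n}$ (the first coordinate already determines $x$), one can write $F(x)=\bigl(g(x)/n^{\theta/4},\,Q_n g(x)\bigr)$ for a map $g\colon S^{n-1}\to\ell_1^{5^n}$, and $F(a)=f_\theta^n(a)$ for $a\in A_n$ translates into $g(a)=\phi_n(a)$, hence $Q_n g(a)=Q_n\phi_n(a)=a$. Because the norm of $Y_\theta^n$ is the $\ell_1$-sum norm, hypothesis~\eqref{eq:holder assumption F} splits into $\|g(x)-g(y)\|_1\le Ln^{\tau/4}\|x-y\|_2^\tau$ and $\|Q_ng(x)-Q_ng(y)\|_2\le Ln^{(\tau-\theta)/4}\|x-y\|_2^\tau$. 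Put $K:=Ln^{\tau/4}$. Using $g|_{A_n}=\phi_n|_{A_n}$, $\phi_n(B_2^n)\subseteq 2B_1^{5^n}$, and that $A_n$ is a $n^{-1/4}$-net of $S^{n-1}$, the first inequality gives $\|g(x)\|_1\le 2+L$ for all $x$; the second, together with $Q_ng|_{A_n}=\mathrm{id}$, gives $\|Q_ng(x)-x\|_2\le\eta$ on $S^{n-1}$ with $\eta:=Ln^{-\theta/4}+n^{-1/4}$. Finally, since $A_n$ is symmetric it contains an antipodal pair $\pm a$, and $2=\|Q_ng(a)-Q_ng(-a)\|_2\le\|g(a)-g(-a)\|_1\le K\,2^\tau$, so $K\ge 1$ always.

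If $\eta\ge 1/4$ then $Ln^{-\theta/4}\ge 1/4-n^{-1/4}\gtrsim 1$ and we are done, so I will assume $\eta<1/4$. The idea is then to smooth $g$ at the right scale and run a degree/volume argument. Extend $g$ radially by $\hat g(z):=\|z\|_2\,g(z/\|z\|_2)$; a routine computation (using $\|g\|_1\le 2+L\le 3K$) shows $\hat g$ is $\tau$-H\"older with constant $\lesssim K$ on a neighbourhood of $B_2^n$. Mollify $\hat g$ at scale $\rho:=(4C_1K)^{-1/\tau}$ — note $\rho\le 1/2$ since $K\ge 1$ — to obtain a smooth $\hat g_\rho\colon B_2^n\to\ell_1^{5^n}$ with $\|\hat g_\rho-\hat g\|_{1}\le C_1K\rho^\tau=1/4$ and, crucially, $\|D\hat g_\rho(z)\|_{\ell_2^n\to\ell_1^{5^n}}\le C_1K\rho^{\tau-1}$ for every $z\in B_2^n$, where $C_1$ is absolute. (Mollification, unlike piecewise-linear interpolation over a triangulation, introduces \emph{no} spurious dimensional factor in this $\ell_1$-valued derivative bound, because it never expands $g$ into its $5^n$ coordinates.) Then $Q_n\hat g_\rho\colon B_2^n\to\ell_2^n$ is smooth and stays within $\|\hat g_\rho-\hat g\|_1+\eta<1/2$ of the identity on $S^{n-1}$, so a standard Brouwer-degree argument yields $Q_n\hat g_\rho(B_2^n)\supseteq\tfrac12 B_2^n$.

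The crux is now a determinant inequality. By the area formula,
\[
2^{-n}\,\mathrm{vol}(B_2^n)\ \le\ \mathrm{vol}\bigl(Q_n\hat g_\rho(B_2^n)\bigr)\ \le\ \int_{B_2^n}\bigl|\det\bigl(Q_n D\hat g_\rho(z)\bigr)\bigr|\,dz .
\]
For any linear $T\colon\R^n\to\R^{5^n}$ with rows $t^{(1)},\dots,t^{(5^n)}\in\R^n$ one has $Q_nT=\sum_j x^{(j)}(t^{(j)})^{\top}$, where $x^{(j)}:=Q_ne_j$ satisfies $\|x^{(j)}\|_2\le 1$ by~\eqref{eq:def Qn}; Cauchy--Binet and Hadamard's inequality then give
\[
|\det(Q_nT)|\le\sum_{|S|=n}\bigl|\det\bigl([t^{(j)}]_{j\in S}\bigr)\bigr|=\mathrm{vol}\Bigl(\textstyle\sum_j[0,t^{(j)}]\Bigr)\le\mathrm{vol}\bigl(T^{\top}(B_\infty^{5^n})\bigr)\le\|T\|_{\ell_2^n\to\ell_1^{5^n}}^{\,n}\,\mathrm{vol}(B_2^n),
\]
since $\|T^{\top}\|_{\ell_\infty^{5^n}\to\ell_2^n}=\|T\|_{\ell_2^n\to\ell_1^{5^n}}$. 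Applying this with $T=D\hat g_\rho(z)$, the integral above is $\le\mathrm{vol}(B_2^n)^2(C_1K\rho^{\tau-1})^n$, whence $C_1K\rho^{\tau-1}\gtrsim\mathrm{vol}(B_2^n)^{-1/n}\asymp\sqrt n$. Substituting $\rho=(4C_1K)^{-1/\tau}$ gives $K\gtrsim n^{\tau/2}$ with an absolute constant, i.e.\ $L=Kn^{-\tau/4}\gtrsim n^{\tau/4}\ge n^{\theta/4}$ (using $\tau\ge\theta$). Small $n$ are handled by the trivial bound $L=Kn^{-\tau/4}\ge n^{-1/4}$, which for an appropriately small absolute constant already gives $L\gtrsim n^{\theta/4}$ in that range.

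The main obstacle is precisely the determinant/volume estimate of the third paragraph: one must pass to the $\ell_1$-valued coordinate $g$ and use that $Q_n$ is an $\ell_1\!\to\!\ell_2$ contraction with bounded columns, so that $|\det(Q_nD\hat g_\rho(z))|$ is controlled by the volume of a zonotope \emph{inscribed in a Euclidean ball} of radius $\|D\hat g_\rho(z)\|_{\ell_2\to\ell_1}$. It is the extra factor $\mathrm{vol}(B_2^n)$ coming from this inscription that produces the gain of $\sqrt n$; the naive operator-norm bound $|\det|\le\|\cdot\|^n$ would only yield $L\gtrsim 1$. The secondary difficulty is calibrating the smoothing scale: $\rho$ must be fine enough that $Q_n\hat g_\rho$ still has Brouwer degree $1$ over $\tfrac12 B_2^n$ (which needs $C_1K\rho^\tau<1/2-\eta$) yet coarse enough that $\|D\hat g_\rho\|_{\ell_2\to\ell_1}\lesssim K\rho^{\tau-1}$ stays as small as possible; optimizing over $\rho$ is exactly where $\tau\ge\theta$ is used to bring the final exponent down to $\theta/4$.
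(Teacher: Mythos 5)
Your reduction to the map $g$, the estimates $\|g\|_1\le 2+L$, $\|Q_ng-\mathrm{id}\|_2\le\eta$, $K\ge 1$, the degree argument, and the Cauchy--Binet/zonotope bound $|\det(Q_nT)|\le\|T\|_{\ell_2^n\to\ell_1^{5^n}}^n\mathrm{vol}(B_2^n)$ are all correct. The gap is in the mollification step, precisely at the parenthetical claim that mollification ``introduces no spurious dimensional factor'' in the derivative bound. Writing $D\hat g_\rho(z)[v]=\rho^{-1}\int(\hat g(z-\rho w)-\hat g(z))\langle\nabla\chi(w),v\rangle\,dw$ for a probability density $\chi$ on $B_2^n$, the only information you have about the integrand is its $\tau$-H\"older seminorm, so by Kantorovich duality the best possible bound is $CK\rho^{\tau-1}$ times the Wasserstein cost (with cost $\|w-w'\|_2^\tau$) of transporting the positive part of $\langle\nabla\chi(\cdot),v\rangle$ to its negative part. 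For \emph{any} probability density on $B_2^n$ the isoperimetric inequality forces $\int\|\nabla\chi\|_2\ge n$, so this signed measure has total variation $\gtrsim\sqrt n\,\|v\|_2$ (for radial $\chi$), and a constant fraction of its positive part sits at distance $\gtrsim n^{-1/2}$ from the separating hyperplane; hence the transport cost is $\asymp\|v\|_2\,n^{(1-\tau)/2}$, and there exist $\tau$-H\"older functions whose mollification has $\|D\hat g_\rho\|_{\ell_2\to\ell_1}\asymp n^{(1-\tau)/2}K\rho^{\tau-1}$. With this corrected bound your final inequality becomes $n^{(1-\tau)/2}K\rho^{\tau-1}\gtrsim\sqrt n$, which after substituting $\rho\asymp K^{-1/\tau}$ yields only $K\gtrsim n^{\tau^2/2}$, i.e.\ $L\gtrsim n^{\tau^2/2-\tau/4}$. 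This matches $n^{\theta/4}$ only when $\tau=\theta=1$ and is vacuous for $\tau\le 1/2$; in particular it fails in the regime $\theta=2/n$, $\tau=2/p$ in which the lemma is invoked to prove Theorem~\ref{thm:no cotype}. So your argument does give a correct (and genuinely different, volumetric) proof of the case $\tau=\theta=1$ needed for Theorem~\ref{thm:aspect}, where $\hat g$ is Lipschitz and $D\hat g_\rho=(D\hat g)_\rho$ makes the derivative bound dimension-free, but it does not prove the lemma as stated.

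The underlying difficulty is that for $\tau<1$ there is no way to differentiate: any Lipschitz or smooth approximation of a $\tau$-H\"older map at the scale required to preserve surjectivity onto $\tfrac12 B_2^n$ must have derivative of order $n^{(1-\tau)/2}K\rho^{\tau-1}$, which exactly cancels the $\sqrt n$ gained from the determinant estimate. The paper's proof circumvents this entirely: after symmetrizing so that $\psi=n^{\theta/4}\cdot(\text{first coordinate of }F)$ is odd, it uses concentration of measure on $S^{n-1}$ (the H\"older hypothesis enters only through isoperimetric enlargement of the half-measure set $\{\langle\psi(\cdot),y\rangle\le 0\}$) to show $\int_{S^{n-1}}|\langle\psi(x),y\rangle|\,d\sigma_{n-1}(x)\lesssim L n^{-\tau/4}$ uniformly over $y\in B_\infty^{5^n}$, and then Pietsch domination together with Grothendieck's theorem ($\pi_1(Q_n)\le K_G$) to conclude $\int_{S^{n-1}}\|Q_n\psi(x)\|_2\,d\sigma_{n-1}(x)\lesssim Ln^{-\tau/4}$, contradicting $\|Q_n\psi\|_2\ge 1-Ln^{-\theta/4}$. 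No derivative, and hence no dimensional loss, ever appears.
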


\begin{proof} By replacing $F(x)$ with $(F(x)-F(-x))/2$ we may assume without loss of generality that $F(-x)=-F(x)$ for every $x\in S^{n-1}$. Since $F$ takes values in $Y_\theta^n$, it follows from~\eqref{eq:def Y_n} that there exists a mapping $\psi: S^{n-1}\to \ell_1^{5^n}$ such that
\begin{equation}\label{eq:introduce psi}
\forall\, x\in S^{n-1},\qquad F(x)=\left(\frac{\psi(x)}{n^{\theta/4}},Q_n(\psi(x))\right).
\end{equation}

Let $\sigma_{n-1}$ denote the normalized Haar measure on $S^{n-1}$. We claim that for every $y\in B_\infty^{5^n}=[-1,1]^{5^n}$ we have
\begin{equation}\label{eq:l1 norm bound psi}
\int_{S^{n-1}}|\langle \psi(x),y\rangle|d\sigma_{n-1}(x)\lesssim \frac{L}{n^{\tau/4}}.
\end{equation}
The proof of~\eqref{eq:l1 norm bound psi} is a standard application of the concentration of measure phenomenon on $S^{n-1}$. Indeed, consider the set $$
U_y\eqdef \{x\in S^{n-1}:\ \langle \psi(x),y\rangle \le 0\}\subseteq S^{n-1}.
$$
Since $\psi(-x)=-\psi(x)$ for every $x\in S^{n-1}$ we have $\sigma_{n-1}(U_y)\ge \frac12$. For $x\in S^{n-1}$ and  $t\in (0,\infty)$ note that
\begin{equation}\label{eq:far from Uy}
\langle \psi(x),y\rangle\ge t\implies \inf_{u\in U_y} \|x-u\|_2\ge \left(\frac{t}{Ln^{\tau/4}}\right)^{1/\tau}.
\end{equation}
Indeed, if $\langle \psi(x),y\rangle\ge t$ then for every $u\in U_y$, since $ \langle \psi(u),y\rangle \le 0$,
\begin{multline*}
t\le \langle \psi(x)-\psi(u),y\rangle \le \|\psi(x)-\psi(u)\|_1\\\stackrel{\eqref{eq:introduce psi}}{\le} n^{\theta/4}
 \|F(x)-F(u)\|_{Y_\theta^n}\stackrel{\eqref{eq:holder assumption F}}{\le} Ln^{\tau/4}\cdot\|x-u\|_2^\tau,
\end{multline*}
implying~\eqref{eq:far from Uy}. By the isoperimetric inequality on $S^{n-1}$ (see e.g.~\cite{MS}) it follows from~\eqref{eq:far from Uy} that
$$
\sigma_{n-1}\left(\left\{x\in S^{n-1}:\ \langle \psi(x),y\rangle\ge t\right\}\right)\lesssim \exp\left(-cn\left(\frac{t}{Ln^{\tau/4}}\right)^{2/\tau}\right),
 $$
where $c\in (0,\infty)$ is a universal constant. By symmetry, the same estimate holds true for $\sigma_{n-1}\left(\left\{x\in S^{n-1}:\ \langle \psi(x),y\rangle\le -t\right\}\right)$, and therefore
$$
\sigma_{n-1}\left(\left\{x\in S^{n-1}:\ |\langle \psi(x),y\rangle|\ge t\right\}\right)\lesssim \exp\left(-c\left(\frac{tn^{\tau/4}}{L}\right)^{2/\tau}\right).
 $$
Consequently,
\begin{multline*}
\int_{S^{n-1}}|\langle \psi(x),y\rangle|d\sigma_{n-1}(x)\lesssim \int_0^\infty \exp\left(-c\left(\frac{tn^{\tau/4}}{L}\right)^{2/\tau}\right) dt\\
= \frac{L\tau}{2c^{\tau/2}n^{\tau/4}}\int_0^\infty s^{\frac{\tau}{2}{-1}} e^{-s}ds=\frac{L\Gamma(1+\tau/2)}{c^{\tau/2}n^{\tau/4}}\lesssim \frac{L}{n^{\tau/4}},
\end{multline*}
completing the proof of~\eqref{eq:l1 norm bound psi}.

The Pietsch Domination Theorem~\cite{Pie67}  (see also~\cite[Prop.~3.1]{LP68}) implies that there exists a Borel probability measure $\mu$ on $B_\infty^{5^n}$ such that
\begin{equation}\label{eq:pietsch}
\forall\, x\in \ell_1^{5^n},\qquad \|Q_n(x)\|_2\le \pi_1(Q_n)\int_{B_\infty^{5^n}} |\langle x,y \rangle|d\mu(y),
\end{equation}
where $\pi_1(Q_n)$ is the $1$-summing norm of $Q_n$, i.e.,
$$
\pi_1(Q_n)=\sup_{k\in \N} \sup \left\{\sum_{i=1}^k \|Q_n(x_i)\|_2:\ \sup_{y\in B_\infty^{5^n}} \sum_{i=1}^k |\langle x_i,y\rangle|\le 1\right\}.
$$
A theorem of Grothendieck~\cite[Cor.~1]{Gro53} (see also~\cite[Thm.~4.1]{LP68}) implies that  $\pi_1(Q_n)\le K_G \|Q_n\|_{\ell_1^{5^n}\to \ell_2^n}$, where $K_G\in [1,2]$ is the Grothendieck constant. Recalling~\eqref{eq:def Qn}, we have $\|Q_n\|_{\ell_1^{5^n}\to \ell_2^n}\le 1$. Hence,
\begin{multline}\label{eq:fubini pietsch}
\int_{S^{n-1}} \|Q_n(\psi(x))\|_2d\sigma_{n-1}(x)\\\stackrel{\eqref{eq:pietsch}}{\lesssim} \int_{S^{n-1}}\int_{B_\infty^{5^n}}  |\langle \psi(x),y \rangle|d\mu(y)d\sigma_{n-1}(x)\stackrel{\eqref{eq:l1 norm bound psi}}{\lesssim} \frac{L}{n^{\tau/4}}.
\end{multline}
For $x\in S^{n-1}$ choose $a\in A_n$ such that $\|x-a\|_{2}\le 1/\sqrt[4]{n}$. Recalling~\eqref{eq:def fn} and~\eqref{eq:introduce psi}, since $F(a)=f_\theta^n(a)$ we have $Q_n(\psi(a))=a$. Hence,
\begin{multline}\label{eq:to contrast tau theta}
\|Q_n(\psi(x))\|_2\ge 1-\|Q_n(\psi(x))-Q_n(\psi(a))\|_2\\\stackrel{\eqref{eq:introduce psi}}{\ge} 1-\|F(x)-F(a)\|_{Y_\theta^n}
\stackrel{\eqref{eq:holder assumption F}}{\ge}  1-\frac{Ln^{(\tau-\theta)/4}}{n^{\tau/4}}=1-\frac{L}{n^{\theta/4}}.
\end{multline}
By combining~\eqref{eq:fubini pietsch} and~\eqref{eq:to contrast tau theta}, and recalling that $\tau\ge \theta$, the proof of Lemma~\ref{lem:no holder ext} is complete.
\end{proof}

\begin{proof}[Proof of Theorem~\ref{thm:no cotype}] The ensuing deduction of Theorem~\ref{thm:no cotype}
from Lemma~\ref{lem:no holder ext} follows an idea of~\cite{Nao01}. Fix $n\in \N$ and $p\in [2,n]$. Consider the metric on $\ell_2$ given by
$$\forall\, x,y\in \ell_2,\qquad d_p(x,y)\eqdef \|x-y\|_2^{2/p}.$$

Since by~\cite{Ball} the Markov type $2$ constant of $\ell_2$ satisfies $M_2(\ell_2)=1$, the Markov type $p$ constant of $(\ell_2,d_p)$ satisfies
\begin{equation}\label{eq:markov type of snowflake}
M_p(\ell_2,d_p)=M_2(\ell_2)=1.
\end{equation}
Lemma~\ref{lem:f theta holder} with $\theta=2/n$ and $\tau=2/p$ asserts that the function $$f_{2/n}^{2^{n^2}}:A_{2^{n^2}}\to Y_{2/n}^{2^{n^2}}$$ is $K$-Lipschitz in the metric $d_p$, where
$$
K\lesssim 2^{\frac{n^2}{4}\left(\frac{2}{p}-\frac{2}{n}\right)}.
$$
In light of~\eqref{eq:markov type of snowflake}, since the
$Y_{2/n}^{2^{n^2}}$ is finite dimensional it follows from
Corollary~\ref{coro:bona fide lip ext} that there exists a function
$$ F:\ell_2^{2^{n^2}}\to Y_{2/n}^{2^{n^2}}$$ that extends
$f_{2/n}^{2^{n^2}}$ and satisfies~\eqref{eq:holder assumption F}
with $\theta=2/n$, $\tau=2/p$ and
$$
L\lesssim N_p\left(Y_{2/n}^{2^{n^2}}\right).$$
We therefore deduce from Lemma~\ref{lem:no holder ext} that
\begin{equation}\label{eq:Np big fixed n}
\forall\, p\in [2,n],\qquad N_p\left(Y_{2/n}^{2^{n^2}}\right)\gtrsim 2^{n/2}.
\end{equation}
Consider the $\ell_1$ direct sum
$$
Y\eqdef \left(\bigoplus_{n=1}^\infty Y_{2/n}^{2^{n^2}}\right)_1.
$$
For every $n\in \N$ the restriction to the $n$th coordinate is a $1$-Lipschitz retraction from $Y$ onto $Y_{2/n}^{2^{n^2}}$, so by Lemma~\ref{lem:retraction cotype} we have
$$
\forall\, p\in [2,\infty),\qquad N_p(Y)\ge \sup_{n\in \N}  N_p\left(Y_{2/n}^{2^{n^2}}\right)\stackrel{\eqref{eq:Np big fixed n}}{=}\infty.
$$

By Corollary~\ref{coro:no cotype banach} we also have $N_p(Y)=\infty$ for $p\in (0,2)$, so in order to complete the proof of Theorem~\ref{thm:no cotype} it remains to show that $Y$ is isomorphic to a subspace of $\ell_1$. Since $Y$ is the $\ell_1$ direct sum of the spaces
$$
Y_{2/n}^{2^{n^2}}\subseteq \ell_1^{5^{2^{n^2}}}\oplus \ell_2^{2^{n^2}},
$$
it remains to recall that  $\ell_2^k$ is $(1+\e)$-isomorphic to a
subspace of $\ell_1$ for every $\e\in (0,1)$ and $k\in \N$ (e.g. by
Dvoretzky's theorem~\cite{Dvo60}).
\end{proof}

\begin{proof}[Proof of Theorem~\ref{thm:aspect}]
By Lemma~\ref{lem:f theta holder} and Lemma~\ref{lem:no holder ext} with $\theta=\tau=1$,
\begin{equation}\label{eq:for Z_n}
e\left(S^{n-1},A_n,Y_1^n\right)\gtrsim \sqrt[4]{n}.
\end{equation}
Since the diameter of $A_n$ is at most $2$ and the minimal nonzero distance in $A_n$ is at least $1/\sqrt[4]{n}$, the proof of Theorem~\ref{thm:aspect} is complete.
\end{proof}

Observe in passing that since $Y_1^n$ is $5^n$-dimensional, \eqref{eq:for Z_n} also implies~\eqref{eq:N2 Z_n lower} with $Z_{5^n}=Y_1^n$. For general $m\in \N$, choose $n\in \N$ such that $5^{n-1}\le m< 5^n$ and set $Z_m=\left(Y_1^{n-1}\oplus \ell_1^{m-5^{n-1}}\right)_1$.

\section{Comparison with Ball's approach}\label{sec:ball's cotype}

Fix $n\in \N$, $t\in [1,\infty)$ and let $A\in M_n(\R)$ be a
stochastic matrix that is reversible with respect to $\pi\in
\Delta^{n-1}$. Since $A$ has norm $1$ when viewed as an operator on
$L_2(\pi)$, we can consider the following matrix.
\begin{equation}\label{eq:defG}
\mathscr{B}_t(A)\eqdef \frac{1}{t}\sum_{s=1}^\infty \left(1-\frac{1}{t}\right)^sA^s\in M_n(\R).
\end{equation}
We also denote the corresponding Green's matrix by
$$
\G_t(A)\eqdef \frac{1}{t}I_n+\mathscr{B}_t(A)=\frac{1}{t}\left(I_n-\left(1-\frac{1}{t}\right)A\right)^{-1},
$$
where $I_n\in M_n(\R)$ denotes the identity matrix.

In~\cite{Ball} Ball worked with following {\em linear} invariant of
Banach spaces. For $p\in (0,\infty)$ say that a Banach space
$(X,\|\cdot\|_X)$ has Markov cotype $p$ with constant $N\in
(0,\infty)$ if for every $n\in \N$ and $t\in [1,\infty)$, if
$A=(a_{ij})\in M_n(\R)$ is a symmetric stochastic matrix that is
reversible relative to $\pi\in \Delta^{n-1}$ and $x_1,\ldots,x_n\in
X$ then
\begin{multline*}
(t-1)\sum_{i=1}^n\sum_{j=1}^n \pi_i a_{ij} \left\|\sum_{k=1}^n\greenmat_t(A)_{ik}x_k-
\sum_{\ell=1}^n\greenmat_t(A)_{j\ell}x_\ell\right\|_X^p\\
+\sum_{i=1}^n \pi_i \left\|x_i-\sum_{j=1}^n\greenmat_t(A)_{ij}x_j
\right\|_X^p \le N^p\sum_{i=1}^n\sum_{j=1}^n \pi_i
\mathscr{B}_t(A)_{ij} \|x_i-x_j\|_X^p.
\end{multline*}

As we shall see shortly, in Banach spaces Markov cotype $p$ implies  metric Markov
cotype $p$ as in Definition~\ref{def:markov cotype}, but their equivalence
remains open. Note that Ball proved in~\cite{Ball} that $\ell_1$ fails to
have Markov cotype $2$, so this question has relevance to
Question~\ref{Q:does l1 have cotype}.

In the closing remarks of his paper~\cite{Ball}, Ball proposed the
following two step definition of \emph{metric} Markov cotype for metric spaces.
First, given $p\in [1,\infty)$ say that a metric space $(X,d_X)$ is
$p$-approximately convex if there exists $K\in (0,\infty)$ with the
following property. Fix $m,n\in \N$ and let $B=(b_{ij})\in
M_{n\times m}(\R)$ and $C=(c_{ij})\in M_n(\R)$ be stochastic
matrices, such that $C$ is reversible relative to $\pi \in
\Delta^{n-1}$. Then for every $z_1,\ldots,z_m\in X$ there exist
$w_1,\ldots,w_n\in X$ such that
\begin{multline}\label{eq:def approximate convexity}
\sum_{i=1}^n\sum_{r=1}^m \pi_i b_{ir} d_X(w_i,z_r)^p+
\sum_{i=1}^n\sum_{j=1}^n \pi_i c_{ij} d_X(w_i,w_j)^p\\ \le K^p
\sum_{r=1}^m\sum_{s=1}^m (B^*D_\pi CB)_{rs} d_X(z_r,z_s)^p,
\end{multline}
where $D_\pi\in M_n(\R)$ is given as in~\eqref{eq:def diagonal},
i.e., it is the diagonal matrix whose diagonal equals $\pi$.
Assuming that $(X,d_X)$ is approximately convex, Ball defined it to
have metric Markov cotype $p$ if there exists $N\in (0,\infty)$ such that
for every $n,t\in \N$, if $A=(a_{ij})\in M_n(\R)$ is stochastic and
reversible relative to $\pi\in \Delta^{n-1}$ then for every
$x_1,\ldots,x_n\in X$ there exist $y_1,\ldots,y_n\in X$ such that
\begin{multline}\label{eq:def ball's cotype}
\sum_{i=1}^n\pi_i d_X(x_i,y_i)^p+(t-1)\sum_{i=1}^n\sum_{j=1}^n
a_{ij}
\pi_i d_X(y_i,y_j)^p\\
\le N^p\sum_{i=1}^n\sum_{j=1}^n \pi_i\mathscr{B}_t(A)_{ij}
d_X(x_i,x_j)^p.
\end{multline}
Denote by $N_p^B(X)$ the infimum over those $N\in (0,\infty)$ for
which~\eqref{eq:def ball's cotype} holds true.%

By Lemma~\ref{lem:CN} {every metric space} is a $p$-approximately
convex (with $K$ in~\eqref{eq:def approximate convexity} at most
$6$). So, the first step of Ball's definition is not needed. Observe
also that for Banach spaces Markov cotype $p$ trivially
implies~\eqref{eq:def ball's cotype}. Moreover, there is an
immediate link between~\eqref{eq:def ball's cotype}
and~\eqref{def:markov cotype}: due to~\eqref{eq:defG} we have
$\A_t(A)_{ij}\lesssim \mathscr{B}_t(A)_{ij}$ for every integer $t\ge
2$ and $i,j\in \{1,\ldots,n\}$. Therefore every metric space
$(X,d_X)$ satisfies $N_p^B(X)\lesssim N_p(X)$. Despite the fact that
one cannot bound from above (entry-wise) the matrix
$\mathscr{B}_t(A)$  by a constant multiple of the matrix $\A_t(A)$,
the following lemma implies that $N_p(X)\lesssim N_p^B(X)$.

\begin{lemma} Let $(X,d_X)$ be a metric space. Suppose that $n,t\in
\N$ and $A\in M_n(\R)$ is a stochastic matrix that is reversible
relative to $\pi\in \Delta^{n-1}$. Then for every $p\in [1,\infty)$
and $x_1,\ldots,x_n\in X$ we have
\begin{multline}\label{eq:desired p^p}
\sum_{i=1}^n\sum_{j=1}^n \pi_i \mathscr{B}_t(A)_{ij} d_X(x_i,x_j)^p\\\lesssim
p2^p \sum_{i=1}^n\sum_{j=1}^n  \pi_i \A_{\lceil pt\rceil}(A)_{ij}  d_X(x_i,x_j)^p.
\end{multline}
\end{lemma}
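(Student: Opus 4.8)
The plan is to reduce the inequality~\eqref{eq:desired p^p} to a scalar statement about the non-negative numbers $\phi_s\eqdef\sum_{i=1}^n\sum_{j=1}^n\pi_i(A^s)_{ij}d_X(x_i,x_j)^p$. Set $\Sigma_s\eqdef\phi_1+\cdots+\phi_s$; by~\eqref{eq:cesaro notation}, \eqref{eq:desired p^p} asserts precisely that the left-hand side is $\lesssim p2^p\,\Sigma_{\lceil pt\rceil}/\lceil pt\rceil$. Expanding the definition~\eqref{eq:defG} of $\mathscr{B}_t(A)$, interchanging the (finite-index, absolutely convergent) sums, and performing one Abel summation would give the identity
\[
\sum_{i=1}^n\sum_{j=1}^n\pi_i\mathscr{B}_t(A)_{ij}d_X(x_i,x_j)^p=\frac1t\sum_{s=1}^\infty(1-1/t)^s\phi_s=\frac1{t^2}\sum_{s=1}^\infty(1-1/t)^s\Sigma_s,
\]
the boundary term in the Abel summation vanishing because only finitely many points $x_i$ occur, so that $\Sigma_s\le s\cdot\max_{i,j}d_X(x_i,x_j)^p$ grows linearly while $(1-1/t)^s$ decays geometrically (the case $t=1$ is trivial since $\mathscr{B}_1(A)=0$, so assume $t\ge2$). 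Put $T\eqdef\lceil pt\rceil$, so that $p\le T/t\le p+1$.

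Next I would split the last sum at $s=T$. Since $s\mapsto\Sigma_s$ is non-decreasing, the head is at most $\frac1{t^2}\Sigma_T\sum_{s\ge1}(1-1/t)^s\le\frac1t\Sigma_T$, which is within budget because $\frac1t\Sigma_T=\frac Tt\cdot\frac{\Sigma_T}{T}\le(p+1)\frac{\Sigma_T}{T}$. The substance is the tail $s>T$, where $\Sigma_s$ exceeds $\Sigma_T$ and must be controlled. Applying the triangle inequality together with the reversibility of $A^T$ relative to $\pi$, exactly as in the proof of Lemma~\ref{lem:control by cesaro}, gives $\phi_j\le 2^{p-1}(\phi_T+\phi_{j-T})$ for every integer $j\ge T$; summing this over a block $j\in\{kT+1,\dots,(k+1)T\}$ and invoking Lemma~\ref{lem:control by cesaro} itself in the form $T\phi_T\le 2^p\Sigma_T$ yields the recursion $\Sigma_{(k+1)T}\le(1+2^{p-1})\Sigma_{kT}+2^{2p-1}\Sigma_T$ for $k\ge1$. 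As $1+2^{p-1}\le 2^p$ when $p\ge1$, solving this recursion with $\Sigma_T$ as base gives $\Sigma_{kT}\le 2^{pk+1}\Sigma_T$ for all $k\ge1$.

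Feeding this bound into the tail, the $k$-th block contributes at most $\frac1{t^2}\cdot t(1-1/t)^{kT}\cdot\Sigma_{(k+1)T}$; using $(1-1/t)^{kT}\le e^{-kT/t}\le e^{-kp}$ (this is where $T/t\ge p$ is used) and $\Sigma_{(k+1)T}\le 2^{p(k+1)+1}\Sigma_T$, this is at most $\frac{2^{p+1}}{t}(2/e)^{pk}\Sigma_T$. Since $2/e<1$, summing over $k\ge1$ gives a tail contribution of $O(2^p\Sigma_T/t)$. Adding the head, the left-hand side of~\eqref{eq:desired p^p} is $\lesssim 2^p\Sigma_T/t\le(p+1)2^p\Sigma_T/T$, which is exactly the claimed bound.

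The step I expect to be the main obstacle — and the one requiring care — is keeping the power of two at $2^p$ rather than $4^p$. The naive approach, which first proves a pointwise bound on $\phi_s$ for large $s$ by iterating $\phi_s\le 2^{p-1}(\phi_k+\phi_{s-k})$ and then invokes Lemma~\ref{lem:control by cesaro}, multiplies the two factors $2^{p-1}$ and $2^p$ and produces $4^p$; moreover a self-improving ``absorption'' argument fails here because the relevant recursion constant is $2^{p-1}/(T/t)\le 2^{p-1}/p$, which is already $\ge1$ for $p\le2$. The device that repairs this is to estimate only the cumulative sums $\Sigma_{kT}$ and never the individual $\phi_s$: the resulting exponential-in-$k$ growth $2^{pk}$ of $\Sigma_{kT}$ is then harmless, because $\mathscr{B}_t(A)$ assigns the $k$-th block of length $T=\lceil pt\rceil$ total weight only about $e^{-kp}$, and $2<e$.
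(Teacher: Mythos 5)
Your proof is correct, and it takes a genuinely different route from the paper's. The paper writes each exponent as $s=mu+r$ with $u=\lceil pt\rceil$, bounds $\sum_{i,j}\pi_i(A^{mu})_{ij}d_X(x_i,x_j)^p$ by $m^p\sum_{i,j}\pi_i(A^{u})_{ij}d_X(x_i,x_j)^p$ via a H\"older/chaining inequality along $m$ steps of length $u$, converts to the Ces\`aro average with Lemma~\ref{lem:control by cesaro}, and then sums the weights $m^p(1-1/t)^{mu+r}$ directly using Stirling's formula; the crucial numerical fact is that the resulting factor $(4/e)^p$ is at most $2^p$. You instead perform an Abel summation to rewrite the Green's-matrix sum as $\frac1{t^2}\sum_s(1-1/t)^s\Sigma_s$, and control the partial sums through the block recursion $\Sigma_{(k+1)T}\le(1+2^{p-1})\Sigma_{kT}+2^{2p-1}\Sigma_T$, whose solution $\Sigma_{kT}\le 2^{pk+1}\Sigma_T$ grows like $2^{pk}$ while the weights decay like $e^{-pk}$; here the crucial fact is $2<e$. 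The two arguments are of comparable length and exploit essentially the same coincidence in different guises; yours avoids the H\"older-along-paths step and Stirling's formula in favor of a geometric series, and, as you note, the decision to estimate only cumulative sums rather than individual $\phi_s$ is exactly what keeps the constant at $2^p$ rather than $4^p$. Two very minor points of care: the bound $\Sigma_{kT}\le 2^{pk+1}\Sigma_T$ does not follow by naive induction on that exact form (the inhomogeneous term $2^{2p-1}\Sigma_T$ gets in the way) but does follow by summing the explicit solution of the recursion, using $2^{2p-1}/(2^p-1)\le 2^p$; and your per-block weight bound $t(1-1/t)^{kT}$ should be read as the full geometric tail $\sum_{s>kT}(1-1/t)^s$, which is how you evidently intend it.
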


\begin{proof} Write
$
u\eqdef \lceil pt\rceil$ and note that for every $m\in \N$ we have
\begin{align}
&\nonumber
\sum_{i=1}^n\sum_{j=1}^n \pi_i (A^{mu})_{ij} d_X(x_i,x_j)^p\\&=\nonumber\sum_{i\in \{1,\ldots,n\}^{m+1}}
\pi_{i_1}\left(\prod_{a=1}^{m} (A^{u})_{i_a,i_{a+1}}\right)d_X(x_{i_1},x_{i_{m+1}})^p\\
&\le \label{eq:m holder}\sum_{i\in \{1,\ldots,n\}^{m+1}}
\pi_{i_1}\left(\prod_{a=1}^{m} (A^{u})_{i_a,i_{a+1}}\right)m^{p-1}\sum_{a=1}^{m} d_X(x_{i_a},x_{i_{a+1}})^p\\
&=\nonumber m^p \sum_{i=1}^n\sum_{j=1}^n \pi_i (A^{u})_{ij}d_X(x_i,x_j)^p\\
&\le \label{eq:use cesaro lemma} (2m)^p\sum_{i=1}^n\sum_{j=1}^n \pi_i\A_{u}(A)_{ij} d_X(x_i,x_j)^p,
\end{align}
where in~\eqref{eq:m holder} we used the triangle inequality and
H\"older's inequality, and in~\eqref{eq:use cesaro lemma} we used
Lemma~\ref{lem:control by cesaro}.

For every $m\in \N\cup \{0\}$ and $r\in \{0,\ldots,u-1\}$ we have
\begin{align}
&\nonumber\sum_{i=1}^n\sum_{j=1}^n \pi_i(A^{mu+r})_{ij}d_X(x_i,x_j)^p
\\&=\nonumber\sum_{i=1}^n\sum_{j=1}^n \sum_{k=1}^n \pi_i(A^{mu})_{ik}(A^r)_{kj} d_X(x_i,x_j)^p\\
&\le \nonumber 2^{p-1}\sum_{i=1}^n\sum_{j=1}^n
\sum_{k=1}^n \pi_i(A^{mu})_{ik}(A^r)_{kj} \big(d_X(x_i,x_k)^p+d_X(x_k,x_j)^p\big)\\
&=\nonumber 2^{p-1}\sum_{i=1}^n\sum_{j=1}^n\pi_i(A^{mu})_{ij} d_X(x_i,x_j)^p
+2^{p-1}\sum_{i=1}^n\sum_{j=1}^n\pi_i(A^r)_{ij} d_X(x_i,x_j)^p\\
&\le \frac12\sum_{i=1}^n\sum_{j=1}^n \pi_i\big((4m)^p\A_{u}(A)_{ij}+2^p(A^r)_{ij} \big) d_X(x_i,x_j)^p,
 \label{eq:use m path}
\end{align}
where in~\eqref{eq:use m path} we used~\eqref{eq:use cesaro lemma}.

Recalling the definition of $\mathscr{B}_t(A)$ in~\eqref{eq:defG},
by writing every $s\in \N$ as $s=mu+r$ for unique $m\in \N\cup
\{0\}$ and $r\in \{0,\ldots,u-1\}$, it follows from~\eqref{eq:use m
path} that
\begin{align}\label{eq:two term cesaro bound}
&\nonumber\sum_{i=1}^n\sum_{j=1}^n \pi_i\mathscr{B}_t(A)_{ij} d_X(x_i,x_j)^p\\\nonumber&=
\frac{1}{t}\sum_{i=1}^n\sum_{j=1}^n\sum_{m=0}^\infty\sum_{r=0}^{u-1}\left(1-\frac{1}{t}\right)^{mu+r}
\pi_i(A^{mu+r})_{ij}d_X(x_i,x_j)^p\\
&\nonumber\le \frac{4^p}{t}\left(\sum_{m=0}^\infty\sum_{r=0}^{u-1}
m^p\left(1-\frac{1}{t}\right)^{mu+r}\right)\sum_{i=1}^n\sum_{j=1}^n\pi_i\A_{u}(A)_{ij}d_X(x_i,x_j)^p\\
&\quad+ \frac{2^p}{t} \sum_{i=1}^n\sum_{j=1}^n \pi_i
\left(\sum_{m=0}^\infty\sum_{r=1}^{u}\left(1-\frac{1}{t}\right)^{mu+r}
(A^r)_{ij}\right)d_X(x_i,x_j)^p.
\end{align}

Recalling that $u=\lceil pt\rceil$, we have
\begin{align}
\nonumber \frac{1}{t}\sum_{m=0}^\infty\sum_{r=0}^{u-1}
m^p\left(1-\frac{1}{t}\right)^{mu+r}&\lesssim p\sum_{m=1}^\infty
\frac{m^p}{e^{pm}}\\&\le\label{eq:max of function} \frac{p}{e^p}+p\int_0^\infty \frac{x^p}{e^{px}}dx\\&=
\frac{p}{e^p}+\frac{\Gamma(p+1)}{p^p}
\lesssim \frac{p}{e^p} \label{eq:stirling},
\end{align}
where~\eqref{eq:max of function} uses the fact
that $x\mapsto x^pe^{-px}$ achieves its global maximum on
$[0,\infty)$ at $x=1$, and~\eqref{eq:stirling} uses Stirling's formula.

Next, for every $i,j\in \{1,\ldots,n\}$ we have
\begin{align}\label{eq:geometric sum}
\nonumber \frac{1}{t}\sum_{m=0}^\infty\sum_{r=1}^{u}\left(1-\frac{1}{t}\right)^{mu+r}
(A^r)_{ij}&\le \frac{u}{t} \left(\sum_{m=0}^\infty e^{-mu/t}\right)\A_u(A)_{ij}\\&\lesssim p \A_u(A)_{ij}.
\end{align}
\eqref{eq:desired p^p} now follows by
substituting~\eqref{eq:stirling} and~\eqref{eq:geometric sum}
into~\eqref{eq:two term cesaro bound}.
\end{proof}

\subsection*{Acknowledgements} We are grateful to the anonymous referees for their helpful suggestions. M.~M. was supported by ISF grants 221/07 and  93/11,
BSF grant 2010021, and NSF grant CCF-0832797. Part of this work was completed while M.~M. was a member of the Institute for Advanced Study at Princeton, NJ. A.~N.
was supported by NSF grant CCF-0832795, BSF grant 2010021, the Packard Foundation and the Simons Foundation. Part of this work was completed while A.~N. was visiting Universit\'e Pierre et Marie Curie, Paris, France.


\bibliographystyle{abbrv}
\bibliography{CAT0ext}

 \end{document}